\documentclass[12pt]{amsart}

%line spacing
\usepackage{setspace}
%commenting/verbatim
\usepackage{verbatim}

%math packages
\usepackage{amsfonts,fancyhdr,ifthen,bm}
\usepackage{amscd,amssymb} 
\usepackage[letterpaper,left=1.35in,right=1.35in,top=1.35in,bottom=1.35in]{geometry}
 
\usepackage{times}
\usepackage{graphicx}
\usepackage{color}
\usepackage{epsfig}
\usepackage{mathrsfs}
\usepackage{paralist}
\usepackage{comment}
\usepackage{mathtools}
\usepackage{multirow}
\usepackage{tikz-cd}
\usepackage[foot]{amsaddr}

\usepackage{enumitem}

\usepackage{appendix}

\usepackage{tikz,ifthen,calc}
\usetikzlibrary{automata,arrows,positioning,calc}

% For manuscript, doublespace and 1.5in margins
%\usepackage{setspace}
%\doublespacing
%\usepackage[lmargin=1.35in, rmargin = 1.35in]{geometry}

%allows page breaks inside math environments
%\allowdisplaybreaks

%Theorems, etc.
\newtheorem{thm}{Theorem}[section]

\newtheorem*{thm*}{Theorem}
\newtheorem{prop}[thm]{Proposition}
\newtheorem*{prop*}{Proposition}

\newtheorem*{cor*}{Corollary}

\newtheorem{lem}[thm]{Lemma}
\newtheorem*{lem*}{Lemma}

\newtheorem*{oquest*}{Open Question}

\theoremstyle{remark}
\newtheorem{rmk}[thm]{Remark}

\theoremstyle{remark}
\newtheorem*{rmk*}{Remark}

\theoremstyle{definition}
\newtheorem{defn}[thm]{Definition}

\theoremstyle{definition}
\newtheorem{notat}[thm]{Notation}

\theoremstyle{definition}
\newtheorem{ass}[thm]{Assumption}

\theoremstyle{definition}

\theoremstyle{definition}
\newtheorem*{defn*}{Definition}

\theoremstyle{definition}
\newtheorem{ex}[thm]{Example}

\theoremstyle{definition}

%Equation numbering
\numberwithin{equation}{section}%numbers equations by section

%definition font

%Macros

 %such that for sets
 %set difference

\DeclareMathOperator\coker{coker}

 % not sure if used
 % not sure if used
 % not sure if used
 \usepackage{chngcntr}
\counterwithin{figure}{section}

%Common sets

\newcommand{\Z}{\mathbb{Z}}
\newcommand{\QQ}{\mathbb{Q}}

\newcommand{\msW}{\mathscr{W}}

\newcommand{\mcH}{\mathcal{H}}

\DeclareMathOperator{\FFF}{\mathbb{F}}

\newcommand{\Sel}{\textup{Sel}}
\newcommand{\Gal}{\textup{Gal}}

% Specific to 2Selm

\newcommand{\Hom}{\textup{Hom}}

\newcommand{\isoarrow}{\xrightarrow{\,\,\,\sim\,\,\,}}

\newcommand{\res}{\textup{res}}

\newcommand{\mcalH}{\mathcal{H}}

\newcommand{\CTP}{\textup{CTP}}

\newcommand{\inv}{\textup{inv}}

%Sha misery
\DeclareFontFamily{U}{wncy}{}
\DeclareFontShape{U}{wncy}{m}{n}{<->wncyr10}{}
\DeclareSymbolFont{mcy}{U}{wncy}{m}{n}
\DeclareMathSymbol{\Sha}{\mathord}{mcy}{"58}

%Common vectors

\newcommand\restr[2]{{% we make the whole thing an ordinary symbol
  \left.\kern-\nulldelimiterspace % automatically resize the bar with \right
  #1 % the function
  \vphantom{\big|} % pretend it's a little taller at normal size
  \right|_{#2} % this is the delimiter
  }}

%Math operators

\newcommand{\SMod}{\textup{SMod}}

 %previously \mathcal{O}_{F, S}^{\times}
 %previously \mathcal{O}_{K,S_0}^{\times}

\newcommand{\Fstar}{(\Fsep)^{\times}} %previously \overline{F}^{\times}
 %previously \overline{F_v}^{\times}
\newcommand{\Fbar}{F^s} %previously \overline{F}
\newcommand{\Fsep}{F^s} %previously F^s
\newcommand{\Fvsep}{F_v^s} %previously F_v^s
 %previously \overline{F_v}

\newcommand{\resprod}{\sideset{}{'}\prod}
\newcommand{\tresprod}[1]{\sideset{}{'} {\textstyle \prod_{#1}}}
\newcommand{\CTPb}[1]{\textup{CTP}^{\textup{bis}}_{#1}}

\usepackage{scalerel}
% These look okay with resprod
\newcommand{\llop}{\scaleto{\bigg(}{33pt}}
\newcommand{\rlop}{\scaleto{\bigg)}{33pt}}

\author{Adam Morgan}
\email{adam.morgan@glasgow.ac.uk}
\author{Alexander Smith}
\email{asmith13@stanford.edu}
 
\begin{document}
\title{The Cassels--Tate pairing for finite Galois modules}
 
\date{\today}
\subjclass[2020]{11R34 (11G10, 11R37)}

\keywords{Arithmetic duality, Cassels--Tate pairing,   global field, Selmer group, theta group}
\maketitle

\begin{abstract}
Given a global field $F$ with absolute Galois group $G_F$, we define a category $\SMod_F$ whose objects are finite $G_F$-modules decorated with local conditions. We define this category so that `taking the Selmer group' gives a functor $\Sel$ from $\SMod_F$ to $\text{Ab}$. After defining a duality functor $\vee$ on $\SMod_F$, we show that every short exact sequence $E = \left[0 \to M_1 \to M \to M_2 \to 0\right]$ in $\SMod_F$  
 gives rise to a natural bilinear pairing
\[\CTP_E\colon \Sel\, M_2 \times \Sel\, M_1^{\vee} \to \QQ/\Z\]
whose left and right kernels are the images of $\Sel\, M$ and $\Sel\, M^{\vee}$, respectively. This  generalizes the Cassels--Tate pairing defined on the Shafarevich--Tate group of an abelian variety over $F$ and results in a flexible theory in which pairings associated to different exact sequences can be readily compared to one another. As an application, we give a new proof of Poonen and Stoll's results concerning the failure of the Cassels--Tate pairing to be alternating for  principally polarized abelian varieties and extend this work to the setting of Bloch--Kato Selmer groups.
\end{abstract}

\setcounter{tocdepth}{1}

\tableofcontents

\section{Introduction} 
Given an abelian variety $A$ defined over a global field $F$, one of the most important tools number theorists have for understanding the group of rational points $A(F)$ is the exact sequence
\begin{equation}
\label{eq:Sel_exact}
0 \rightarrow A(F)/mA(F) \to \Sel^m A \to \Sha(A)[m] \to 0,
\end{equation}
which is defined for every positive integer $m$. In this sequence, $\Sel^m A$ denotes the $m$-Selmer group of $A$ over $F$, which is effectively computable, and $\Sha(A)$ denotes the Shafarevich--Tate group of $A$ over $F$, which is more mysterious. In particular, the Shafarevich--Tate conjecture, which predicts that $\Sha(A)$ is always a finite group, is still vastly open. Taking $\Sha(A)_{\text{div}}$ to be the set of divisible elements in the Shafarevich--Tate group, one simple consequence of the Shafarevich--Tate conjecture would be
\[\Sha(A)_{\text{div}} = 0,\]
but this too remains out of reach.

The Cassels--Tate pairing is a canonical nondegenerate bilinear pairing
\begin{equation}
\label{eq:OGCT}
\Sha(A)\big/\Sha(A)_{\text{div}} \,\times\, \,\Sha(A^{\vee})\big/\Sha(A^{\vee})_{\text{div}}\,  \rightarrow \QQ/\Z,
\end{equation}
where $A^{\vee}$ denotes the dual abelian variety to $A$. This pairing was first defined by Cassels for elliptic curves over number fields \cite{Cass62}, where it was observed to be an alternating pairing under the natural identification of $A$ with $A^{\vee}$. The pairing was then extended by Tate to the case of abelian varieties over number fields using his newly developed theory of duality in Galois cohomology \cite{Tate63}. 
In the case where $F$ has characteristic $p > 0$, Tate instead defined a nondegenerate pairing
\begin{equation}
\label{eq:OGCT_FF}
\Sha(A)_{p-\text{div}}\big/\Sha(A)_{\text{div}} \,\times\,\, \Sha(A^{\vee})_{p-\text{div}}\big/\Sha(A^{\vee})_{\text{div}}\,  \rightarrow \QQ/\Z,
\end{equation}
where $\Sha(A)_{p-\text{div}}$ denotes the $p$-divisible elements of $\Sha(A)$. Tate's definition is referred to as the Weil pairing definition of the Cassels--Tate pairing \cite{Poon99}, and was later generalized to the Shafarevich--Tate groups of divisible Galois modules over number fields by Flach \cite{Flach90}. The full pairing \eqref{eq:OGCT} over function fields was defined later by generalizing the homogeneous space approach of Cassels, and many different definitions of the pairing are now known \cite{Poon99}. If $A$ is identified with $A^{\vee}$ by a principal polarization defined over $F$, this pairing is known to be antisymmetric but, by the work of Poonen and Stoll \cite{Poon99}, not necessarily alternating.

The Cassels--Tate pairing can be reinterpreted as a pairing for Selmer groups. Given positive integers $m$ and $n$ indivisible by the characteristic of $F$, we can consider the restriction of the Cassels--Tate pairing to $\Sha(A)[n] \times \Sha(A^{\vee})[m]$. Composing with the surjection of \eqref{eq:Sel_exact}, this gives a canonical pairing
\begin{equation}
\label{eq:OGCT_Sel}
\Sel^{n} A \,\times\, \Sel^{m}A^{\vee} \rightarrow \QQ/\Z.
\end{equation}
We can define homomorphisms
\[\Sel^{mn} \,A \xrightarrow{\,m\,} \Sel^{n}\, A\quad\text{and}\quad\Sel^{mn} A^{\vee} \xrightarrow{\,n\,} \Sel^{m} A^{\vee}\]
functorially from the homomorphisms
\[A[mn] \xrightarrow{\,\, \cdot \,m\,\,} A[n]\quad\text{and}\quad A^{\vee}[mn] \xrightarrow{\,\, \cdot\,n\,\,} A^{\vee}[m].\]
The images of the $mn$-Selmer groups under these maps are killed by \eqref{eq:OGCT_Sel}, and we are left with the perfect pairing
\begin{equation}
\label{eq:OGCT_Sel_2}
\Sel^{n}A \big/ \big(m\Sel^{nm}A\big) \,\times \,\Sel^{m}A^{\vee} \big/ \big(n\Sel^{mn}A^{\vee}\big)\,\to\QQ/\Z.
\end{equation}
Interpreted this way, the Cassels--Tate pairing measures the obstruction to lifting a Selmer element to a higher Selmer group. The original pairing \eqref{eq:OGCT} (or \eqref{eq:OGCT_FF} in the function field case) can be recovered by taking the limit of the pairing \eqref{eq:OGCT_Sel_2} over  tuples $(m, n)$.

Compared to the full pairing \eqref{eq:OGCT}, the pairing \eqref{eq:OGCT_Sel_2} has the advantage of being computable just from the Galois structure of the exact sequence
\begin{equation}
\label{eq:Ator_es}
0 \to A[m] \to A[mn] \to A[n] \to 0
\end{equation}
together with the form of the local conditions on $A[mn]$ (see  Example \ref{ex:classical}).  The starting observation of this paper  is that an analogue of the pairing \eqref{eq:OGCT_Sel_2} can be defined for any short exact sequence of finite Galois modules decorated with local conditions, provided the Galois modules have order indivisible by the characteristic of $F$. This results in a flexible theory in which the pairings associated to different exact sequences can be compared  readily to one another.

A major motivation for our study is the second author's work on the distribution of Selmer groups in twist families \cite{Smi22}. In particular, Theorem \ref{thm:CTP_intro} has a central role in that work, and the generalization of Poonen and Stoll's result to Bloch--Kato Selmer groups expands the scope of the main result of that paper.

Our study of the Cassels--Tate pairing begins with the following categorical framework for studying Selmer groups.
\subsection{Selmer groups and the category $\SMod_F$ }
\label{ssec:SModF}

Take $F$ to be a global field with separable closure $F^s$ and absolute Galois group $G_F$. For each place $v$ of $F$, take $G_v$ to be the decomposition group at $v$ considered as a subgroup of $G_F$.

\begin{defn} \label{def:smod_cat}
We define the additive category $\SMod_F$ of ``Selmerable modules'' as follows:
\begin{itemize}
\item The objects of $\SMod_F$ are tuples $(M, \msW)$, where $M$ is a finite discrete $G_F$-module whose order is indivisible by the characteristic of $F$, and where $\mathscr{W}$ is a compact open subgroup of the restricted product
\begin{equation}
\label{eq:restricted_product_intro}
 \resprod_v H^1(G_v, M)
\end{equation}
taken with respect to the subgroups $H^1_{\text{ur}}(G_v, M)$ of unramified classes.  We refer to $\msW$ as the local conditions subgroup decorating $M$.
\item A morphism $f \colon (M, \msW) \to (M', \msW')$ in $\SMod_F$ is a $G_F$-equivariant homomorphism $f \colon M \to M'$ satisfying
$f(\msW) \subseteq \msW'.$
\end{itemize}
\end{defn}
The Selmer group associated to an object $(M,\mathscr{W})$ in $\SMod_F$ is defined by
\[\Sel(M,\,\msW) \,=\, \ker\llop H^1(G_F, M) \xrightarrow{\quad} \,\, \resprod_{v\text{ of } F} H^1(G_v, M)  \Big/\msW\rlop.\]
The morphisms in $\SMod_F$ have been defined so that the assignment $M\mapsto \textup{Sel} ~M$ defines a functor from $\SMod_F$ to the category of finite abelian groups. 
 
 \begin{rmk}
 A subgroup $\msW$ of the restricted product \eqref{eq:restricted_product_intro} is compact open if and only if it has the form
\[ \mathscr{W}_S \times \prod_{v \not \in S} H^1_{\text{ur}}(G_v, M)\]
for some finite set $S$  of places of $F$ and some  subgroup $\mathscr{W}_S$ of $\prod_{v \in S} H^1(G_v, M)$.  
In almost all applications,  $\msW$ can be written as a restricted product $\tresprod{v} \msW_v$. 
 For the theory we develop, there seems to be no natural reason to restrict to this case. 
 
We remark that the condition that $\mathscr{W}$ be compact open can be relaxed; see Section \ref{sec:general_local_conds} for more details.
\end{rmk}

Before defining our version of the Cassels--Tate pairing we need two additional notions: that of the dual of an object of $\SMod_F$, and that of a short exact sequence in $\SMod_F$. For the first, given   $(M,\mathscr{W})$ in  $\SMod_F$,  we take $M^{\vee}$ to be the $G_F$-module 
\[M^{\vee} =  \Hom(M,\, (F^s)^{\times}).\]
We then take $\mathscr{W}^\perp$ to be the orthogonal complement of $\mathscr{W}$ under the  pairing
\begin{equation} \label{eq:restricted_product_pairing}
 \resprod_v H^1(G_v, M)\, \times\,  \resprod_v H^1(G_v, M^\vee) \to \QQ/\Z
\end{equation}
given on components by local Tate duality; see Definition \ref{defn:loc_pair} for details. This is a local conditions subgroup for $M^\vee$, and the map 
sending $(M, \,\msW)$ to $ \left(M^{\vee}, \,\msW^{\perp}\right)$  extends to a contravariant functor $\vee$ on $\SMod_F$.

Next, given a sequence \begin{equation}
\label{eq:example_exact_intro}
E = \left[0 \to (M_1,\,\msW_1) \xrightarrow{\,\,\iota\,\,} (M,\, \msW) \xrightarrow{\,\,\pi\,\,} (M_2,\, \msW_2) \to 0\right]
\end{equation}
in $\SMod_F$, we call $E$  \emph{exact} if the underlying sequence of $G_F$-modules is exact, and if we have 
\begin{equation*}
\label{eq:local_exact}
\iota^{-1}(\msW) = \msW_1\quad\text{and}\quad \pi(\msW) = \msW_2.
\end{equation*}
Applying the contravariant functor $\vee$ to $E$ gives another sequence
\begin{equation}
\label{eq:dual_seq_input}
E^{\vee} = \left[0 \to \left(M_2^{\vee},\,\msW_2^{\perp}\right) \xrightarrow{\,\,\pi^{\vee}\,\,}\left(M^{\vee},\,\msW^{\perp}\right)  \xrightarrow{\,\,\iota^{\vee}\,\,} \left(M_1^{\vee},\,\msW_1^{\perp}\right) \to 0\right]
\end{equation}
in $\SMod_F$, which is exact if and only if $E$ is exact.

 By  a morphism $f:E \rightarrow E'$ between exact sequences, we mean a commutative diagram  
\begin{equation}
\label{eq:nat_setup}
\begin{tikzcd}
E\,=\,\big[\,0 \arrow{r} & M_{1}\arrow{d}{f_1} \arrow{r}{\iota}& M \arrow{d}{ f}    \arrow{r}{\pi}& M_{2} \arrow{d}{ f_2}    \arrow{r} & 0\,\big] \\
E'\,=\,\big[\,0 \arrow{r} & M_{1}' \arrow{r}{\iota'} & M' \arrow{r}{\pi'} & M_{2}' \arrow{r} & 0\,\big]
\end{tikzcd}
\end{equation}
 in $\SMod_F$. 
 
 \subsection{The pairing and its main properties}
 With the above setup in hand, we can now describe our version of the Cassels--Tate pairing and state its key properties.
 
\begin{thm}\label{thm:CTP_intro}
Take an exact sequence $E$ in $\SMod_F$ as in \eqref{eq:example_exact_intro}.  There is then a natural bilinear pairing  
\begin{equation} \label{cassels_tate_pairing_intro}
\CTP_E\colon \Sel\, M_2\, \times\, \Sel\, M_1^{\vee}\to \QQ/\Z, 
\end{equation}
with left kernel $\pi(\Sel\, M)$ and right kernel  $\iota^{\vee}(\Sel\, M^{\vee})$. 

Moreover:

\begin{itemize}
\item[(i)] for all $\phi$ in $\Sel \,M_2$ and all $\psi$ in $\Sel\,M_1^{\vee}$,  we have the duality identity
\begin{equation*}
\label{eq:intro_sym}
\CTP_E(\phi, \psi) = \CTP_{E^{\vee}}(\psi, \beta(\phi)),
\end{equation*}
where $\beta$ is the evaluation isomorphism from $(M_2^{\vee})^{\vee}$ to $M_2$.
\item[(ii)] given a morphism of exact sequences   $f:E\rightarrow E'$ in $\SMod_F$ as depicted in \eqref{eq:nat_setup},  we have the naturality property
\begin{equation*}
\label{eq:natural}
\textup{CTP}_{E } (\phi, \,{f_1^{\vee}}(\psi) ) \,=\, \textup{CTP}_{E'}(f_{2} (\phi), \psi),
\end{equation*}
for all $\phi$ in $\Sel\, M_{2 }$ and $\psi$ in $\Sel\, (M_{1}')^{\vee}$.
\end{itemize}
\end{thm}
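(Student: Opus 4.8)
The plan is to construct $\CTP_E$ by an explicit cocycle recipe generalizing Cassels's lifting construction, and then to read off every stated property from that recipe together with local Tate duality and the Poitou--Tate reciprocity law $\sum_v \inv_v = 0$.

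First I would build the pairing. Fix $\phi \in \Sel M_2$ and $\psi \in \Sel M_1^\vee$, represented by cocycles $a \in Z^1(G_F, M_2)$ and $b \in Z^1(G_F, M_1^\vee)$. Choose a global cochain lift $g \in C^1(G_F, M)$ with $\pi \circ g = a$; then $\epsilon := \delta g$ is valued in $M_1$ and represents the connecting-map image $\partial\phi \in H^2(G_F, M_1)$. The Selmer condition on $\phi$, combined with the exactness hypothesis $\pi(\msW) = \msW_2$, forces $\res_v(\partial\phi) = 0$ for every $v$, so $\partial\phi \in \Sha^2(G_F, M_1)$: choosing $w = (w_v) \in \msW$ with $\pi(w) = \loc(\phi)$ and cocycle representatives $c_v$ of $w_v$ adjusted so that $\pi c_v = \res_v a$, the difference $\eta_v := \res_v g - c_v$ is a local $1$-cochain valued in $M_1$ with $\delta\eta_v = \res_v\epsilon$. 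Since $\res_v(\partial\phi \cup \psi) = \res_v\partial\phi \cup \res_v\psi = 0$ for all $v$, the class $\epsilon \cup b$ lies in $\Sha^3(G_F, \Gm)$, which vanishes, so we may write $\epsilon \cup b = \delta\Xi$ globally, and I set
\[
\CTP_E(\phi,\psi) := \sum_v \inv_v\big(\res_v\Xi - \eta_v \cup \res_v b\big),
\]
each summand being a genuine local $2$-cocycle and almost all being unramified, hence zero.

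Next I would verify that this is well defined and bilinear. Bilinearity is immediate, since all choices can be made additively and cup product is bilinear. Altering $a$, $g$, $b$, or $\Xi$ by coboundaries changes the sum by $\sum_v\inv_v$ of the restriction of a global class, which vanishes by reciprocity. The one delicate point is the freedom in $w$, equivalently in the local primitives $\eta_v$: two choices differ by an element of $\ker(\pi\colon \msW \to \msW_2)$, which by local exactness together with the identity $\iota^{-1}(\msW) = \msW_1$ equals $\iota(\msW_1)$. Such a change alters the sum by $\langle z, \loc(\psi)\rangle$ for some $z \in \msW_1$, and this vanishes precisely because $\loc(\psi) \in \msW_1^\perp$; here the Selmer condition on $\psi$ and the definition of $\msW^\perp$ are exactly what render the local invariants summable to a well-defined element of $\QQ/\Z$. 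Properties (i) and (ii) are then formal. For the duality identity (i) I would run the same recipe for $E^\vee$ and compare: graded-commutativity of the cup product and the symmetry of local Tate duality interchange $\eta_v \cup \res_v b$ with its $E^\vee$-counterpart up to a global coboundary, which contributes nothing after summing. For the naturality (ii), a morphism $f\colon E \to E'$ transports a lift $g$ of $\phi$ to a lift of $f_2(\phi)$ and carries $\epsilon$ and $\eta_v$ forward via $f_1$; compatibility of local duality with the adjoint pair $(f_1, f_1^\vee)$ matches the two cocycle formulas term by term.

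The main work, and the step I expect to be the real obstacle, is the identification of the kernels. That $\pi(\Sel M)$ and $\iota^\vee(\Sel M^\vee)$ lie in the left and right kernels is easy: if $\phi = \pi(\tilde\phi)$ with $\tilde\phi \in \Sel M$, take $g$ to be a genuine cocycle representing $\tilde\phi$, so $\epsilon = 0$ and the $\eta_v$ may be taken to vanish, killing the pairing against any $\psi$. The reverse containments are genuinely global: an element $\phi$ of the left kernel has $\partial\phi \in \Sha^2(G_F, M_1)$ annihilating all of $\Sel M_1^\vee$ under the local pairing above, and one must deduce that $\phi$ actually lifts to a class in $\Sel M$. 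I would obtain this from an exact sequence of Poitou--Tate type in $\SMod_F$: the obstruction to lifting $\phi$ through $\pi$ within the Selmer formalism lives in a group dual to $\Sel M_1^\vee$, and perfectness of global duality forces it to vanish once $\CTP_E(\phi,-)$ does. Establishing the relevant nine-term (Greenberg--Wiles style) exact sequence and checking its compatibility with the cocycle pairing is the crux; the duality identity (i) then reduces the right-kernel statement to the left-kernel one applied to $E^\vee$.
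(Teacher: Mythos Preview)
Your construction of $\CTP_E$ and your treatment of well-definedness, bilinearity, naturality, and the left kernel closely match the paper's approach: the paper gives exactly this cocycle recipe (Definition 3.3), checks well-definedness by running through the choices (Proposition 3.4), and computes the left kernel by a two-step Poitou--Tate argument, first pairing $\Sha^2(M_1)$ against $\Sha^1(M_1^\vee)$ and then using a snake-lemma obstruction living in $\tresprod{v}H^1(G_v,M_1)\big/(\msW_1+\msW_{\text{global}})$ (Proposition 3.6). Your plan to deduce the right kernel from the duality identity applied to $E^\vee$ is also what the paper does.

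The real gap is in your argument for the duality identity (i). You describe it as ``formal'' from graded commutativity of cup product and symmetry of local Tate duality, but this is not enough: in $\CTP_E$ you lift $\phi$ through $M\to M_2$ and cup a cochain valued in $M_1$ against $\psi$, whereas in $\CTP_{E^\vee}$ you lift $\psi$ through $M^\vee\to M_1^\vee$ and cup a cochain valued in $M_2^\vee$ against $\phi$. These are cup products in different modules arising from lifts along different maps, and no simple graded-commutativity identity relates them. The paper isolates this as ``the more interesting portion of this theorem'' and proves it via a genuine cochain argument: it introduces a second recipe $\CTPb{E}$ in which one lifts $\psi$ rather than $\phi$, proves $\CTPb{E}=\CTP_E$ by choosing the global trivializing cochain to be $\eta + f\cup_M g$ (where $f,g$ are the two lifts) and checking the local invariants match (Lemma 3.10), and then proves $\CTPb{E}(\phi,\psi)=\CTP_{E^\vee}(\psi,\beta(\phi))$ by exhibiting an explicit cochain $h(\sigma,\tau)=\langle\overline{\phi}(\sigma\tau),\,(\pi^\vee)^{-1}(dg)(\sigma,\tau)\rangle$ whose coboundary mediates between the two global $3$-cocycles, together with a local $1$-cochain $a_v$ that cobounds the discrepancy of the local terms. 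Without something of this shape, your outline for (i) does not go through.
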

We will prove this theorem in Section \ref{sec:def_and_sym}.

\begin{ex}
\label{ex:classical}
Take $A$ to be an abelian variety defined over $F$. Given a positive integer $m$ indivisible by the characteristic of $F$ and a place $v$ of $F$, take
\[\msW_{m, v} = \ker\Big(H^1(G_v, A[m]) \to H^1(G_v, A)\Big).\]
Then the product $\msW_{m} = \prod_{v} \msW_{m, v}$ is a compact open subgroup of $\tresprod{v} H^1(G_v, A[m])$ by \cite[Proposition 4.13]{PR12}.

The $m$-Selmer group of $A$ over $F$ is simply $\Sel\left(A[m], \,\msW_{m}\right)$. Furthermore, given a second positive integer $n$   indivisible by the characteristic of $F$, the $n$-Selmer group of the dual abelian variety $A^{\vee}$ is equal to
\[\Sel (A[n]^{\vee},\,\msW_n^{\perp}),\]
and the pairing associated to the exact sequence
\[0 \to (A[n], \,\msW_n) \to (A[mn],\, \msW_{mn}) \to (A[m],\, \msW_m) \to 0\]
recovers the pairing \eqref{eq:OGCT_Sel_2}. In this way, the construction considered in Theorem \ref{thm:CTP_intro} generalizes the pairing considered by Cassels and Tate.

See Section \ref{ssec:isogeny} for a generalization of this example to isogeny Selmer groups.
\end{ex}

Our definition of the pairing $\CTP_E$ in Theorem \ref{thm:CTP_intro} is a straightforward generalization of the Weil pairing definition of the Cassels--Tate pairing for abelian varieties given in \cite[I.6.9]{Milne86}. Calculating the left kernel of $\CTP_E$ also requires little adjustment from \cite{Milne86}.
 The duality identity \eqref{eq:intro_sym} is the more interesting portion of this theorem. It may be thought of as a generalization of the fact that the classical Cassels--Tate pairing is antisymmetric  for principally polarized abelian varieties,
 with the prefix `anti-' explained by the fact  that the Weil pairing is alternating. While this antisymmetry result has been extended to more general Galois modules with alternating structure \cite{Flach90}, the duality identity \eqref{eq:intro_sym} seems to have gone unobserved for other kinds of modules. 

The naturality property is also fundamental to our theory. The property is a simple generalization of an analogous result for the Cassels--Tate pairing of isogenous abelian varieties \cite[Remark 6.10]{Milne86}, but the broader scope provided by $\SMod_F$ makes it a more powerful statement in the context of our work. In particular, applications of naturality can make use of the fact that $\SMod_F$ is quasi-abelian, a fact we prove in Section \ref{ssec:qab}. This means that an exact sequence $E$ as in \eqref{eq:example_exact_intro} may be pulled back along a morphism $N_2 \to M_2$ or pushed out along a morphism $M_1 \to N_1$ to give another exact sequence. As a consequence, the usual method of defining the Baer sum of extension classes works; given  extensions $E_a$ and $E_b$ of $(M_2,\mathscr{W}_2)$ by $(M_1,\mathscr{W}_1)$ in  $\SMod_F$, there is a natural choice $E_a + E_b$ for the sum of these extensions. Per Proposition \ref{prop:Baer}, naturality then implies the trilinearity relation
\begin{equation}
\label{eq:intro_baer}
\CTP_{E_a + E_b}\, =\, \CTP_{E_a}\,+\, \CTP_{E_b}.
\end{equation}
Since the sequence  $E_a+E_b$ can be simpler than both $E_a$ and $E_b$, this provides a useful method for computing the sum of the two pairings.  Special cases of this identity,  proved by direct cocycle calculations, are recognizable in \cite{MR597871, MR3951582} where they are exploited to study the variation under quadratic twist of the Cassels--Tate pairing on the $2$-Selmer group of an abelian variety. A variant of this construction also appears in \cite[Section 8]{Smi22}.

Additional applications of the general theory, in particular to the study of class groups, are given in \cite{MoSm21c}. In that work one can also find a study of how the pairing \eqref{cassels_tate_pairing_intro} behaves when the global field $F$ is changed.

\subsection{The Poonen--Stoll class for modules with alternating structure}
\label{ssec:theta_intro}
In \cite{Flach90}, Flach generalized the Cassels--Tate pairing to the Shafarevich--Tate groups of certain divisible Galois modules, such as the ones appearing in the study of Bloch--Kato Selmer groups \cite{BlKa07}. For modules with alternating structure, he found that the Cassels--Tate pairing was antisymmetric (i.e. skew-symmetric). In particular, his result implies that the Cassels--Tate pairing
\[\CTP \colon \Sha(A) \times \Sha(A) \to \QQ/\Z\]
associated to a principally polarized abelian variety $A$ over a number field is antisymmetric.

Poonen and Stoll were the first to discover that the pairing corresponding to a principally polarized abelian variety of dimension at least $2$ is not necessarily alternating  \cite{Poon99}. As part of this work, they explicitly constructed a class  $\psi_{\text{PS}}$  in $(\Sha(A)/\Sha(A)_{\text{div}})[2]$ satisfying
\[\CTP(\phi, \phi) = \CTP(\phi, \psi_{\text{PS}}) \quad\text{for all }\, \phi \in \Sha(A),\]
which we refer to as the  \emph{Poonen--Stoll class}.
For divisible modules with alternating structure, Poonen and Stoll observed that the existence of an analogue to $\psi_{\text{PS}}$ is a formal consequence of Flach's result,  and stated that it would be desirable to give a similarly explicit construction of this class \cite[Section 5]{Poon99}. Since the work of Poonen--Stoll is of a geometric nature, it is not obvious how to adapt their arguments to Flach's setting.  We overcome this difficulty using the machinery developed in this paper, giving a new and almost entirely algebraic proof of Poonen--Stoll's result in the process.

%For divisible modules with alternating structure, Poonen and Stoll observed that Flach's result implied that the analogue to $\psi_{\text{PS}}$ existed but stated that it would be preferable to explicitly construct this class \cite[Section 5]{Poon99}. One of our main results is an explicit construction for this class, which we call the \emph{Poonen--Stoll class}. 

To describe the result, we begin by recalling what we need of Flach's work \cite{Flach90}.
\begin{notat}
\label{notat:TV}
Choose a global field $F$ of characteristic not equal to $2$. Choose a free finite rank $\mathbb{Z}_2$-module $T$ equipped with a continuous action of $G_F$ that is  unramified outside a finite set of places. Write $V=T\otimes_{\mathbb{Z}_2}\mathbb{Q}_2$, and suppose we are given a non-degenerate continuous  $G_F$-equivariant alternating pairing 
\begin{equation} \label{eq:pairing_on_V_flach}
\lambda: V\times V \longrightarrow (F^s)^\times,
\end{equation}
under which $T$ is identified with its orthogonal complement. Suppose also that for each place $v$ of $F$ we are given a $\mathbb{Q}_2$-vector space 
\[\mathscr{W}_v\subseteq H^1(G_v,V),\]
which is its own orthogonal complement under the local duality pairing associated to \eqref{eq:pairing_on_V_flach}, and which is equal to  the subgroup of unramified classes in $H^1(G_v,V)$ for all but finitely many places $v$. Take $\msW = \prod_v \msW_v$.
\end{notat}

From this data, Flach defines a Shafarevich--Tate group $\Sha(T,\mathscr{W})$ as a certain subquotient of $H^1(G_F,V/T)$, and defines a Cassels--Tate pairing 
\begin{equation} \label{eq:CT_flach}
\textup{CTP}_\lambda: \Sha(T,\mathscr{W})\times \Sha(T,\mathscr{W})\longrightarrow \mathbb{Q} /\mathbb{Z}.
\end{equation}
In Section \ref{ssec:infinite}, we generalize Flach's definition of the Cassels--Tate pairing, recovering Flach's result that \eqref{eq:CT_flach} is non-degenerate and antisymmetric.

We now give our explicit description of the associated Poonen--Stoll class. 
\begin{defn}
\label{defn:PS_intro}
The pairing \eqref{eq:pairing_on_V_flach} induces a non-degenerate $G_F$-equivariant alternating pairing 
\[P_\lambda:T/2T \times T/2T \longrightarrow \mu_2,\] 
defined by
\[P_\lambda(x,y)=\lambda(x\otimes \tfrac{1}{2}, \,y).\]
Take $U$ to be the $G_F$-set $ (\Fsep)^{\times} \times T/2T$, and define a group operation on $U$ by
\[(\alpha, x) \cdot (\alpha', x') \,=\,  \big(\alpha \alpha' P_\lambda(x, x'),\, x +x'\big).\]
This makes $U$ into a $G_F$-module. We have an exact sequence of $G_F$-modules
\begin{equation}
\label{eq:PS_exact_Flach_seq}
0 \to (F^s)^{\times} \to U \to T/2T \to 0,
\end{equation}
where the maps to and from $U$ are the obvious inclusion and projection. The $G_F$-module extension class of this exact sequence corresponds to an element of $H^1(G_F, (T/2T)^{\vee})$, which we map into $H^1(G_F,V/T)$ via the composition
\[H^1(G_F,(T/2T)^\vee) \cong H^1(G_F,T/2T) \longrightarrow H^1(G_F,V/T).\] 
Here the first map is induced by $P_\lambda$, and the second induced by the map sending $x$ in $T/2T$  to $x\otimes \tfrac{1}{2} \textup{  mod }T $. Denote by $c_\lambda$ the class in $H^1(G_F,V/T)$ constructed this way. 
\end{defn}

\begin{thm} \label{thm:poonen-stoll_into}
The element $c_\lambda$ maps into $\Sha(T,\mathscr{W})[2]$, and we have 
\[\CTP_\lambda(\phi, \phi) = \CTP_\lambda(\phi, c_\lambda)\quad\text{for all }\, \phi \in \Sha(T, \msW).\]
In particular,  $\CTP_\lambda$ is alternating if and only if $c_\lambda$ maps to $0$ in $ \Sha(T,\mathscr{W})$.
\end{thm}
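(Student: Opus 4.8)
The plan is to deduce the theorem from the general formalism of Section~\ref{sec:def_and_sym} by reducing to the $2$-torsion, passing to a finite exact sequence, and then recognizing the theta group $U$ as the obstruction that the machinery of Theorem~\ref{thm:CTP_intro} packages automatically. Every $\phi \in \Sha(T,\msW)[2]$ is represented by a class $\bar\phi \in H^1(G_F, T/2T)$ via $\tfrac12 T/T \cong T/2T$, and the generalization of Flach's construction in Section~\ref{ssec:infinite} should identify the restriction of $\CTP_\lambda$ to $\Sha(T,\msW)[2]$ with the finite-level pairing $\CTP_E$ attached to
\[
E \,=\, \big[\,0 \to (T/2T, \msW') \xrightarrow{\ \cdot 2\ } (T/4T, \msW'') \to (T/2T, \msW') \to 0\,\big]
\]
in $\SMod_F$, with local conditions induced from $\msW$ as in Example~\ref{ex:classical}. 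Under the identification $(T/2T)^{\vee} \cong T/2T$ furnished by the nondegenerate $P_\lambda$, the second variable of $\CTP_E$ becomes a copy of $\Sel(T/2T)$. Unwinding Definition~\ref{defn:PS_intro}, the class $c_\lambda$ is the image of $P_\lambda^{-1}(e_U)$, where $e_U \in H^1(G_F,(T/2T)^{\vee})$ is the $G_F$-module extension class of \eqref{eq:PS_exact_Flach_seq}; hence $\CTP_\lambda(\phi, c_\lambda) = \CTP_E(\bar\phi, e_U)$, while $\CTP_\lambda(\phi,\phi) = \CTP_E(\bar\phi, P_\lambda\bar\phi)$. So the substance of the theorem is the single identity $\CTP_E(\bar\phi, P_\lambda\bar\phi) = \CTP_E(\bar\phi, e_U)$ for all $\bar\phi \in \Sel(T/2T)$.

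Next I would record the formal half of the argument. Since Section~\ref{ssec:infinite} recovers that $\CTP_\lambda$ is antisymmetric and nondegenerate (antisymmetry being itself an instance of the duality identity Theorem~\ref{thm:CTP_intro}(i) applied to the $\lambda$-induced self-duality of $E$), the self-pairing $Q(\phi) = \CTP_\lambda(\phi,\phi)$ is additive: expanding $\CTP_\lambda(\phi+\psi,\phi+\psi)$ and using $\CTP_\lambda(\psi,\phi) = -\CTP_\lambda(\phi,\psi)$ collapses the cross terms, so $Q$ is a homomorphism $\Sha(T,\msW) \to \tfrac12\Z/\Z$. By nondegeneracy it is represented by a unique class $c \in \Sha(T,\msW)[2]$ with $\CTP_\lambda(\phi,\phi) = \CTP_\lambda(\phi, c)$; this is exactly the formal existence statement Poonen and Stoll extract from Flach. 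All the content therefore lies in the explicit identification $c = c_\lambda$.

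For that identification I would exploit the fact that $\lambda$ makes $E$ self-dual only up to the theta twist. The two outer terms are interchanged by $P_\lambda$ with their duals, and the middle $T/4T$ is identified with $(T/4T)^{\vee}$ by $x \mapsto \lambda(x\otimes\tfrac14,-)$; the failure of this middle identification to be compatible with the outer ones is precisely the failure of $P_\lambda$ to admit a $G_F$-equivariant quadratic refinement, which is measured by $e_U$. Concretely, a set-theoretic splitting of $U$ is the same datum as a quadratic refinement of $P_\lambda$, and its $G_F$-coboundary represents $e_U$. I would then feed this into the duality identity (i), which relates $\CTP_E$ to $\CTP_{E^{\vee}}$ through the evaluation isomorphism $\beta$, transport $E^{\vee}$ back to $E$ along the polarization by naturality (ii), and observe that the discrepancy between $\beta$ and the polarization identification is translation by $e_U$. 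The trilinearity relation \eqref{eq:intro_baer} then converts the antisymmetry defect into the linear pairing $\CTP_E(\bar\phi, e_U)$, yielding $c = c_\lambda$. The hard part will be making this third step precise: showing, in purely cohomological terms, that the theta extension $U$ records the antisymmetry defect of the self-pairing, i.e.\ that the difference between $\beta$ and the $\lambda$-induced self-duality of $E$ is exactly translation by the class of $U$. This is the place where Poonen and Stoll argue geometrically, and it is where the Baer-sum formalism of $\SMod_F$ does the real work of replacing their computation.

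Finally I would verify $c_\lambda \in \Sha(T,\msW)[2]$ independently of the pairing identity, so that the statement is well posed. That $2c_\lambda = 0$ is immediate, as $c_\lambda$ is the image of a class from $H^1(G_F, T/2T)$ under $\tfrac12 T/T \hookrightarrow V/T$. For the local conditions I would check that the localization of $c_\lambda$ lands in the image of $\msW_v$ in $H^1(G_v, V/T)$ at every place $v$, using the self-duality of each $\msW_v$ under the local pairing attached to $\lambda$ together with the fact that the theta extension admits locally the quadratic refinement obstructed only globally. I expect this last verification to be routine relative to the central identification of the previous paragraph.
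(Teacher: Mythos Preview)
Your proposal has a genuine gap at the step you yourself flag as hard: the identification $c = c_\lambda$. The tools you propose to use there—the duality identity, naturality, and the Baer-sum trilinearity \eqref{eq:intro_baer}—are all linear in the pair $(\phi,\psi)$, while the identity you need, $\CTP_E(\bar\phi, P_\lambda\bar\phi) = \CTP_E(\bar\phi, e_U)$, is quadratic in $\bar\phi$. Trilinearity compares pairings attached to \emph{different} extensions evaluated on the same independent pair; it does not convert a diagonal evaluation into a linear one. Your suggestion that ``the discrepancy between $\beta$ and the polarization identification is translation by $e_U$'' is not correct as stated: $\lambda$ genuinely gives an isomorphism $E \cong E^{\vee}$ as exact sequences, and combining this with the duality identity yields exactly the antisymmetry you already used in the formal half, with no residual Baer-sum defect.

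The paper's proof does not pass to the finite sequence $T/2T \to T/4T \to T/2T$ and does not use trilinearity. It works directly with the infinite sequence $0 \to T \to V \to V/T \to 0$ in $\SMod_{F,2}^{\infty}$, constructs a \emph{non-abelian} theta group $\mcH(V)$ over $V$ (not over $T/2T$), and invokes the general Theorem~\ref{thm:theta_main}. That theorem is proved by explicit cochain calculations with the non-abelian coboundary $q_{\mcH}$ of Notation~\ref{notat:theta_connecting}; Zarhin's identity \eqref{eq:Zark} is precisely the bridge between the quadratic map $q_{\mcH}$ and the bilinear cup product, and this non-abelian structure is what your abelian toolkit cannot supply. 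The group $U$ you mention enters only as the restriction of $\mcH(V)$ to $T$, used to identify $c_\lambda$ with the Poonen--Stoll class $\psi_{\text{PS}}$ of Definition~\ref{defn:PS_class}. Finally, your proposed verification that $c_\lambda$ satisfies the local conditions (``the theta extension admits locally the quadratic refinement obstructed only globally'') is not the right mechanism: the paper (Proposition~\ref{prop:PS_Sel}) instead uses that $\msW$ is isotropic for $q_{\text{loc-sum}}$, which follows from $\msW$ being a $\QQ_2$-vector space.
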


We will prove this result as Theorem \ref{thm:poonen-stoll_precise}. When applied to the $2$-adic Tate module of a principally polarized abelian variety,  it follows from  \cite[Remark 3.3]{PR11} that the explicit description of the obstruction class above agrees with that of Poonen and Stoll.

\begin{rmk}
If $G_F$ acts on $T/2T$ through the orthogonal group associated to a quadratic form  refining  $P_\lambda$,  then Theorem \ref{thm:poonen-stoll_into} implies that \eqref{eq:CT_flach} is alternating. 
Indeed, given such a quadratic form $q$, the map  $T/2T\rightarrow U$ sending $x$ to $(q(x),x)$ gives a $G_F$-equivariant splitting of the sequence \eqref{eq:PS_exact_Flach_seq}. This can be used to recover Cassels' result that the pairing associated to an elliptic curve over a number field is alternating \cite{Cass62}.
\end{rmk}

To prove Theorem \ref{thm:poonen-stoll_into}, we first prove a general version characterizing the failure of the pairing attached to a suitable exact sequence to be alternating, in the presence of an auxiliary object which we refer to as a \textit{theta group}. We present this result as Theorem \ref{thm:theta_main}. Here, the term ``theta group'' is borrowed from the study of abelian varieties. Given a principally polarized abelian variety $A$ over $F$, and given a positive integer $n$ indivisible by the characteristic of $F$, there is a natural choice of central extension
\[1 \to (F^s)^{\times} \to \mcH \to A[n] \to 0\]
for which the associated commutator pairing on $A[n]$ is the Weil pairing. The group $\mcH$ is known as a theta group, and is defined via isomorphisms of certain line bundles on $A$; see \cite{Mumf66} or Section \ref{sec:theta_line_bundle} for further details. We can then prove Theorem \ref{thm:poonen-stoll_into} by constructing a suitable theta group for the $\QQ_2$-vector space $V$.

In Section \ref{ssec:div_theta}, we give an algebraic method for constructing theta groups associated to finite modules with alternating structure. In Section \ref{sec:theta_line_bundle}, we show that this algebraic construction agrees with the usual construction of theta groups associated to an abelian variety. We remark  that the general construction allows structural results concerning the Galois cohomology of abelian varieties proved using theta groups, such as those established in \cite{PR12}, to be transported to the Bloch--Kato setting.  For additional applications of  Theorem \ref{thm:theta_main}, see  \cite[Sections 5 and 6]{MoSm21c}.

\subsection{Acknowledgments}  
We would like to thank L. Alexander Betts, Peter Koymans, Wanlin Li, Melanie Matchett Wood, Carlo Pagano, Bjorn Poonen, Karl Rubin, and Richard Taylor for helpful discussion about this work.  

This work was partially conducted during the period the second author served as a Clay Research Fellow. Previously, the author was supported in part by the National Science Foundation under Award No. 2002011.

The first author is supported by the Engineering and Physical
Sciences Research Council  grant EP/V006541/1. During part of the period in which this work was carried out, they were supported by the Max-Planck-Institut für Mathematik in Bonn and the Leibniz fellow programme at the Mathematisches Forschungsinstitut Oberwolfach, and would like to express their thanks for the excellent conditions provided.

\section{Notation}
\label{sec:notat}
Throughout this paper, $F$ will denote a global field; that is, it will either be a number field or a finite extension of $\FFF_p(t)$ for some rational prime $p$. Fixing a separable closure $\Fsep$ of $F$, we take $G_F$ to be the absolute Galois group $\Gal(\Fsep/F)$.

For each place $v$ of $F$, $F_v$ will denote the completion of $F$ at $v$. Taking $F_v^s$ to be some separable closure of $F_v$, we write $G_v$ for the absolute Galois group $\Gal(F_v^s/F_v)$ and $I_v$ for the inertia subgroup of $G_v$. In the case that $v$ is archimedean, we have $I_v = G_v$. 

For each $v$ we fix an embedding $\Fsep \hookrightarrow \Fvsep$. These embeddings realize the $G_v$ as closed subgroups of $G_F$, allowing us to view $G_{F}$-modules as $G_v$-modules.

Given a positive integer $n$ and an abelian group $B$, we will use the notation $B[n]$ to denote the kernel of the multiplication by $n$ map on $B$ for any integer $n$. If $n$ is indivisible by the characteristic of $F$, we define $\mu_n$ to be the set of $n^{\text{th}}$ roots of unity inside $F^s$.

\begin{notat}[Continuous cochain cohomology]
\label{notat:contcoch}
Given a topological group $G$, a $G$-module will be a discrete abelian group with a continuous action of $G$. Given a $G$-module $M$, $H^i(G, M)$ will denote the continuous group cohomology of $G$ acting on $M$. Our main reference for these groups is \cite{Neuk08}.

We will define these cohomology group using inhomogeneous cochains. Specifically, given $i \ge 0$, define $C^i(G, M)$ to be the set of continuous maps from $G^i$ to $M$. We have a coboundary map
 \[d\colon C^{i}(G, M) \rightarrow C^{i+1}(G, M)\]
defined by
\begin{align*}
df(\sigma_1, \dots, \sigma_{i+1}) = \,&\sigma_1f(\sigma_2, \sigma_3, \dots, \sigma_{i+1}) \\+&\sum_{j = 1}^{i } (-1)^j f(\sigma_1, \dots,\sigma_{j-1},\, \sigma_j\sigma_{j+1}, \sigma_{j+2}, \dots, \sigma_{i+1})\\
+ &(-1)^{i+1} f(\sigma_1, \dots,   \sigma_{i}).
\end{align*}
These maps define a complex
\[0 \to C^0(G, M) \xrightarrow{\,d\,} C^1(G, M)\xrightarrow{\,d\,} C^2 (G, M)\xrightarrow{\,d\,} \dots.\]
The groups $H^i(G, M)$ are then defined to be the cohomology groups of this complex. More specifically, take $Z^i(G, M)$ to be the kernel of the coboundary map $d$ on $C^i(G, M)$, and take $B^{i+1}(G, M)$ to be its image. Taking $B^0(G, M)$ to be $0$, we then have
\[H^i(G, M) = Z^i(G, M)/B^i(G, M)\]
for all $i \ge 0$. We give these groups the discrete topology.

Elements in $C^i(G, M)$ are called inhomogeneous $i$-cochains. The cochains in $Z^i(G, M)$ are called $i$-cocycles, and the elements in $B^i(G, M)$ are called $i$-coboundaries.

Given a closed subgroup $H$ of $G$, we can consider the restriction map
\[\res = \res_H^G\colon C^i(G, M) \to C^i(H, M).\]
This sends cocycles to cocycles and coboundaries to coboundaries, and so defines a restriction map
\[\res \colon H^i(G, M) \to H^i(H, M).\]

Given a place $v$ of $F$ as above and a $G_v$ module $M$, we define the group of unramified $1$-cocycle classes in $M$ by
\begin{equation} \label{eq:unram_conds_def}
H^1_{\text{ur}}(G_v, M) = \ker\left(H^1(G_v, M) \xrightarrow{\,\res\,}H^1(I_v, M)\right).
\end{equation}
Given $\phi$ in $C^i(G_F, M)$ or $H^i(G_F, M)$, we will sometimes write $\phi_v$ as shorthand for the restriction $\res^{G_F}_{G_v}(\phi)$. More often, given $\phi_v \in H^i(G_v, M)$ for each place $v$ of $F$, we will write $(\phi_v)_v$ for the tuple in $\prod_v H^i(G_v, M)$ whose value at $v$ is $\phi_v$.
\end{notat}

\begin{notat}[Cup product on cochains]
\label{notat:cup}
Suppose we have discrete $G$-modules $M_1, M_2, N$ and a bilinear $G$-equivariant pairing 
\[P : M_1 \otimes M_2 \rightarrow N.\]
Given $f_1 \in C^i(G, M_1)$ and $f_2 \in C^j(G, M_2)$, we define the cup product $f_1 \cup_{P} f_2$ in $C^{i+j}(G, N)$ by
\[(f_1 \cup_{P} f_2)(\sigma_1, \dots, \sigma_{i+j}) = P\left(f_1(\sigma_1, \dots, \sigma_i), \,\sigma_1 \sigma_2 \dots \sigma_i f_2(\sigma_{i+1}, \dots, \sigma_{i+j})\right).\]
The standard cohomological cup product is recovered by restricting this construction to cocycles \cite[Proposition 1.4.8]{Neuk08}. We record \cite[Proposition 1.4.1]{Neuk08}, which states that this construction satisfies
\begin{equation}
\label{eq:cobcup}
d\left( f_1 \cup_{P} f_2\right) = df_1 \cup_{P} f_2  \,\,+\,\, (-1)^i \cdot f_1 \cup_P df_2,
\end{equation}
as can be proven directly. If the choice of pairing $P$ is clear, we will sometimes omit it.

Given a $G_F$-module $M$, we will write the cup product constructed from the standard pairing from $M \times M^{\vee}$ to $(\Fsep)^{\times}$ by $\cup_M$.
\end{notat}

\section{The pairing and its main symmetry}
\label{sec:def_and_sym}
 
In this section we define the generalization of the  Cassels--Tate  pairing  of interest to this paper and  prove Theorem \ref{thm:CTP_intro}. As a starting point, we complete our specification of the category $\SMod_F$ introduced in Section \ref{ssec:SModF} by giving the pairing used to define the duality functor $\vee$.

\begin{defn}
\label{defn:loc_pair}
For every place $v$ of $F$, local Tate duality \cite[Theorem 2.1]{Tate63} gives a perfect pairing
\begin{equation} \label{eq:local_tate_pairing}
H^1(G_v, M) \times H^1(G_v, M^{\vee}) \rightarrow \QQ/\Z,
\end{equation}
defined as the composition of the standard cup product map
 \[\cup_{M}: H^1(G_v, M) \times H^1(G_v, M^{\vee}) \rightarrow  H^2\left(G_v, \,(F_v^s)^{\times} \right)\]
 and the local invariant map 
\begin{equation} \label{eq:invariant_map}
\textup{inv}_v: H^2\left(G_v, \,(F_v^s)^{\times} \right) \to \mathbb{Q}/\mathbb{Z}.
\end{equation}
Under this pairing, $H^1_{\text{ur}}(G_v, M)$ and $H^1_{\text{ur}}(G_v, M^{\vee})$ are orthogonal complements at all but finitely many places $v$ \cite[Theorem 7.2.15]{Neuk08}. As a result, summing the pairings \eqref{eq:local_tate_pairing} gives a  continuous bilinear pairing
\begin{equation}
\label{eq:loc_pairing_2}
\resprod_{v \text{ of }F} H^1(G_v, M)\, \times\, \resprod_{v \text{ of }F} H^1(G_v, M^{\vee}) \to\QQ/\Z,
\end{equation}
which  is topologically perfect in the sense that the induced map from $\tresprod{v} H^1(G_v, M^{\vee})$ to the Pontryagin dual of $\tresprod{v} H^1(G_v, M)$ is a topological isomorphism.
\end{defn}

We now give the definition of the pairing \eqref{cassels_tate_pairing_intro}.  This  is essentially identical to the standard Weil pairing definition of the Cassels--Tate pairing for abelian varieties appearing in \cite{Poon99}, \cite{Tate63}, or  \cite{Milne86}. Before parsing this definition, it may be helpful to consider the heuristic explanation of why the Cassels--Tate pairing ought to exist, which we give in Section \ref{ssec:left_kernel}.
\begin{defn}
\label{defn:CTP}
Take
\begin{equation}
\label{eq:first_E}
E\,=\,\left[0 \rightarrow (M_1, \msW_1) \xrightarrow{\,\,\iota\,\,} (M, \msW) \xrightarrow{\,\,\pi\,\,} (M_2, \msW_2) \rightarrow 0\right]
\end{equation}
to be an exact sequence in $\SMod_F$; recall that this means that this sequence is exact as a sequence of Galois modules, that $\iota^{-1}(\msW) = \msW_1$, and that $\pi(\msW) = \msW_2$. We define a pairing
\[\CTP_E \colon \Sel\, M_2 \, \times\, \Sel\, M_1^{\vee}\to \QQ/\Z\]
as follows.

Suppose we are given $\phi$ in $\Sel\, M_2$ and $\psi$ in $\Sel\, M_1^{\vee}$. Choose cocycles $\overline{\phi} \in Z^1(G_{F}, M_2)$ and $\overline{\psi} \in Z^1(G_{F}, M_1^{\vee})$ representing this pair of Selmer elements, and choose $f\in C^1( G_{F}, M)$ satisfying
\[\overline{\phi} = \pi \circ f.\]
Then $\iota^{-1}(df)$ defines a cocycle in $Z^2(G_{F}, M_1)$, and the cup product 
\[\iota^{-1}(df) \,\cup_{M_1}\, \overline{\psi}\]
lies in $Z^3(G_F, \,(F^s)^{\times})$.

Per \cite[(8.3.11)]{Neuk08} or \cite[Corollary 4.18]{Milne86}, we have
\begin{equation}
\label{eq:H3_is_trivial}
 H^3\left(G_F, \,(F^s)^{\times}\right)[n] = 0
\end{equation}
for any positive integer $n$ indivisible by the characteristic of $F$. We may thus choose a cochain $\epsilon$ in $C^2(G_F, (F^s)^{\times})$ so
\[d\epsilon \,=\, \iota^{-1}(df) \,\cup_{M_1}\, \overline{\psi}.\]

Finally, we can choose cocycles $\overline{\phi}_{v, M}$ in $Z^1(G_v, M)$ for each  place $v$ of $F$, so that  $\pi(\overline{\phi}_{v, M}) = \overline{\phi}_v$,
and so that $(\overline{\phi}_{v, M})_v$ represents a class in the local conditions group $\msW$.

 Having made these choices, we find that 
\[\gamma_v \,=\, \iota^{-1}(f_v - \overline{\phi}_{v, M}) \,\cup_{M_1} \,\overline{\psi}_v \,-\, \epsilon_v \]
lies in $Z^2\left(G_v, (F^s)^{\times}\right)$ for each $v$, and we define the pairing by
\[\text{CTP}_E(\phi, \psi) = \sum_{v \text{ of }F} \inv_v(\gamma_v).\]
\end{defn}

We will content ourselves with a sketch of the proof for this next basic proposition.
\begin{prop}
\label{prop:well_def}
The pairing given in Definition \ref{defn:CTP} is well-defined and bilinear. 
 It also satisfies the naturality property of Theorem \ref{thm:CTP_intro} (ii).
\end{prop}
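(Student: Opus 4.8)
The plan is to verify, in turn, that each $\gamma_v$ is a local $2$-cocycle, that $\sum_v \inv_v(\gamma_v)$ has only finitely many nonzero terms, that the sum is independent of every choice made in Definition \ref{defn:CTP}, and finally that the resulting pairing is bilinear and natural. First I would record that $\gamma_v \in Z^2(G_v, (F^s)^{\times})$. Writing $g_v = \iota^{-1}(f_v - \overline{\phi}_{v,M}) \in C^1(G_v, M_1)$ --- which is defined because $\pi(f_v - \overline{\phi}_{v,M}) = \overline{\phi}_v - \overline{\phi}_v = 0$ --- one has $dg_v = \iota^{-1}(df)_v$ since $\overline{\phi}_{v,M}$ is a cocycle. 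The Leibniz rule \eqref{eq:cobcup} together with $d\overline{\psi} = 0$ then gives $d(g_v \cup_{M_1}\overline{\psi}_v) = \iota^{-1}(df)_v \cup_{M_1}\overline{\psi}_v = d\epsilon_v$, whence $d\gamma_v = 0$. Finiteness of the sum is the usual unramifiedness argument: outside a finite set $S$ of places (absorbing the ramification of the modules, the places where $\msW$ differs from the unramified condition, and the archimedean places), every cochain in sight is unramified, so $\gamma_v$ is unramified and $\inv_v$ kills it.

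The core of well-definedness is independence of choices, which I would organize so that the two genuinely global inputs are isolated. Changing the lift $f$ by a cochain valued in $\iota(M_1)$, changing the representatives $\overline{\phi}, \overline{\psi}$ by coboundaries, and then taking the compatible $\epsilon$ and local lifts $\overline{\phi}_{v,M}$ dictated by these changes, alters each $\gamma_v$ only by a local coboundary (a short computation with \eqref{eq:cobcup}), hence does not change $\inv_v(\gamma_v)$; here I use that $\pi$ is surjective on modules to lift $0$-cochains and that $\pi(\msW) = \msW_2$ to keep the local lifts in $\msW$. Two facts carry real content. Independence of $\epsilon$: two valid choices differ by a cocycle $\delta \in Z^2(G_F, (F^s)^{\times})$, and the sum of its local invariants vanishes by the reciprocity law of global class field theory (the sum $\sum_v \inv_v \circ \res_v$ vanishes on $H^2(G_F, (F^s)^{\times})$). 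Independence of the local lifts $\overline{\phi}_{v,M}$: replacing them changes the tuple $(g_v)_v$ by a tuple $(a_v)_v$ of $1$-cocycles whose class lies in $\iota^{-1}(\msW) = \msW_1$, so the resulting change $\sum_v \inv_v(a_v \cup_{M_1}\overline{\psi}_v)$ is the value of the pairing \eqref{eq:loc_pairing_2} between a class in $\msW_1$ and the localization of $\psi$, which lies in $\msW_1^{\perp}$; these are orthogonal, so the change vanishes. These two steps --- reciprocity and the orthogonality of $\msW_1$ and $\msW_1^{\perp}$ --- are where I expect the only real work, everything else being formal cochain bookkeeping.

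Bilinearity is then immediate: for fixed $\psi$ (resp. $\phi$) one may choose all auxiliary data additively in $\phi$ (resp. $\psi$), and since $\cup_{M_1}$ and $\inv_v$ are bilinear/additive the pairing adds.

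For the naturality property of Theorem \ref{thm:CTP_intro}(ii), I would compute both sides with choices propagated through the morphism $f$ (renaming the lift of Definition \ref{defn:CTP} to $\tilde f$ to avoid a clash). Starting from a representative $\overline{\phi}$ of $\phi$, a lift $\tilde f \in C^1(G_F, M)$ of $\overline{\phi}$, a representative $\overline{\psi}$ of $\psi \in \Sel (M_1')^{\vee}$, and local lifts $\overline{\phi}_{v,M}$, I would use $f_2\overline{\phi}$, the lift $f\circ \tilde f$ (legitimate since $\pi' \circ f = f_2 \circ \pi$), the representative $f_1^{\vee}\circ\overline{\psi}$ of $f_1^{\vee}(\psi)$, and the local lifts $f\circ\overline{\phi}_{v,M}$ (which lie in $\msW'$ because $f(\msW)\subseteq\msW'$) on the $E'$-side. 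Commutativity of \eqref{eq:nat_setup} gives $(\iota')^{-1} d(f\tilde f) = f_1\circ\iota^{-1}(d\tilde f)$ and the analogous identity for the local differences, while the defining adjunction $\langle f_1(a), b\rangle_{M_1'} = \langle a, f_1^{\vee}(b)\rangle_{M_1}$ makes the two cup products agree as cochains: $\big(f_1\circ\iota^{-1}(d\tilde f)\big)\cup_{M_1'}\overline{\psi} = \iota^{-1}(d\tilde f)\cup_{M_1}(f_1^{\vee}\overline{\psi})$. Hence a single $\epsilon$ serves both sides and, term by term, the local cocycles $\gamma_v$ coincide, yielding $\CTP_{E'}(f_2\phi,\psi) = \CTP_E(\phi, f_1^{\vee}\psi)$.
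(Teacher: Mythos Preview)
Your proof is correct and follows essentially the same approach as the paper's own (sketched) argument: the two substantive ingredients you isolate---reciprocity for the Brauer group for independence of $\epsilon$, and orthogonality of $\msW_1$ and $\msW_1^{\perp}$ for independence of the local lifts---are exactly the ones the paper invokes, and your naturality argument via the adjunction identity $\langle f_1(a),b\rangle_{M_1'} = \langle a, f_1^{\vee}(b)\rangle_{M_1}$ matches the paper's. You supply more detail (verifying $\gamma_v$ is a cocycle, finiteness of the sum, propagating choices through the morphism), but the structure is the same.
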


\begin{proof}
We need to show that the value of this pairing $\CTP_E(\phi, \psi)$ does not depend on the choice of tuple
\[(\overline{\psi},\, \overline{\phi},\, f, \,(\overline{\phi}_{v, M})_v, \,\epsilon).\]
For $\Delta$ in $H^2\left(G_F, \,(\Fsep)^{\times}\right)$, reciprocity for the Brauer group gives $\sum_v \inv_v(\Delta_v) = 0$. From this, we see that the pairing does not depend on $\epsilon$.

Since $\msW_1 = \iota^{-1}(\msW)$ is orthogonal to $(\psi_v)_v$ under the local pairing \eqref{eq:loc_pairing_2}, the value also does not depend on the choice of $(\overline{\phi}_{v, M})_v$ projecting to $\msW$.

We can then account for a change in $f$ or $\overline{\psi}$ by adjusting $\epsilon$ and using \eqref{eq:cobcup} , and we can account for a change in $\overline{\phi}$ by adjusting $(\overline{\phi}_{v, M})_v$. So the pairing is well-defined, and bilinearity follows immediately.

For naturality, consider the commutative diagram \eqref{eq:nat_setup}. Given a place $v$ of $F$, and given $g  \in C^1(G_v, M_{1})$ and $g' \in C^1(G_v, (M_{1}')^{\vee})$, the identity
\[g  \cup_{M_{1 }} f_1^{\vee}(g') \,=\, f_1(g ) \cup_{M_{1 }'} g'.\]
follows from the definition of $f_1^{\vee}$. Naturality then follows  from the definition of the Cassels--Tate pairing.
\end{proof}

\begin{rmk}
Group cohomology groups may be defined as the right derived functors of $H^0$, and it is natural to want a definition of the Cassels--Tate pairing that starts from an arbitrary injective resolution of the involved $G_F$-modules, rather than one relying on cochains. Nekov{\'a}{\v{r}} gives an approach to defining the pairing in such a manner for certain infinite modules in \cite[10.1.4]{Neko06}. An advantage of this approach is that it can be extended to handle finite abelian group schemes over $F$ of order divisible by the characteristic of $F$, where Selmer groups must be defined using fppf cohomology. We will return to this alternative setup in \cite{MoSm21b}.
\end{rmk}

\subsection{The left kernel of the Cassels--Tate pairing}
\label{ssec:left_kernel}

We consider the following result:
\begin{prop}
\label{prop:left_kernel}
The left kernel of the pairing given in Definition \ref{defn:CTP} is $\pi(\Sel\, M)$.
\end{prop}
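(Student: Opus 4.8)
The plan is to prove the two inclusions separately, the reverse one (left kernel $\subseteq \pi(\Sel\, M)$) being the substantial one. Throughout I would write $\delta\colon H^1(G_F, M_2) \to H^2(G_F, M_1)$ for the connecting map attached to $E$, use $\loc$ for the localization maps into the restricted products $\resprod_v H^1(G_v, -)$, and recall from Definition \ref{defn:loc_pair} that summed local Tate duality makes $\resprod_v H^1(G_v, M_1)$ and $\resprod_v H^1(G_v, M_1^\vee)$ into topologically Pontryagin dual groups, under which $\msW_1$ and $\msW_1^\perp$ are mutual orthogonal complements. For the easy inclusion, given $\phi = \pi(\widetilde\phi)$ with $\widetilde\phi \in \Sel\, M$, I would run Definition \ref{defn:CTP} with $f$ a global \emph{cocycle} representing $\widetilde\phi$; then $df = 0$, so $\iota^{-1}(df) = 0$ and one may take $\epsilon = 0$. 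As $\widetilde\phi \in \Sel\, M$, its localizations already lie in $\msW$, so I may take $\overline\phi_{v, M} = f_v$, making every $\gamma_v = 0$. Hence $\pi(\Sel\, M)$ lies in the left kernel.

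For the reverse inclusion, fix $\phi$ in the left kernel. I would first observe that $\delta(\phi)$ lands in $\Sha^2(M_1) := \ker\big(H^2(G_F, M_1) \to \prod_v H^2(G_v, M_1)\big)$: because $E$ is exact we have $\pi(\msW) = \msW_2$, so the Selmer condition $(\phi_v)_v \in \msW_2$ lets me lift each $\phi_v$ into $H^1(G_v, M)$, forcing $\delta(\phi)_v = 0$ at every $v$. \textbf{Step A} (killing $\delta(\phi)$): restricting the test elements to $\psi \in \Sha^1(M_1^\vee) := \ker(\loc^\vee) \subseteq \Sel\, M_1^\vee$, I would check that Definition \ref{defn:CTP} computes nothing but the global Poitou--Tate pairing $\langle \delta(\phi), \psi\rangle$ on $\Sha^2(M_1) \times \Sha^1(M_1^\vee)$: for such $\psi$ the localizations $\psi_v$ are coboundaries, so the choice of local lift $\overline\phi_{v,M}$ drops out and $\gamma_v = b_v \cup_{M_1}\psi_v - \epsilon_v$ is exactly the standard cochain representative of that duality pairing (with $b_v$ any local primitive of $\iota^{-1}(df)_v$ and $\epsilon$ a global primitive of $\iota^{-1}(df)\cup_{M_1}\psi$). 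Since this Poitou--Tate pairing is perfect \cite{Neuk08}, the vanishing of $\CTP_E(\phi, -)$ on $\Sha^1(M_1^\vee)$ forces $\delta(\phi) = 0$.

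\textbf{Step B} (the local defect): knowing $\delta(\phi) = 0$, I may choose a genuine global lift $\widetilde\phi \in H^1(G_F, M)$ of $\phi$ together with a local lift $(\eta_v)_v \in \msW$ of $(\phi_v)_v$, and set $\zeta = \iota^{-1}\!\big(\loc(\widetilde\phi) - (\eta_v)_v\big) \in \resprod_v H^1(G_v, M_1)$. A short diagram chase (adjusting $\widetilde\phi$ by $\iota_*H^1(G_F, M_1)$ and using $\iota^{-1}(\msW) = \msW_1$) gives
\[\phi \in \pi(\Sel\, M) \iff \zeta \in \msW_1 + \loc\big(H^1(G_F, M_1)\big).\]
Re-running Definition \ref{defn:CTP} with $f$ now a cocycle and $\epsilon = 0$ identifies, for every $\psi \in \Sel\, M_1^\vee$,
\[\CTP_E(\phi, \psi) = \sum_v \inv_v(\zeta_v \cup_{M_1} \psi_v),\]
i.e. summed local Tate duality between $\zeta$ and $\loc(\psi)$. \textbf{Step C} (Poitou--Tate): I would then invoke global duality \cite{Neuk08}, by which $\loc(H^1(G_F, M_1))$ and $\loc(H^1(G_F, M_1^\vee))$ are mutual exact annihilators; combined with $\msW_1 = (\msW_1^\perp)^\perp$, this shows the annihilator of $\loc(\Sel\, M_1^\vee) = \msW_1^\perp \cap \loc(H^1(G_F, M_1^\vee))$ is precisely $\msW_1 + \loc(H^1(G_F, M_1))$. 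Thus $\phi$ lies in the left kernel iff $\zeta$ annihilates $\loc(\Sel\, M_1^\vee)$, iff $\zeta \in \msW_1 + \loc(H^1(G_F, M_1))$, iff $\phi \in \pi(\Sel\, M)$ by Step B.

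I expect the main obstacle to be Step A: matching the cochain recipe of Definition \ref{defn:CTP} against the standard Poitou--Tate pairing on $\Sha^2(M_1) \times \Sha^1(M_1^\vee)$ and confirming its perfectness uniformly over global fields (including characteristic $p$). The bookkeeping in Step C is the other delicate point, where one must verify that the orthogonal-complement identity $(\msW_1^\perp \cap \loc(H^1(G_F,M_1^\vee)))^\perp = \msW_1 + \loc(H^1(G_F,M_1))$ interacts correctly with the restricted-product topology and the relevant closures.
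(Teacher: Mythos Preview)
Your proposal is correct and follows essentially the same approach as the paper: the paper also splits the argument into two subclaims, first using the perfect Poitou--Tate pairing $\Sha^2(M_1)\times\Sha^1(M_1^\vee)\to\QQ/\Z$ to force $\delta(\phi)=0$ (your Step~A), and then, after lifting $\phi$ globally, using the nondegenerate pairing between $\tresprod{v}H^1(G_v,M_1)/(\msW_1+\loc(H^1(G_F,M_1)))$ and $\Sel\,M_1^\vee/\Sha^1(M_1^\vee)$ to finish (your Steps~B--C, which the paper phrases via a snake-lemma sequence). Your topological worry in Step~C disappears here because $\msW_1$ is compact open, so $\msW_1+\loc(H^1(G_F,M_1))$ is open hence closed.
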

The proof of this proposition requires little adjustment from the classical case, as presented in e.g. \cite[Section I.6]{Milne86}. But it is worth reflecting on why a  bilinear pairing between $\Sel \, M_2$ and $\Sel \, M_1^{\vee}$ exists with this property. The key tool we need for this is the nine-term Poitou--Tate exact sequence as given in \cite[Theorem 3.1]{Tate63} or \cite[Theorem 4.10]{Milne86}. Specifically, for $(M_1, \msW_1)$ in $\SMod_F$, we will use the following two facts:
\begin{enumerate}
\item There is a perfect bilinear pairing of finite abelian groups
 \begin{equation}
\label{eq:first_step_duality}
\textup{PTP}:\Sha^2(M_1) \times \Sha^1(M_1^{\vee}) \to \QQ/\Z,
\end{equation}
where we have taken the notation
\begin{align*}
 &\Sha^2(M_1) \,=\, \ker\llop H^2(G_F, M_1) \rightarrow \resprod_v H^2(G_v, M_1)\rlop \quad\text{and}\\
&\Sha^1(M_1^{\vee})\,=\, \ker\llop H^1(G_F, M_1^{\vee}) \rightarrow \resprod_v H^1(G_v, M_1^{\vee})\rlop.
\end{align*}
\item Take $\msW_{\text{global}}(M_1)$ to be the image of $H^1(G_F, M_1)$ in $\tresprod{v} H^1(G_v, M_1)$, and define $\msW_{\text{global}}(M_1^{\vee})$ similarly. Then, with respect to the local pairing \eqref{eq:loc_pairing_2}, we have
\[ \msW_{\text{global}}(M_1)^{\perp} \,=\, \msW_{\text{global}}(M_1^{\vee}).\]
This duality result implies
\[\left(\msW_{\text{global}}(M_1) + \msW_1\right)^{\perp}\, =\, \msW_{\text{global}}(M_1^{\vee}) \cap \msW_1^{\perp} ,\]
and \eqref{eq:loc_pairing_2} restricts to a continuous nondegenerate pairing
\begin{equation}
\label{eq:second_step_duality}
\frac{ \tresprod{v}H^1(G_v, M_1)}{\msW_{\text{global}}(M_1) + \msW_1} \,\,\times\,\, \frac{\Sel\, M_1^{\vee}}{\Sha^1(M_1^{\vee})} \xrightarrow{\quad} \QQ/\Z.
\end{equation}
\end{enumerate}

Now take $E$ as in \eqref{eq:first_E}. Given an element $\phi$ of $\Sel\big(M_2,\, \msW_2\big)$, we are interested in measuring the obstruction to lifting $\phi$ under the map
\begin{equation}
\label{eq:doesitlift}
\pi: \Sel( M, \, \mathscr{W}) \to \Sel(M_2,\, \msW_2).
\end{equation}
There seem to be two distinct obstructions to this lift. First,  the long exact sequence associated to $E$ gives  an exact sequence
\begin{equation}
\label{eq:CT_obstr1}
H^1(G_F, M) \xrightarrow{\,\,\pi\,\,} H^1(G_F, M_2) \xrightarrow{\,\,\delta\,\,}  H^2(G_F, M_1).
\end{equation}
Since $\phi$ locally maps to an element of $\pi(\msW)$, we find that the image $\delta(\phi)$ lies in $\Sha^2(M_1)$. In particular, $\phi$ can only lift to $H^1(G_{F}, M)$ if its image in $\Sha^2(M_1)$ is zero.

Even if $\phi$ lifts to an element of $H^1(G_{F}, M)$, it still might not lift to an element of $\text{Sel}(M, \mathscr{W})$. To understand this failure, consider the following commutative diagram with exact rows:
\[\begin{tikzcd}[column sep=small]
 & H^1(G_F, M_1) \arrow{r} \arrow{d} & H^1(G_F, M) \arrow{r}\arrow{d} & \ker\left(\begin{array}{l} H^1(G_F, M_2)\\ \rightarrow H^2(G_F, M_1)\end{array}\right) \arrow{r} \arrow{d} &0 \\
0 \arrow{r} & \dfrac{\tresprod{v}H^1(G_{v}, M_1)}{\msW_1}  \arrow{r} &  \dfrac{\tresprod{v} H^1(G_v, M)}{\mathscr{W}} \arrow{r} & \dfrac{\tresprod{v} H^1(G_v, M_2)}{\msW_2 }. & \end{tikzcd}\]
The snake lemma then gives an exact sequence
\begin{align}
\label{eq:CT_obstr2}
0 \rightarrow  \Sel( M, \, \mathscr{W}) \,&\xrightarrow{\,\,\pi\,\,} \,\ker\Big(\Sel\big(M_2,\, \msW_2\big) \rightarrow \Sha^2(M_1)\Big)\\
&\rightarrow\, \coker\left(H^1(G_{F}, M_1) \rightarrow\dfrac{\tresprod{v}H^1(G_v, M_1)}{\msW_1}\right).\nonumber
\end{align}
If $\phi$ maps to zero in this cokernel, it is necessarily of the form $\pi(\phi')$ for some $\phi'$ in $\text{Sel}(M, \mathscr{W})$. This cokernel then gives the second obstruction to lifting $\phi$ to  $\text{Sel}(M, \mathscr{W})$.

The main observation that allows us to construct the Cassels--Tate pairing is that these obstructions can be packaged together using the dualities \eqref{eq:first_step_duality} and \eqref{eq:second_step_duality}. The first of these gives a pairing between $\Sha^2(M_1)$, where the first obstruction lies, and $\Sha^1(M_1^{\vee})$. And the second gives a pairing between the cokernel considered  above and $\Sel\, M_1^{\vee}/ \Sha^1(M_1^{\vee})$. We can thus account for both obstructions using two filtered pieces of the Selmer group 
$\Sel\, M_1^{\vee}.$ This is the basic framework the Cassels--Tate pairing uses.

\begin{proof}[Proof of Proposition \ref{prop:left_kernel}]
We now verify that the construction of Definition \ref{defn:CTP} correctly accounts for both obstructions mentioned above. Specifically, the proposition is a consequence of the following two subclaims:
\begin{enumerate}
\item Suppose $\phi$ lies in $\Sel\, M_2$. Then $\phi$ can be written as $\pi(\phi')$ for some $\phi'$ in $H^1(G_{F}, M)$ if and only if $\text{CTP}(\phi, \psi) = 0$ for all $\psi$ in $\Sha^1(M_1^{\vee})$.
\item If $\phi$ takes the form $\pi(\phi')$, then it lies in $\pi(\Sel\, M)$ if and only if $\text{CTP}(\phi, \psi) = 0$ for all $\psi$ in $\Sel\, M_1^{\vee}$.
\end{enumerate}
We start by verifying the first claim. Choose an element $f$ in $C^1(G_{F}, M)$ such that $\pi\circ f$ represents the class of $\phi$. Taking $\delta$ to be the connecting map of the sequence \eqref{eq:CT_obstr1}, we see that $\iota^{-1}(df)$ represents the class of $\delta(\phi)$ in $H^2(G_{F}, M_1)$. Comparing Definition \ref{defn:CTP}  with Tate's definition  \cite{Tate63}  of the pairing \eqref{eq:first_step_duality}, we see that the identity 
\begin{equation*}
\label{eq:CT_PT}
\text{CTP}(\phi, \psi) = \text{PTP}(\iota^{-1}(df), \,\psi) 
\end{equation*}
holds for all $\psi$  in $\Sha^1(M_1^{\vee})$.
Nondegeneracy of \eqref{eq:first_step_duality} now shows that  $\text{CTP}(\phi, \psi)$ is zero for all $\psi \in \Sha^1(M_1^{\vee})$ if and only if  $\delta(\phi)=0$. This  proves the claim.  

To establish the second claim, suppose $\overline{\phi}$ can be written in the form $\pi(\overline{\phi'})$. We then can write
\[\text{CTP}(\phi, \psi) = \sum_{v}\text{inv}_v\left( \iota^{-1} \left(\overline{\phi'}_v - \overline{\phi}_{v, M}\right) \,\cup_{M_1}\, \overline{\psi}_v\right).\]
From the construction in the snake lemma,   the final map of \eqref{eq:CT_obstr2} is induced by  
\[\phi \mapsto \left(\overline{\phi'}_v - \overline{\phi}_{v, M}\right)_{v \text{ of } F}.\]
 The result then follows from the non-degeneracy of \eqref{eq:second_step_duality}.
\end{proof}

\subsection{The duality identity and the right kernel of the Cassels--Tate pairing}
\label{ssec:du_id}
We found the left kernel of $\CTP_E$ by a straightforward generalization of the work in \cite{Tate63, Milne86, Flach90}. Finding the right kernel  takes a little more work. We start by stating the end goal.
\begin{prop}
\label{prop:right_kernel}
The right kernel of the pairing $\CTP_E$ of Definition \ref{defn:CTP} is $\iota^{\vee}(\Sel \, M^{\vee})$.
\end{prop}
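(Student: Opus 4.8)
The plan is to obtain the right kernel formally from the duality identity of Theorem~\ref{thm:CTP_intro}(i) together with the left-kernel computation of Proposition~\ref{prop:left_kernel}, the latter applied not to $E$ but to the dual sequence
\[E^{\vee} = \left[0 \to \left(M_2^{\vee},\,\msW_2^{\perp}\right) \xrightarrow{\,\,\pi^{\vee}\,\,}\left(M^{\vee},\,\msW^{\perp}\right)  \xrightarrow{\,\,\iota^{\vee}\,\,} \left(M_1^{\vee},\,\msW_1^{\perp}\right) \to 0\right],\]
which is again exact. Reading $E^\vee$ through the template of Definition~\ref{defn:CTP}, its surjection is $\iota^\vee$ and its middle term is $M^\vee$, so the associated pairing is $\CTP_{E^\vee}\colon \Sel\,M_1^\vee \times \Sel\,(M_2^\vee)^\vee \to \QQ/\Z$. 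Proposition~\ref{prop:left_kernel} applied to $E^\vee$ then identifies the left kernel of $\CTP_{E^\vee}$ with $\iota^\vee(\Sel\,M^\vee)$, which is precisely the group we wish to realize as the right kernel of $\CTP_E$.

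It remains to match the right kernel of $\CTP_E$ with the left kernel of $\CTP_{E^\vee}$. Fix $\psi \in \Sel\,M_1^\vee$. The duality identity gives $\CTP_E(\phi,\psi) = \CTP_{E^\vee}(\psi,\beta(\phi))$ for all $\phi \in \Sel\,M_2$, where the evaluation isomorphism $\beta$ furnishes, by functoriality of $\Sel$, an isomorphism of finite abelian groups $\Sel\,M_2 \isoarrow \Sel\,(M_2^\vee)^\vee$. Thus as $\phi$ runs over $\Sel\,M_2$, the element $\beta(\phi)$ runs over all of $\Sel\,(M_2^\vee)^\vee$. Consequently $\psi$ annihilates $\Sel\,M_2$ under $\CTP_E$ if and only if $\psi$ annihilates $\Sel\,(M_2^\vee)^\vee$ under $\CTP_{E^\vee}$; that is, the right kernel of $\CTP_E$ equals the left kernel of $\CTP_{E^\vee}$. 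Combined with the previous paragraph, this yields $\iota^\vee(\Sel\,M^\vee)$ and completes the proof.

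The entire argument rests on the duality identity, so I expect that to be the main obstacle: whereas Definition~\ref{defn:CTP} is visibly tailored to detect the obstruction to lifting $\phi$ along $\pi$, there is no direct sign that it simultaneously detects the obstruction to lifting $\psi$ along $\iota^\vee$, and the duality identity is exactly what supplies this. To prove it I would choose representatives $f \in C^1(G_F,M)$ with $\pi f = \overline{\phi}$ and $g \in C^1(G_F,M^\vee)$ with $\iota^\vee g = \overline{\psi}$, and analyze the global cochain $f \cup_M g \in C^2(G_F,(\Fsep)^{\times})$. The coboundary formula \eqref{eq:cobcup} gives $d(f \cup_M g) = df \cup_M g - f \cup_M dg$. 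Using $\pi(df)=d\overline{\phi}=0$ to write $df = \iota(\iota^{-1}(df))$, together with the adjunction $\iota(a)\cup_M b = a \cup_{M_1}\iota^\vee(b)$ already used in the naturality proof, the first term becomes the global cocycle $\iota^{-1}(df)\cup_{M_1}\overline{\psi}$ appearing in $\CTP_E$; dually, setting $\chi = (\pi^\vee)^{-1}(dg) \in Z^2(G_F,M_2^\vee)$ and using $f \cup_M \pi^\vee(\chi) = \pi(f)\cup_{M_2}\chi = \overline{\phi}\cup_{M_2}\chi$ identifies the second term, after a graded-commutativity comparison of cup products, with the global cocycle defining $\CTP_{E^\vee}(\psi,\beta(\phi))$. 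Since the two global expressions differ by the coboundary $d(f\cup_M g)$, their summed local invariants agree by reciprocity for the Brauer group, once the local correction terms are matched using that $\msW^{\perp}$ is the orthogonal complement of $\msW$. The delicate points, and the part I expect to demand the most care, are the precise signs coming from graded-commutativity and the bookkeeping of these local conditions terms.
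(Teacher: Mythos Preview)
Your argument is correct and matches the paper's primary proof of this proposition essentially verbatim: apply Proposition~\ref{prop:left_kernel} to $E^{\vee}$ to identify the left kernel of $\CTP_{E^{\vee}}$ as $\iota^{\vee}(\Sel\, M^{\vee})$, then invoke the duality identity to transport this to the right kernel of $\CTP_E$. Your sketch of the duality identity in the final paragraph is also on the right track, and the paper carries it out by introducing an auxiliary definition $\CTPb{E}$ that packages the ``$g$-side'' choices symmetrically; the delicate sign bookkeeping you anticipate is handled there by an explicit cochain $h(\sigma,\tau)=\langle \overline{\phi}(\sigma\tau),\,(\pi^{\vee})^{-1}(dg)(\sigma,\tau)\rangle$ whose coboundary realizes the graded-commutativity discrepancy.

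It is worth noting that the paper also supplies a second, direct proof of the proposition that avoids the duality identity entirely: using the alternative definition $\CTPb{E}$ one sees immediately that $\iota^{\vee}(\Sel\, M^{\vee})$ lies in the right kernel, and then a cardinality comparison between the two nondegenerate-on-one-side pairings $\CTP_E$ and $\CTP_{E^{\vee}}$ forces both to be perfect on the indicated quotients. This route has the minor advantage of establishing the right kernel independently of the full duality identity, at the cost of relying on finiteness of the Selmer quotients.
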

This can be proved directly, or as a simple consequence of the following essential symmetry result.
\begin{thm}[Duality identity]
\label{thm:dual_id}
The pairing given in Definition \ref{defn:CTP} satisfies
\begin{equation}
\label{eq:sym}
\CTP_E(\phi, \psi) = \CTP_{E^{\vee}}(\psi, \beta(\phi)),
\end{equation}
for all $\phi$ in $\Sel  (M_2,\, \msW_2)$ and all $\psi$ in $\Sel(M_1^{\vee},\, \msW_1^{\perp})$, where $\beta\colon M_2 \isoarrow  M_2^{\vee\vee}$ is the standard evaluation isomorphism.
\end{thm}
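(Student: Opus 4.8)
The plan is to compute both sides directly from Definition \ref{defn:CTP} using one coordinated choice of auxiliary cochains, and then to show that the difference $\CTP_E(\phi,\psi)-\CTP_{E^\vee}(\psi,\beta(\phi))$ is a sum over places of local invariants of two things: the restriction of a single global $2$-cocycle, and a cup product of two classes lying in $\msW$ and $\msW^{\perp}$. The first contribution vanishes by reciprocity for the Brauer group (as used in the proof of Proposition \ref{prop:well_def}), and the second vanishes because $\msW^{\perp}$ is by construction the orthogonal complement of $\msW$ under the local pairing \eqref{eq:loc_pairing_2}.

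Concretely, I first fix cocycles $\overline\phi\in Z^1(G_F,M_2)$ and $\overline\psi\in Z^1(G_F,M_1^\vee)$ together with global lifts $f\in C^1(G_F,M)$ satisfying $\pi\circ f=\overline\phi$ and $g\in C^1(G_F,M^\vee)$ satisfying $\iota^\vee\circ g=\overline\psi$. Setting $a=\iota^{-1}(df)\in Z^2(G_F,M_1)$ and $b=(\pi^\vee)^{-1}(dg)\in Z^2(G_F,M_2^\vee)$, these are precisely the degree-two cocycles produced when Definition \ref{defn:CTP} is applied to $E$ (computing $\CTP_E(\phi,\psi)$) and to $E^{\vee}$ (computing $\CTP_{E^{\vee}}(\psi,\beta(\phi))$), respectively. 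The evaluation pairing $\langle\,\cdot\,,\,\cdot\,\rangle_M\colon M\times M^\vee\to (F^s)^{\times}$ and its analogues satisfy the adjunction identities $\langle \iota x,y\rangle_M=\langle x,\iota^\vee y\rangle_{M_1}$ and $\langle x,\pi^\vee y\rangle_M=\langle \pi x,y\rangle_{M_2}$, so the Leibniz rule \eqref{eq:cobcup} yields the global cochain identity
\[
d\big(f\cup_M g\big)\;=\;a\cup_{M_1}\overline\psi\;-\;\overline\phi\cup_{M_2}b .
\]
Next I invoke graded-commutativity of the cup product: since $\overline\phi$ and $b$ are cocycles, $\overline\phi\cup_{M_2}b$ and $b\cup_{M_2^\vee}(\beta\circ\overline\phi)$ are cohomologous, so their difference is $dh$ for an explicit $2$-cochain $h$. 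As the cochains $\epsilon,\epsilon'$ of the two applications of Definition \ref{defn:CTP} satisfy $d\epsilon=a\cup_{M_1}\overline\psi$ and $d\epsilon'=b\cup_{M_2^\vee}(\beta\circ\overline\phi)$ (both existing by \eqref{eq:H3_is_trivial}), the cochain $\Delta:=\epsilon-\epsilon'-f\cup_M g-h$ is then a global $2$-cocycle, and hence $\sum_v\inv_v(\Delta_v)=0$.

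The remaining work is the local comparison. With local lifts $\overline\phi_{v,M}$ (representing a class in $\msW$) and $\overline\psi_{v,M^\vee}$ (representing a class in $\msW^{\perp}$) chosen as in the two definitions, I expand $\gamma_v-\gamma'_v-\Delta_v$ after writing $f_v-\overline\phi_{v,M}=\iota(\widetilde a_v)$ and $g_v-\overline\psi_{v,M^\vee}=\pi^\vee(\widetilde b_v)$. The decisive cancellation is that the cross term $\iota(\widetilde a_v)\cup_M\pi^\vee(\widetilde b_v)$ vanishes identically, because $\iota^\vee\circ\pi^\vee=(\pi\circ\iota)^\vee=0$. Once the graded-commutativity homotopies are accounted for, the conclusion is that $\gamma_v-\gamma'_v-\Delta_v$ is, locally at $v$, cohomologous to $-\,\overline\phi_{v,M}\cup_M\overline\psi_{v,M^\vee}$. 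Summing local invariants and using additivity of $\inv_v$ on cocycles gives
\[
\CTP_E(\phi,\psi)-\CTP_{E^{\vee}}(\psi,\beta(\phi))=\sum_v\inv_v(\Delta_v)-\sum_v\inv_v\big(\overline\phi_{v,M}\cup_M\overline\psi_{v,M^\vee}\big),
\]
whose right-hand side is zero: the first sum vanishes by reciprocity, and the second is exactly the value of the local pairing \eqref{eq:loc_pairing_2} on $[(\overline\phi_{v,M})_v]\in\msW$ and $[(\overline\psi_{v,M^\vee})_v]\in\msW^{\perp}$, which are orthogonal by the definition of the duality functor (Definition \ref{defn:loc_pair}).

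I expect the main obstacle to be the cochain-level graded-commutativity bookkeeping appearing in both the global passage (the cochain $h$) and the local passage. The cup product is graded-commutative only up to a homotopy given by cup-$1$ products, and since $\widetilde b_v$ and $b$ are \emph{not} cocycles (indeed $d\widetilde b_v=b_v$), these homotopies carry correction terms; verifying that the global $h$, its local counterparts, and these corrections combine to leave only a genuine local coboundary — so that the clean formula above survives with the correct sign — is the delicate part, and is where I would spend the most care. Granting the duality identity, Proposition \ref{prop:right_kernel} then follows formally, since the right kernel of $\CTP_E$ equals the left kernel of $\CTP_{E^{\vee}}$, which is $\iota^{\vee}(\Sel\,M^{\vee})$ by Proposition \ref{prop:left_kernel} applied to $E^{\vee}$.
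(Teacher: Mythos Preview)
Your plan is correct and is essentially the paper's own argument, carried out in one pass rather than two. The paper introduces an intermediate pairing $\CTPb{E}$ (Definition~\ref{defn:CTPb}) computed from the tuple $(g,\eta,(\overline\psi_{v,M^\vee})_v)$ and splits the work as follows: Lemma~\ref{lem:weak_symmetry} shows $\CTP_E=\CTPb{E}$ using exactly your identity $d(f\cup_M g)=a\cup_{M_1}\overline\psi-\overline\phi\cup_{M_2}b$ together with the local cancellation down to $-\,\overline\phi_{v,M}\cup_M\overline\psi_{v,M^\vee}$ and the $\msW/\msW^\perp$ orthogonality; then the proof of Theorem~\ref{thm:dual_id} shows $\CTPb{E}(\phi,\psi)=\CTP_{E^\vee}(\psi,\beta(\phi))$ using only the graded-commutativity homotopy. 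This separation is precisely what lets the paper avoid the tangle you flag: each step involves a single explicit homotopy ($f\cup_M g$ in the first, an explicit $h(\sigma,\tau)=\langle\overline\phi(\sigma\tau),q(\sigma,\tau)\rangle$ and a local $a_v(\sigma)=-\langle\overline\phi_v(\sigma),b_v(\sigma)\rangle$ in the second), written down by hand rather than invoked as an abstract cup-$1$ product. If you want to keep your single-pass organization, I would recommend writing these cochains explicitly as the paper does; otherwise the ``bookkeeping'' you anticipate really does become unpleasant, because the local graded-commutativity correction for $\widetilde b_v$ (which is not a cocycle) interacts with the restriction $h_v$ of the global one.
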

\begin{proof}[Proof of Proposition \ref{prop:right_kernel}]
By Theorem \ref{thm:dual_id}, the left kernel of $\CTP_E$ equals the left kernel of $\CTP_{E^{\vee}}$, which is $\iota^{\vee}(\Sel\, M^{\vee})$ by Proposition \ref{prop:left_kernel}.
\end{proof}

A key ingredient  in the proof of the duality identity is the following alternative construction for the Cassels--Tate pairing. This construction will also appear in the direct proof of Proposition \ref{prop:right_kernel} to follow.
\begin{defn}
\label{defn:CTPb}
Take $E$ to be the exact sequence in $\SMod_F$ considered in Definition \ref{defn:CTP}. Suppose we are given $\phi$ in $\Sel\, M_2$ and $\psi$ in $\Sel\, M_1^{\vee}$. As before, choose cocycles $\overline{\phi} \in Z^1(G_{F}, M_2)$ and $\overline{\psi} \in Z^1(G_{F}, M_1^{\vee})$ representing this pair of Selmer elements.

Instead of choosing $\big(f, \,\epsilon,\, (\overline{\phi}_{v, M})_v\big)$ as before, choose $\big(g,\, \eta, \,(\overline{\psi}_{v,M^\vee})_v\big)$ as follows:
\begin{itemize} 
\item Choose a cochain $g\in C^1(G_{F},\,M^\vee)$ satisfying $\overline{\psi}=\iota^\vee \circ g;$
\item Choose a cochain  $\eta\in C^2(G_F,\, (\Fsep)^{\times})$ satisfying
\[ d\eta= \overline{\phi}\,\cup_{M_2} \,(\pi^{\vee})^{-1} (dg);\]
\item For each $v\in S$, choose  $\overline{\psi}_{v,M^\vee}$ in $Z^1(G_v,M^\vee)$ such that 
$\iota^\vee(\overline{\psi}_{v,M^\vee})=\overline{\psi}_v$
 and so that the tuple $(\overline{\psi}_{v,M^\vee})_{v\in S}$ represents a class in $\mathscr{W}^\perp$.
\end{itemize}

Then
\[\gamma_v^{\text{bis}} \,=\, \overline{\phi}_v \cup_{M_2} (\pi^{\vee})^{-1}\left(\overline{\psi}_{v,M^\vee}-g_v\right)\,-\,\eta_v\]
lies in $Z^2\left(G_v, (\Fsep)^{\times}\right)$ for each $v$, and we define the pairing by
\[\CTPb{E}(\phi, \psi) = \sum_{v \text{ of }F} \inv_v(\gamma^{\text{bis}}_v).\]
\end{defn}
 
\begin{lem} \label{lem:weak_symmetry}
The pairing $\CTPb{E}$ given in Definition \ref{defn:CTPb} is well-defined, and we have 
\begin{equation} \label{eq:alternative_C_T}
\CTPb{E} \,=\, \CTP_E.
\end{equation}
\end{lem}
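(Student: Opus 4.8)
The plan is to build a single global $2$-cochain that simultaneously accounts for both $\epsilon$ and $\eta$, and then to compare $\gamma_v$ and $\gamma_v^{\textup{bis}}$ place by place. Fixing cocycle representatives $\overline{\phi}, \overline{\psi}$, I would choose the data $\big(f,\epsilon,(\overline{\phi}_{v,M})_v\big)$ computing $\CTP_E$ and the data $\big(g,\eta,(\overline{\psi}_{v,M^\vee})_v\big)$ computing $\CTPb{E}$, and then study the global cochain $f \cup_M g \in C^2(G_F,(\Fsep)^{\times})$, where throughout I keep the $M$-valued factor on the left and use the cochain-level adjunctions $a \cup_{M_1} \iota^{\vee}(b) = \iota(a) \cup_M b$ and $\pi(a) \cup_{M_2} b = a \cup_M \pi^{\vee}(b)$ that define the dual morphisms (exactly as in the proof of Proposition \ref{prop:well_def}).

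First I would compute $d(f \cup_M g) = df \cup_M g - f \cup_M dg$ via the Leibniz rule \eqref{eq:cobcup}. Since $\pi \circ f = \overline{\phi}$ is a cocycle, $df$ takes values in $\iota(M_1)$, so the adjunction for $\iota^{\vee}$ together with $\iota^{\vee}(g)=\overline{\psi}$ gives $df \cup_M g = \iota^{-1}(df) \cup_{M_1} \overline{\psi} = d\epsilon$. Dually, $\iota^{\vee}(g) = \overline{\psi}$ forces $dg$ into $\ker \iota^{\vee} = \pi^{\vee}(M_2^{\vee})$, and the adjunction for $\pi^{\vee}$ with $\pi(f)=\overline{\phi}$ gives $f \cup_M dg = \overline{\phi} \cup_{M_2} (\pi^{\vee})^{-1}(dg) = d\eta$. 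Hence $\Delta := f \cup_M g - \epsilon + \eta$ is a global $2$-cocycle, and reciprocity for the Brauer group yields $\sum_v \inv_v(\Delta_v)=0$.

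Next I would rewrite both local terms through $\cup_M$ using the same adjunctions, obtaining $\gamma_v = (f_v - \overline{\phi}_{v,M})\cup_M g_v - \epsilon_v$ and $\gamma_v^{\textup{bis}} = f_v \cup_M (\overline{\psi}_{v,M^\vee} - g_v) - \eta_v$. Subtracting, substituting $-\epsilon_v + \eta_v = \Delta_v - f_v \cup_M g_v$, and regrouping leads to the clean identity
\[
\gamma_v - \gamma_v^{\textup{bis}} \,=\, (f_v - \overline{\phi}_{v,M}) \cup_M (g_v - \overline{\psi}_{v,M^\vee}) \,-\, \overline{\phi}_{v,M} \cup_M \overline{\psi}_{v,M^\vee} \,+\, \Delta_v.
\]
The first term vanishes as a cochain: $f_v - \overline{\phi}_{v,M} \in \iota(M_1)$ while $g_v - \overline{\psi}_{v,M^\vee} \in \pi^{\vee}(M_2^{\vee})$, so the adjunctions collapse their cup product into a pairing through $\iota^{\vee}\pi^{\vee} = (\pi\iota)^{\vee} = 0$.

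Finally I would apply $\inv_v$ and sum over $v$. The $\Delta_v$ contribution dies by Brauer reciprocity, and the $\overline{\phi}_{v,M}\cup_M \overline{\psi}_{v,M^\vee}$ contribution dies because $(\overline{\phi}_{v,M})_v$ and $(\overline{\psi}_{v,M^\vee})_v$ represent classes in the mutually orthogonal subgroups $\msW$ and $\msW^{\perp}$ under the local pairing \eqref{eq:loc_pairing_2}. This gives $\CTP_E(\phi,\psi) = \CTPb{E}(\phi,\psi)$; well-definedness of $\CTPb{E}$ is then immediate, since the computation equates any admissible choice of $\CTPb{E}$-data with the value $\CTP_E(\phi,\psi)$ already known to be well-defined by Proposition \ref{prop:well_def}. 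I expect no conceptual obstacle here: the only real friction is bookkeeping — tracking the Leibniz sign and verifying each adjunction for $\iota^{\vee}$ and $\pi^{\vee}$ — and the one genuinely load-bearing observation is that the mixed local term cancels identically because $\iota^{\vee}\pi^{\vee}=0$.
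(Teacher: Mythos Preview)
Your argument is correct and follows essentially the same route as the paper's: both proofs compute $d(f\cup_M g)$ via the Leibniz rule to relate $\epsilon$ and $\eta$, rewrite $\gamma_v$ and $\gamma_v^{\textup{bis}}$ through $\cup_M$ using the $\iota^\vee$/$\pi^\vee$ adjunctions, and conclude by orthogonality of $\msW$ and $\msW^{\perp}$. The only cosmetic differences are that the paper makes the specific choice $\epsilon=\eta+f\cup_M g$ (so your $\Delta$ is identically zero) and lifts $\overline{\psi}_v$ via $\overline{\psi}_{v,M^\vee}$ rather than $g_v$ when rewriting $\gamma_v$, which avoids your mixed term $(f_v-\overline{\phi}_{v,M})\cup_M(g_v-\overline{\psi}_{v,M^\vee})$ altogether.
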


\begin{proof}
Choose any tuple 
\[\left(\overline{\phi},\,\overline{\psi},\, g,\, \eta, \,(\overline{\psi}_{v,M^\vee})_{v\text{ of }F}\right)\]
for computing $\CTPb{E}(\phi, \psi)$. Defining the cocycle $\gamma^{\text{bis}}_v$ for each $v$ as in Definition \ref{defn:CTPb}, we will show
\begin{equation}
\label{eq:bis_lem}
\CTP_E(\phi, \psi) = \sum_v \inv_v(\gamma^{\text{bis}}_v).
\end{equation}
This will show that the pairing $\CTPb{E}$ is well-defined and that it satisfies \eqref{eq:alternative_C_T}. 

To begin, choose $f$ and $(\overline{\phi}_{v,M})_v$ as in Definition \ref{defn:CTP}. Since 
\begin{align*}
d(f \cup_M g)\,&=\,df \cup_Mg -f \cup_M dg\\
&=\,\iota^{-1}(df) \cup_{M_1} \overline{\psi}\,-\,\overline{\phi} \cup_{M_2} (\pi^{\vee})^{-1}(dg),
\end{align*}
the assignment $\epsilon = \eta+ f\cup_Mg$ satisfies the requirements of Definition \ref{defn:CTP}. Defining the cocycle $\gamma_v$ for each $v$ as in Definition \ref{defn:CTP}, we have
\begin{align*}
&\gamma_v \,=\,  (f_v-\overline{\phi}_{v,M})\cup_M \overline{\psi}_{v,M^\vee} \,-\, \epsilon_v \quad\text{and}\\
&\gamma^{\text{bis}}_v  \,=\, f_v \cup_M (\overline{\psi}_{v,M^\vee} - g_v) \,-\, \eta_v.
\end{align*}
Thus
\begin{align*}
\gamma_v - \gamma^{\text{bis}}_v \,&=\, (f_v-\overline{\phi}_{v,M})\cup_M \overline{\psi}_{v,M^\vee}\,+\,f_v \cup_M (g_v-\overline{\psi}_{v,M^\vee})\,-\, f_v\cup_Mg_v \\
&= \,- \overline{\phi}_{v,M}\cup_M \overline{\psi}_{v,M^\vee}.
\end{align*}
We then have
\begin{align*}
 \sum_v \inv_v(\gamma_v - \gamma_v^{\text{bis}})  &=\, -\sum_v \inv_v(\overline{\phi}_{v,M}\cup_M \overline{\psi}_{v,M^\vee}).
\end{align*}
Since $(\overline{\phi}_{v,M})_{v}$ represents a class in $\mathscr{W}$ while $(\overline{\psi}_{v,M^\vee})_{v}$ represents a class in $\mathscr{W}^\perp$, this last sum vanishes. This establishes \eqref{eq:bis_lem}, and the lemma is proved.
\end{proof}

\begin{proof}[Direct proof of Proposition \ref{prop:right_kernel}]
It is clear from the definition of $\CTPb{E}$ that the right kernel of this pairing contains $\iota^{\vee}(\Sel\, M^{\vee})$. By  Lemma \ref{lem:weak_symmetry}, the right kernel of $\CTP_E$  contains $\iota^{\vee}(\Sel\, M^{\vee})$ also.

We thus have a bilinear pairing
\begin{equation}
\label{eq:rk_cmpt}
\CTP_E \,\colon\, \frac{\Sel\, M_2}{\pi(\Sel\, M)} \,\times \, \frac{\Sel\, M_1^{\vee}}{\iota^{\vee}(\Sel\, M^{\vee})} \to \QQ/\Z
\end{equation}
whose left kernel is zero. Applying the same argument to $E^{\vee}$ defines a bilinear pairing
\begin{equation}
\label{eq:rk_cmpt_2}
\CTP_{E^{\vee}} \,\colon\,  \frac{\Sel\, M_1^{\vee}}{\iota^{\vee}(\Sel\, M^{\vee})} \,\times \, \frac{\Sel\, M_2}{\pi(\Sel\, M)} \to \QQ/\Z
\end{equation}
whose left kernel is zero.

From \eqref{eq:rk_cmpt}, we have
\[\#\frac{\Sel\, M_2}{\pi(\Sel\, M)} \,\le \, \# \frac{\Sel\, M_1^{\vee}}{\iota^{\vee}(\Sel\, M^{\vee})}.\]
The pairing \eqref{eq:rk_cmpt_2} gives the reverse inequality, forcing \eqref{eq:rk_cmpt} to be perfect.
\end{proof}

We now prove the duality identity, which finishes the proof of Theorem \ref{thm:CTP_intro} .

\begin{proof}[Proof of Theorem \ref{thm:dual_id}]
From Lemma \ref{lem:weak_symmetry}, it suffices to show that the pairing $\CTPb{E}$ given in Definition \ref{defn:CTPb} satisfies
\begin{equation}
\label{eq:ugly_cochain}
\CTPb{E}(\phi, \psi) = \CTP_{E^{\vee}}(\psi, \beta(\phi))
\end{equation}
for all $\phi$ in $\Sel  (M_2,\, \msW_2)$ and all $\psi$ in $\Sel(M_1^{\vee},\, \msW_1^{\perp})$.  

Choose any tuple 
\[\left(\overline{\phi},\,\overline{\psi},\, g,\, \eta, \,(\overline{\psi}_{v,M^\vee})_{v\text{ of }F}\right)\]
for computing $\CTPb{E}(\phi, \psi)$. Given any $\epsilon \in C^2(G_F, (\Fsep)^{\times})$ satisfying 
\[d\epsilon = (\pi^{\vee})^{-1}(dg) \cup_{M_2} \overline{\phi},\]
the tuple
\[\left(\overline{\psi},\,\beta \circ \overline{\phi},\, g,\, \epsilon, \,(\overline{\psi}_{v,M^\vee})_{v}\right)\]
gives an acceptable set of choices for calculating  $\CTP_{E^{\vee}}(\psi, \beta(\phi))$. Our goal is to construct an explicit such $\epsilon$. 

To ease notation, take $q$ as shorthand for $ (\pi^{\vee})^{-1}(dg)$.  Consider the cochain $h$ in $C^2(G_F,(\Fsep)^{\times})$ defined by
\[h(\sigma, \tau) = \left\langle \overline{\phi}(\sigma\tau) , \, q(\sigma, \tau) \right\rangle,\]
where $\left \langle ~,~\right \rangle$  is   the standard pairing between $M_2$ and $M_2^{\vee}$. We calculate
\begin{align*}
dh(\sigma, \tau, \rho) =\,& \left\langle\sigma\overline{\phi}(\tau\rho), \, \sigma q(\tau, \rho)\right\rangle\, -\, \left\langle\overline{\phi}(\sigma\tau\rho), \,q(\sigma\tau, \rho)\right\rangle \\
+\,&\left\langle  \overline{\phi}(\sigma\tau\rho), \,q(\sigma, \tau\rho)\right\rangle \,-\, \left\langle  \overline{\phi}(\sigma\tau), \,q(\sigma, \tau)\right\rangle\\
= \,&- \left\langle  \overline{\phi}(\sigma), \,\sigma q(\tau, \rho) \right\rangle\, + \left\langle \sigma\tau \overline{\phi}(\rho), \,q(\sigma, \tau) \right\rangle\\
=\,& \left(q \cup_{M_2}\beta(\overline{\phi}) \,-\,  \overline{\phi}\cup_{M_2} q \right)(\sigma, \tau, \rho)
\end{align*}
with the second equality following from bilinearity and the relations
\begin{align*}
&q(\sigma, \tau\rho) - q(\sigma\tau, \rho) = q(\sigma, \tau) - \sigma q(\tau, \rho)\quad\text{and}\\
&\overline{\phi}(\sigma\tau\rho) = \sigma \overline{\phi}(\tau\rho) + \overline{\phi}(\sigma) = \sigma\tau\overline{\phi}(\rho) + \overline{\phi}(\sigma\tau).
\end{align*}
We  may thus take $\epsilon = \eta + h$. Defining $\gamma^{\text{bis}}_v$ as in Definition \ref{defn:CTPb}, defining $\gamma_v$ for the pairing $\CTP_{E^{\vee}}(\psi, \beta(\phi))$ as in Definition \ref{defn:CTP}, and using the shorthand
\[b_v =  (\pi^{\vee})^{-1}\left(\overline{\psi}_{v,M^\vee}-g_v\right),\]
we have
\[\gamma^{\text{bis}}_v - \gamma_v \,\,=\,\, \overline{\phi}_v \cup_{M_2} b_v \,+ \,b_v \,\cup_{M_2} \,\overline{\psi}_v  \,+\, h_v.\]
Taking $a_v$ to be the cochain in $C^1(G_v, (\Fsep_v)^{\times})$ defined by
\[a_v(\sigma) = -\left\langle \overline{\phi}_v(\sigma) ,\,\, b_v(\sigma)\right\rangle\quad\text{for } \, \sigma\in G_v,\]
a similar calculation to the one given before gives
\[da_v = \gamma^{\text{bis}}_v - \gamma_v.\]
We then have
\[\CTPb{E}(\phi, \psi) - \CTP_{E^{\vee}}(\psi, \beta(\phi)) = \sum_v \inv_v(da_v) = 0,\]
giving the theorem.
\end{proof}

\section{The category $\SMod_F$}
\label{sec:category}
The naturality property of  Theorem \ref{thm:CTP_intro} (ii) gives a flexible approach for comparing the Cassels--Tate pairings associated to different short exact sequences in $\SMod_F$. This property also motivates the study of $\SMod_F$ as a category, which we turn to now.

\subsection{The quasi-abelian property}
\label{ssec:qab}
The category $\SMod_F$ is not abelian, as epic monic morphisms may fail to be isomorphisms. However, we will verify that the category is quasi-abelian. This gives us access to many of the same tools available for abelian categories.

We start with some terminology. Call a morphism $f: (M, \msW) \to (M', \msW')$ strictly monic if it is the kernel of some morphism in $\SMod_F$, and call it strictly epic if it is the cokernel of some morphism in $\SMod_F$. Equivalently, $f$ is strictly monic if it is injective in the category of $G_F$-modules and $f^{-1}(\msW') = \msW$, and it is strictly epic if it is surjective in the category of $G_F$-modules and $f(\msW) = \msW'$. It is easily checked from the kernel/cokernel definition that $f$ is strictly monic if and only if $f^{\vee}$ is strictly epic.

Consider a pair of morphisms
\[(M_1, \msW_1) \xrightarrow{\,\,\iota\,\,} (M, \msW) \xrightarrow{\,\, \pi\,\,} (M_2, \msW_2),\]
and suppose that this yields an exact sequence in the category of $G_F$-modules. In the category $\SMod_F$, we see that $\iota$ is the kernel of $\pi$ if and only if it is strictly monic; and we see that $\pi$ is the cokernel of $\iota$ if and only if it is strictly epic. From this, we find that  $\SMod_F$ has all kernels and cokernels. It also has finite products, and is hence preabelian.

There are many equivalent ways of characterizing which preabelian categories are quasi-abelian (see  e.g. \cite{Schn99, Buhl10}). For this work, we  use the notion of pullbacks and pushouts of  short exact sequences.

\begin{defn}
\label{defn:pullback}
Choose an exact sequence  
\[E \,=\, \left[0 \to M_1 \xrightarrow{\,\,\iota\,\,} M \xrightarrow{\,\,\pi\,\,}M_2 \to 0\right]\]
  in $\SMod_F$. Given a morphism $f \colon N_2 \to M_2$ in $\SMod_F$, we define the pullback of $E$ along $f$ as follows. First, take $N$ be the pullback of the morphisms $f$ and $\pi$. There is then a unique morphism $\iota_N \colon M_1 \to N$  whose composition with $f$ is $\iota$ and whose composition with $\pi$ is zero.  Writing $\pi_N$ for the morphism from $N$ to $N_2$, we then have a commutative diagram
\[\begin{tikzcd}
0 \arrow{r} & M_1 \arrow[d, equals] \arrow{r}{\iota_N} &  N \arrow{d} \arrow{r}{\pi_N} & N_2 \arrow{d}{f} \arrow{r} & 0\\
0 \arrow{r} & M_1 \arrow{r}{\iota} & M \arrow{r}{\pi} & M_2 \arrow{r} & 0
\end{tikzcd}\]
for which the right square is cartesian. We define the pullback of $E$ along $f$ to be the top row of this diagram.

Dually, given $g \colon M_1 \to N_1$, there is a unique commutative diagram
\[\begin{tikzcd}
0 \arrow{r} & M_1 \arrow{d}{g} \arrow{r}{\iota} &  M \arrow{d} \arrow{r}{\pi} & M_2 \arrow[d, equals] \arrow{r} & 0\\
0 \arrow{r} & N_1 \arrow{r} & N \arrow{r}& M_2 \arrow{r} & 0
\end{tikzcd}\]
for which the composition of the lower two maps is trivial and for which the left square is cocartesian. We define the pushout of $E$ along $g$ to be the bottom row of this diagram.
\end{defn}
\begin{prop}
\label{prop:is_quasi}
The category $\SMod_F$ is quasi-abelian. That is to say, given a short exact sequence $E$ in $\SMod_F$ as in Definition \ref{defn:pullback}, the pullback of $E$ along any morphism to $M_2$ is a short exact sequence; and dually, the pushout of $E$ along any morphism from $M_1$ is a short exact sequence.
\end{prop}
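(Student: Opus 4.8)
The plan is to reduce the short exactness of the pulled-back sequence to a single nontrivial assertion --- that the pullback of a strictly epic morphism is again strictly epic --- and to prove that assertion by a Mayer--Vietoris argument at the level of restricted products of local cohomology. The pushout statement will then follow formally: the functor $\vee$ is a contravariant, exactness-preserving anti-equivalence that interchanges strictly monic and strictly epic morphisms and sends the pushout of $E$ along any $g\colon M_1 \to N_1$ to the pullback of $E^{\vee}$ along $g^{\vee}$, so short exactness of all pullbacks yields short exactness of all pushouts.

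First I would make the pullback explicit. Since $\SMod_F$ is preabelian, $N$ is the kernel of the morphism $\pi - f \colon (M,\msW)\oplus(N_2,\msW_{N_2}) \to (M_2,\msW_2)$, where the product carries the local conditions $\msW \oplus \msW_{N_2}$. Writing $g_N\colon N \to M$ and $\pi_N\colon N \to N_2$ for the two projections, the kernel description of local conditions for a strictly monic inclusion gives
\[
\msW_N \,=\, \{\, c \in \resprod_v H^1(G_v, N) \,:\, g_N(c) \in \msW \text{ and } \pi_N(c) \in \msW_{N_2}\,\}.
\]
The underlying module sequence $0 \to M_1 \xrightarrow{\iota_N} N \xrightarrow{\pi_N} N_2 \to 0$ is exact by the usual properties of module pullbacks. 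For the condition $\iota_N^{-1}(\msW_N) = \msW_1$ I would simply observe that $g_N \iota_N = \iota$ and $\pi_N \iota_N = 0$, so that $\iota_N^{-1}(\msW_N) = \iota^{-1}(\msW) = \msW_1$ by exactness of $E$; thus $\iota_N$ is automatically strictly monic and all that remains is the strictly epic condition for $\pi_N$.

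The crux is therefore $\pi_N(\msW_N) = \msW_{N_2}$, where $\subseteq$ is immediate. For $\supseteq$, take $w \in \msW_{N_2}$. Since $f$ is a morphism and $E$ is exact, $f(w) \in \msW_2 = \pi(\msW)$, so I may choose $u \in \msW$ with $\pi(u) = f(w)$. The cartesian square yields a short exact sequence of $G_F$-modules
\[
0 \to N \to M \oplus N_2 \xrightarrow{\,\pi - f\,} M_2 \to 0,
\]
whose associated Mayer--Vietoris sequence shows that the pair $(u,w)$, lying in the kernel of $\pi - f$, lifts to a class $c \in \resprod_v H^1(G_v, N)$ with $g_N(c) = u$ and $\pi_N(c) = w$. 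Such a $c$ lies in $\msW_N$ by construction and satisfies $\pi_N(c) = w$, establishing $\supseteq$.

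The main obstacle is carrying out this Mayer--Vietoris step at the level of restricted products rather than place by place, which requires the lift $c$ to be unramified at all but finitely many $v$. At each place the local sequence provides a lift $c_v$, well-defined modulo the image of $H^1(G_v, M_1)$. At the cofinitely many places where $I_v$ acts trivially on $M_1, M, N_2$ and $M_2$, the short exact sequence $0 \to N \to M\oplus N_2 \to M_2 \to 0$ remains exact on $I_v$-invariants, so the unramified subgroups fit into the long exact sequence for $G_v/I_v$-cohomology; exactness at the middle term then furnishes an unramified lift $c_v \in H^1_{\text{ur}}(G_v, N)$ of the unramified pair $(u_v, w_v)$. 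Choosing these lifts at the good places and arbitrary lifts at the finitely many remaining places places $c$ in the restricted product, completing the argument.
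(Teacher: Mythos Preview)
Your proof is correct and follows essentially the same route as the paper: both arguments reduce to showing $\pi_N(\msW_N) = \msW_{N_2}$, and both do so by lifting a pair $(u,w)$ through the Mayer--Vietoris sequence $0 \to N \to M \oplus N_2 \to M_2 \to 0$ at the level of restricted products of local $H^1$. Your treatment is slightly more explicit than the paper's in two respects: you spell out the unramified lifting at good places (the paper simply asserts the long exact sequence for restricted products), and you deduce the pushout case from the pullback case via the duality functor $\vee$ rather than saying ``similarly''.
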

\begin{proof}
We will verify the condition for pullbacks. The proof for pushouts is similar.

Take $E$ and $f$ as in Definition \ref{defn:pullback}.  We will write the local conditions for $M$, $M_1$, $M_2$ as $\msW$, $\msW_1$, and $\msW_2$ respectively, and write the local conditions for $N$ and $N_2$ as $\msW'$ and $\msW'_2$ respectively. We will show that the pullback of $E$ along $f$ is short exact.

In the exact sequence
\[0 \to M_1 \xrightarrow{\,\,\iota_N\,\,} N  \xrightarrow{\,\,\pi_N\,\,} N_2 \to 0,\]
one can show that the morphism $\iota_N$ is the kernel of $\pi_N$ using the universal properties defining kernels and pullbacks. So we just need to show that $\pi_N$ is a strict epimorphism. As a map of $G_F$-modules, it is a pullback of a surjective map, and is hence surjective. So we just need to show that
\begin{equation}
\label{eq:quasi_epi}
\pi_N(\msW') = \msW'_2.
\end{equation}
Take $\Phi'_2$ in $\msW'_2$. We wish to show that there is $\Phi'$ in $\msW'$ satisfying $\pi_N(\Phi') = \Phi'_2$.

We have a short exact sequence
\begin{equation}
\label{eq:uglypullback}
0 \to N \to N_2 \oplus M \xrightarrow{\,-f \oplus \pi\,} M_2 \to 0
\end{equation}
in $\SMod_F$. A short portion of the associated long exact sequence sequence is
\begin{equation}
\label{eq:longshort}
\resprod_v H^1(G_v, N) \to \resprod_v H^1(G_v, N_2 \oplus M) \to \resprod_v H^1(G_v, M_2).
\end{equation}
 Since $\pi(\msW) = \msW_2$, there is some $\Phi \in \msW$ so that $\pi(\Phi)$ equals $f(\Phi'_2)$. From \eqref{eq:longshort}, $(\Phi'_2, \Phi)$ is the image of some $\Phi'$ in $\tresprod{v} H^1(G_v, N)$. This $\Phi'$ is in $\msW$ since the first map of \eqref{eq:uglypullback} is strictly monic, giving \eqref{eq:quasi_epi}. 
\end{proof}

We note that pullbacks and pushouts commute. Specifically, given $E$, $f$, and $g$ as above, we have a diagram
\begin{equation}
\label{eq:pb_po}
\begin{tikzcd}[row sep=4pt, column sep=9pt]
&M_1\arrow[dl] \arrow[dd, equal] \arrow[rr]&& N_{\text{pb}} \arrow[dl] \arrow[rr] \arrow[dd] & &  N_2 \arrow[dl, equals] \arrow[dd] \\
N_1 \arrow[rr, crossing over]&& N \arrow[rr, crossing over] & & N_2 \\
& M_1 \arrow[dl]  \arrow[rr]&& M\arrow[dl] \arrow[rr]  & & M_2 \arrow[dl, equals] \\
N_1\arrow[from=uu, crossing over, equal]\arrow[rr] && N_{\text{po}} \arrow[from=uu, crossing over] \arrow[rr] & & M_2, \arrow[from=uu, crossing over]\\
\end{tikzcd}
\end{equation}
where the front and back faces are pullbacks of exact sequences by $f$ in $\SMod_F$, and the top and bottom faces are pushouts by $g$. This can be seen by constructing the diagram's front and bottom faces and applying \cite[Proposition 3.1]{Buhl10} to the resultant diagram
\[\begin{tikzcd}
0 \arrow{r} & M_1 \arrow{r}\arrow{d}{g} & N_{\text{pb}} \arrow{d}\arrow{r} & N_2 \arrow{d}{f} \arrow{r} & 0\\
0 \arrow{r} & N_1 \arrow{r} & N_{\text{po}} \arrow{r} & M_2 \arrow{r} & 0.
\end{tikzcd}\]

Pullbacks and pushouts give a rich source of diagrams to which naturality can be applied. One of the most interesting cases of this concerns Baer sums. Whilst typically   defined in abelian categories, their definition still works in quasi-abelian categories, as was originally noted in \cite{Rich77}.

\begin{defn}[Baer sum]
\label{defn:baer}
Given short exact sequences
\begin{alignat*}{2}
&E_a\, &&=\, \left[0 \to M_1 \xrightarrow{\,\,\iota_a\,\,} M_a \xrightarrow{\,\,\pi_a\,\,} M_2 \to 0\right] \quad\text{and}\\
&E_b\,&&=\, \left[ 0 \to M_1 \xrightarrow{\,\,\iota_b\,\,} M_b\, \xrightarrow{\,\,\pi_b\,\,} M_2 \to 0\right]
\end{alignat*}
in $\SMod_F$, consider the commutative diagram with exact rows
\begin{equation}
\label{eq:def_Baer}
\begin{tikzcd}
& 0 \arrow{r} & M_1 \oplus M_1 \arrow{r}{\iota_a \oplus \iota_b} & M_a \oplus M_b \arrow{r}{\pi_a \oplus \pi_b}  & M_2 \oplus M_2  \arrow{r}  & 0 \\
&0 \arrow{r}& M_1 \oplus M_1 \arrow{r} \arrow[u, equals] \arrow{d}{\nabla}&N \arrow{r} \arrow{u} \arrow{d}& M_2 \arrow{u}{\Delta} \arrow[d, equals]\arrow{r} &0\\
& 0 \arrow{r}& M_1\arrow{r} & M \arrow{r}& M_2 \arrow{r}&0,
\end{tikzcd}
\end{equation}
defined so the central row is the pullback of the top row along the diagonal morphism $\Delta \colon M_2 \to M_2 \oplus M_2$, and the bottom row is the pushout of the central row along the codiagonal morphism $\nabla\colon  M_1 \oplus M_1 \to M_1$. We then define the Baer sum $E_a + E_b$ of $E_a$ and $E_b$ to be the final row of this diagram. Taking $\text{Ext}(M_2, M_1)$ to be the set of isomorphism classes of short exact sequences in $\SMod_F$, this set forms an abelian group under Baer sum.  
\end{defn}

\begin{prop}[Trilinearity]
\label{prop:Baer}
Given $M_1$ and $M_2$ in $\SMod_{F}$, the Cassels--Tate pairing satisfies
\[\CTP_{E_a + E_b} = \CTP_{E_a} + \CTP_{E_b}\]
for all $E_a, E_b$ in $\textup{Ext}\left(M_2, M_1\right)$.
\end{prop}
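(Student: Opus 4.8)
The plan is to deduce the identity entirely from the naturality property of Theorem \ref{thm:CTP_intro}(ii), applied to the two morphisms of short exact sequences built into the Baer sum diagram \eqref{eq:def_Baer}. The quasi-abelian property (Proposition \ref{prop:is_quasi}) guarantees that the pullback and pushout rows occurring there are genuine short exact sequences in $\SMod_F$, so naturality is available for the vertical morphisms relating them. Throughout, write $E'$ for the central row of \eqref{eq:def_Baer}.

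First I would record the behaviour of the pairing on direct sums. For $E_a \oplus E_b = [0 \to M_1 \oplus M_1 \to M_a \oplus M_b \to M_2 \oplus M_2 \to 0]$ there are canonical identifications $\Sel(M_2 \oplus M_2) \cong \Sel M_2 \oplus \Sel M_2$ and $\Sel((M_1 \oplus M_1)^\vee) \cong \Sel M_1^\vee \oplus \Sel M_1^\vee$, and I claim
\[\CTP_{E_a \oplus E_b}\big((\phi_1, \phi_2),\, (\psi_1, \psi_2)\big) = \CTP_{E_a}(\phi_1, \psi_1) + \CTP_{E_b}(\phi_2, \psi_2).\]
This is immediate from Definition \ref{defn:CTP}: every piece of cochain data (the lift of $\overline{\phi}$, the cochain $\epsilon$, and the local cocycles) can be chosen compatibly with the direct sum decomposition, and since the duality pairing between $M_1 \oplus M_1$ and its dual is the orthogonal sum of the two factor pairings, the relevant cup products carry no cross terms and the global invariant sum splits.

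Next I would feed the two vertical morphisms of \eqref{eq:def_Baer} into naturality. The top part exhibits $E'$ as the pullback of $E_a \oplus E_b$ along the diagonal $\Delta \colon M_2 \to M_2 \oplus M_2$, a morphism $E' \to E_a \oplus E_b$ with $f_2 = \Delta$ and $f_1 = \textup{id}$, so $f_1^\vee = \textup{id}$. Naturality then gives, for $\phi \in \Sel M_2$ and $(\psi_1,\psi_2) \in \Sel M_1^\vee \oplus \Sel M_1^\vee$,
\[\CTP_{E'}\big(\phi,\, (\psi_1,\psi_2)\big) = \CTP_{E_a \oplus E_b}\big(\Delta(\phi),\, (\psi_1, \psi_2)\big) = \CTP_{E_a}(\phi, \psi_1) + \CTP_{E_b}(\phi, \psi_2),\]
the last equality by the direct-sum formula. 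The lower part exhibits $E_a + E_b$ as the pushout of $E'$ along the codiagonal $\nabla \colon M_1 \oplus M_1 \to M_1$, a morphism $E' \to E_a + E_b$ with $g_1 = \nabla$ and $g_2 = \textup{id}$. The one genuinely arithmetic input here is the elementary check that the dual of the codiagonal is the diagonal, $g_1^\vee = \nabla^\vee \colon M_1^\vee \to M_1^\vee \oplus M_1^\vee$, $\chi \mapsto (\chi, \chi)$. Naturality applied to this morphism yields, for $\chi \in \Sel M_1^\vee$,
\[\CTP_{E_a + E_b}(\phi, \chi) = \CTP_{E'}\big(\phi,\, g_1^\vee(\chi)\big) = \CTP_{E'}\big(\phi,\, (\chi, \chi)\big).\]
Combining the two displays gives $\CTP_{E_a+E_b}(\phi,\chi) = \CTP_{E_a}(\phi,\chi) + \CTP_{E_b}(\phi,\chi)$, as required.

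I expect no serious obstacle: the result is a formal consequence of naturality once the Baer sum is unwound into a pullback along $\Delta$ and a pushout along $\nabla$. The steps demanding care are purely bookkeeping — correctly identifying the vertical maps $f_i$ and $g_i$ and confirming they occupy the slots demanded by Theorem \ref{thm:CTP_intro}(ii), and verifying the direct-sum decomposition of the pairing. The most error-prone point is tracking the duals: one must confirm that $\nabla^\vee$ is the diagonal on $M_1^\vee$, since it is precisely this fact that turns the pushout into the ``add the two contributions'' operation that matches the earlier pullback along $\Delta$.
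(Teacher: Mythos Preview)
Your proposal is correct and follows precisely the paper's own approach: establish the direct-sum decomposition of $\CTP_{E_a \oplus E_b}$, then apply naturality (Theorem \ref{thm:CTP_intro}(ii)) to the two vertical morphisms in the Baer sum diagram \eqref{eq:def_Baer}. Your write-up is simply a more detailed unpacking of what the paper states in two sentences.
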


\begin{proof} 
Given $\phi$ in $\Sel\,M_2$ and $\psi$ in $\Sel\, M_1$, we have
\[\text{CTP}_{E_a \oplus E_b}\big( (\phi, \phi), \, (\psi,\psi)\big) = \text{CTP}_{E_a}(\phi, \psi) + \text{CTP}_{E_b}(\phi, \psi).\]
We can then derive the proposition by applying the naturality property of Theorem \ref{thm:CTP_intro} (ii) to the two morphisms of short exact sequences in \eqref{eq:def_Baer}.
\end{proof}

\subsection{The pairing for infinite modules}
\label{ssec:infinite}
One major point of this paper is that there is value in considering the Cassels--Tate pairing as a pairing on Selmer groups of finite Galois modules. That said, it is undeniably convenient to be able to define the pairing for infinite Galois modules as in \cite{Flach90}.

We will focus on $\Z_{\ell}$-modules rather than $\widehat{\Z}$-modules. This is a harmless decision, as a $\widehat{\Z}$-module breaks into $\ell$-adic components, and a choice for local conditions for the entire module splits as a collection of choices for local conditions of its $\ell$-adic components.
\begin{defn}
\label{defn:SModFinf}
Choose a global field $F$ and a rational prime $\ell$ not equal to the characteristic of $F$. Given a topological $\Z_{\ell}$-module $M$ with a continuous $\mathbb{Z}_{\ell}$-linear action of $G_F$, we call $M$ \emph{acceptable} if
\begin{itemize}
\item $M$ is Hausdorff,
\item $M$ contains an open compact $\Z_{\ell}[G_F]$-submodule $U$ such that both $U$ and the Pontryagin dual of $M/U$ are finitely generated $\Z_{\ell}$-modules, and
\item $M^{I_v} = M$ for all but finitely many places $v$ of $F$.
\end{itemize}
\color{red}

\color{black}

Given an acceptable $M$ and a closed subgroup $H$ of $G_F$, we define $H^1(H, M)$ using the continuous cochain cohomology of \cite{Tate76}. Our reference for these groups is \cite[II.7.1]{Neuk08}. We then take $\SMod_{F, \ell}^{\infty}$ to be the category whose objects are tuples $(M, \msW)$, where $M$ is an acceptable module and $\msW$ is a subgroup of $\prod_{v \text{ of } F} H^1(G_v, M)$ of the form
\[\msW_S \times \prod_{v \not \in S} H^1_{\text{ur}}(G_v, M),\]
for some finite set $S$  of places of $F$ and $\Z_{\ell}$-submodule $\msW_S$  of $\prod_{v \in S} H^1(G_v, M)$. The morphisms in ths category from $(M,\msW)$ to $(M',\msW')$ will be the $\Z_{\ell}[G_F]$-linear homomorphisms $f:M\rightarrow M'$ such that $f(\msW)\subseteq \msW'$.
\end{defn}

\begin{rmk}
Given an acceptable $M$ and open compact submodule $U$ of $M$ as in Definition \ref{defn:SModFinf}, we see that $M$ is topologically isomorphic to the inverse limit $\varprojlim_{n}M/l^nU$ of  discrete torsion $\mathbb{Z}_l[G_F]$-modules. Further, $M$ is the union of the compact open $G_F$-submodules $l^{-n}U$.

In addition, every acceptable module $M$ has finite $\ell$-rank in the sense of \cite[Definition 2.5]{MR2329311}. In particular, from \cite[Lemma 2.8]{MR2329311} we see that, after forgetting the $G_F$-action, $M$ is topologically isomorphic to a module of the form
\[\mathbb{Z}_{\ell}^{n_1} \oplus \mathbb{Q}_{\ell}^{n_2}  \oplus \big(\mathbb{Q}_{\ell}/\mathbb{Z}_{\ell}\big)^{n_3}\oplus (\textup{finite abelian }{\ell}\textup{-group}),\]
for some integers $n_1,n_2, n_3\geq 0$. The $\Z_{\ell}$-linear homomorphisms between such modules are easy to describe, and are automatically continuous. We note in particular that in any short exact sequence 
\[0 \to M_1 \xrightarrow{\,\,\iota\,\,} M\xrightarrow{\,\,\pi\,\,} M_2 \to 0\]
of acceptable modules, the map $\pi$ admits a continuous set-section, and the topology on $M_1$ is induced from the injection into $M$. 
\end{rmk}

\begin{defn}
Given an acceptable module $M$, we define its dual by
\[M^{\vee} = \Hom_{\text{cont}}(M,\, (F^s)^{\times})\]
with the compact-open topology, where $(F^s)^{\times}$ is given the discrete topology. This module is isomorphic to the Pontryagin dual of $M$ as a topological abelian group, and is acceptable.

As in Definition \ref{defn:loc_pair}, for each place $v$ of $F$,  composition of cup-product and the local invariant map gives a pairing  
\begin{equation}
\label{eq:infinite_Tate}
H^1(G_v, M) \otimes H^1(G_v, M^{\vee}) \to \QQ_{\ell}/\Z_{\ell}.
\end{equation}

\begin{prop}
\label{prop:i_T_nd}
The pairing \eqref{eq:infinite_Tate} is non-degenerate.
\end{prop}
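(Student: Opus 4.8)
The plan is to reduce the statement to local Tate duality for finite modules, available via \cite[Theorem 2.1]{Tate63} (the very pairing used in Definition \ref{defn:loc_pair}), through a limit argument followed by a d\'evissage. Throughout I would work with all the cup-product pairings
\[H^i(G_v, M) \times H^{2-i}(G_v, M^\vee) \to H^2\big(G_v, (F_v^s)^{\times}\big) \xrightarrow{\,\inv_v\,} \QQ_{\ell}/\Z_{\ell}\qquad (i = 0,1,2),\]
and aim to show each is \emph{perfect}, in the sense that it identifies one factor with the Pontryagin dual $\Hom(-, \QQ_{\ell}/\Z_{\ell})$ of the other. Non-degeneracy of the case $i = 1$, which is what the proposition asks for, is then immediate, while the cases $i = 0, 2$ are needed only to feed the final five-lemma step.

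First I would treat the case where $M = U$ is compact and finitely generated over $\Z_{\ell}$. Here $U = \varprojlim_n U/\ell^n U$ with each $U/\ell^n U$ finite, so $U^\vee = \varinjlim_n (U/\ell^n U)^\vee$ is discrete torsion. The local cohomology groups $H^i(G_v, U/\ell^n U)$ are finite, so the inverse system $\{H^i(G_v, U/\ell^n U)\}_n$ is Mittag--Leffler, its ${\varprojlim}^1$ vanishes, and the continuous cochain cohomology of \cite{Tate76} (see \cite[II.7.1]{Neuk08}) gives $H^i(G_v, U) = \varprojlim_n H^i(G_v, U/\ell^n U)$; dually, cohomology commutes with the filtered colimit, so $H^{2-i}(G_v, U^\vee) = \varinjlim_n H^{2-i}(G_v, (U/\ell^n U)^\vee)$. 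Applying finite local Tate duality at each level, together with its functoriality in the transition maps, and using that Pontryagin duality interchanges $\varprojlim$ and $\varinjlim$, identifies $H^{2-i}(G_v, U^\vee)$ with $\Hom(H^i(G_v, U), \QQ_{\ell}/\Z_{\ell})$. This establishes perfect duality in all three degrees for compact $U$, and hence, after exchanging the roles of $M$ and $M^\vee$, for every discrete torsion module as well.

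Next I would handle a general acceptable $M$ by d\'evissage. By definition $M$ contains a compact open $\Z_{\ell}[G_F]$-submodule $U$ that is finitely generated over $\Z_{\ell}$ and for which the Pontryagin dual of $M/U$ is finitely generated; since $U$ is compact it absorbs the lattice part, so $M/U$ is discrete torsion (a divisible group plus a finite group). The previous step therefore supplies perfect pairings in all degrees for both $U$ and $M/U$, and likewise for the terms $(M/U)^\vee$ and $U^\vee$ of the dual exact sequence $0 \to (M/U)^\vee \to M^\vee \to U^\vee \to 0$. I would then compare the long exact cohomology sequence of $0 \to U \to M \to M/U \to 0$ with the Pontryagin dual of the long exact cohomology sequence of the dual sequence. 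Under the duality isomorphisms already established at the $U$- and $(M/U)$-terms, these become two exact sequences with matching outer terms fitting around $H^1(G_v,M)$, namely $H^0(G_v, M/U)$, $H^1(G_v, U)$, then $H^1(G_v, M/U)$, $H^2(G_v, U)$; the cup product makes the comparison ladder commute up to sign. The five lemma then forces the middle map $H^1(G_v, M) \to \Hom\big(H^1(G_v, M^\vee), \QQ_{\ell}/\Z_{\ell}\big)$ to be an isomorphism, giving the desired non-degeneracy.

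I expect the main obstacle to lie in the last paragraph: checking that the two long exact sequences are genuinely Pontryagin dual to one another and that the comparison ladder commutes. This rests on the compatibility of cup products with connecting homomorphisms --- the boundary maps for $M$ and for $M^\vee$ are adjoint under the pairing up to a sign --- a standard but sign-sensitive fact that I would cite from the cochain formalism of \cite{Neuk08} rather than reprove. By contrast the limit step is routine once the vanishing of ${\varprojlim}^1$ is recorded, and the passage between limits, colimits, and Pontryagin duals is the usual behaviour of duality for profinite and discrete groups.
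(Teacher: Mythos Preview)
Your approach is correct and takes a genuinely different route from the paper. The paper proceeds element by element and proves only the non-degeneracy actually stated: given a nonzero $\phi \in H^1(G_v, M)$, it first uses the preceding lemma (that $H^1(H, M) \cong \varprojlim_n H^1(H, M/U_n)$) to project to some $H^1(G_v, M/V)$ with $V$ compact open, reducing to the case that $M$ is discrete; it then lifts $\phi$ to $\phi_0 \in H^1(G_v, U)$ for a finite submodule $U \subseteq M$, applies finite local Tate duality to find $\psi_0 \in H^1(G_v, U^\vee)$ orthogonal to the image of the connecting map $H^0(G_v, M/U) \to H^1(G_v, U)$ but not to $\phi_0$, and finally checks via the dual connecting map and local duality that $\psi_0$ lifts to $H^1(G_v, M^\vee)$. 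Your limit-plus-five-lemma d\'evissage is more systematic and delivers the stronger conclusion of perfect duality in all three degrees simultaneously --- a fact the paper goes on to use immediately afterward --- at the cost of verifying the full sign-sensitive compatibility of cup products with both connecting homomorphisms, and of implicitly using that $H^i(G_v, U)$ is a finitely generated $\Z_\ell$-module so that abstract and continuous $\Hom(-,\QQ_\ell/\Z_\ell)$ agree in your limit interchange. The paper's hands-on argument sidesteps both of these by never needing exactness of the Pontryagin-dualized long exact sequence.
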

The proof of this proposition relies on the following lemma.

\begin{lem}
Take an acceptable module $M$, and take  compact open $G_F$-submodules $U_1 \supseteq U_2 \supseteq \dots$ of $M$ with zero intersection. Then  $M$ is topologically isomorphic to the inverse limit $\varprojlim_n M/U_n$, and the natural projections define an isomorphism
\begin{equation}
\label{eq:bye276}
H^1(H, M) \isoarrow \varprojlim_{n} H^1(H, M/U_n)
\end{equation}
for any closed subgroup $H$ of $G_F$.
\end{lem}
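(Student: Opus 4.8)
The plan is to prove the two assertions in turn: first the topological isomorphism, and then the cohomological one, reducing the latter to the vanishing of a single $\varprojlim^1$ term.

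For the topological isomorphism I would show that the filtration $\{U_n\}_n$ and the filtration $\{\ell^k U\}_k$ from the preceding Remark (where $M \cong \varprojlim_k M/\ell^k U$) are mutually cofinal. That each $U_n$ contains some $\ell^k U$ is immediate, since the $\ell^k U$ form a neighborhood basis of $0$. For the converse, fix $k$ and note that the sets $U_m \setminus \ell^k U = U_m \cap (M \setminus \ell^k U)$ are closed subsets of the compact group $U_1$, decreasing in $m$, with empty intersection because $\bigcap_m U_m = 0 \in \ell^k U$; the finite intersection property then forces $U_m \subseteq \ell^k U$ for some $m$. Mutual cofinality identifies the two inverse systems of quotients of $M$, giving $\varprojlim_n M/U_n \cong \varprojlim_k M/\ell^k U = M$ compatibly with the canonical maps, which is the first claim.

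For the cohomological statement, the key point is that each $H^i$ is compact (as $H$ is profinite) while each $M/U_n$ is discrete, so a continuous cochain $H^i \to M = \varprojlim_n M/U_n$ is exactly a compatible system of continuous cochains to the $M/U_n$; hence $C^\bullet(H, M) = \varprojlim_n C^\bullet(H, M/U_n)$ as complexes. The transition maps $C^i(H, M/U_{n+1}) \to C^i(H, M/U_n)$ are moreover surjective: a continuous cochain to the discrete module $M/U_n$ is locally constant, factors through a finite quotient of $H^i$, and so lifts set-theoretically through the surjection $M/U_{n+1} \twoheadrightarrow M/U_n$. Degreewise surjectivity makes the tower Mittag--Leffler, so the standard $\varprojlim/\varprojlim^1$ short exact sequence for an inverse system of complexes yields, in degree $1$,
\[0 \to {\varprojlim_n}^{1} H^0(H, M/U_n) \to H^1(H, M) \to \varprojlim_n H^1(H, M/U_n) \to 0.\]
It therefore suffices to prove that $\varprojlim^1_n (M/U_n)^H = 0$.

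This last vanishing is the \emph{crux}, and I expect it to be the main obstacle, since the towers $(M/U_n)^H$ need not have surjective transition maps. After discarding finitely many terms (harmless for $\varprojlim$ and $\varprojlim^1$), the cofinality above lets me assume $U_n \subseteq U$ for all $n$, so that each $U/U_n$ is finite. Applying $(-)^H$ to $0 \to U/U_n \to M/U_n \to M/U \to 0$ exhibits $\{(M/U_n)^H\}$ as an extension of $\{\ker(\delta_n)\}$, where $\delta_n \colon (M/U)^H \to H^1(H, U/U_n)$ is the connecting map, by the tower of finite groups $\{(U/U_n)^H\}$; the latter has vanishing $\varprojlim^1$, so by the six-term $\varprojlim^1$ sequence it is enough to show $\varprojlim^1_n \ker(\delta_n) = 0$. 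The $\ker(\delta_n)$ form a descending chain of $\mathbb{Z}_\ell$-submodules of $(M/U)^H$, and since the Pontryagin dual of $M/U$ is a finitely generated $\mathbb{Z}_\ell$-module we have $M/U \cong (\mathbb{Q}_\ell/\mathbb{Z}_\ell)^r \oplus (\text{finite})$, which is an Artinian $\mathbb{Z}_\ell$-module, as is its submodule $(M/U)^H$. The descending chain therefore stabilizes, the tower is Mittag--Leffler, and its $\varprojlim^1$ vanishes, completing the argument.
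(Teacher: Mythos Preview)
Your argument is correct. Both the cofinality argument for the topological isomorphism and the $\varprojlim^{1}$ reduction for the cohomology are sound; in particular, your identification of the crux as the Artinian property of $M/U$ (forcing the descending chain $\ker(\delta_n)\subseteq (M/U)^H$ to stabilize) is exactly right.

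Your route differs from the paper's in organization rather than in essential content. The paper treats surjectivity and injectivity separately: surjectivity by a direct cocycle-lifting argument (adjusting representatives by coboundaries so a compatible family in $\varprojlim_n Z^1(H,M/U_n)$ assembles to a cocycle valued in $M$), and injectivity by first citing \cite[2.7.6]{Neuk08} for the isomorphism $H^1(H,U)\cong\varprojlim_n H^1(H,U/U_n)$ and then running a diagram chase over the rows coming from $0\to U\to M\to M/U\to 0$, where the key input is that the images of $H^0(H,M/U_n)$ in $H^0(H,M/U)$ stabilize by the same Artinian observation you use. You instead package both directions into the Milnor sequence and reduce everything to $\varprojlim^1_n (M/U_n)^H=0$, which you then deduce from the six-term sequence and the Artinian property. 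Your version is a bit more systematic and self-contained (no external citation for the case of $U$), while the paper's hands-on surjectivity argument has the minor advantage of making explicit how to produce the global cocycle. Either way, the substantive idea is the same stabilization of submodules of the cofinite-type module $(M/U)^H$.
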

\begin{proof}
That  $M \cong  \varprojlim_n M/U_n$ follows from the definition of an acceptable module.  

To show surjectivity of \eqref{eq:bye276}, take $(\phi_n)_n$ in $\varprojlim_{n} H^1(H, M/U_n)$, and represent each $\phi_n$ by a cocycle $\overline{\phi_n}$. Successively adjusting the $\overline{\phi_n}$ by   coboundaries if necessary, we may assume that the collection $(\overline{\phi_n})_n$ comprises a cocycle valued in the inverse limit  $\varprojlim_n M/U_n$. Since $M \cong  \varprojlim_n M/U_n$, the surjectivity of \eqref{eq:bye276} follows.

We now prove injectivity. Choose any compact open $G_F$-submodule $U$ of $M$ containing $U_1$. From  \cite[2.7.6]{Neuk08}, the natural projection defines an isomorphism from $H^1(H, U)$ to $\varprojlim_{n} H^1(H, U/U_n)$. 

We note that any sequence of submodules
\[M/U \supseteq A_1 \supseteq A_2 \supseteq \dots\]
must stabilize after finitely many steps, as follows from the fact that $(M/U)^{\vee}$ is a finitely generated $\Z_{\ell}$-module,  hence Noetherian. Consequently,  the map
\[H^0(H, M/U)  \to {\varprojlim}_n H^0(H, M/U) \,\big/\,\text{im}\big( H^0(H, M/U_n)\big)\]
is surjective, where the images are taken with respect to the natural projections $M/U_n \to M/U$. This, along with left-exactness of inverse limits, gives a commutative diagram
\[\begin{tikzcd}
H^0(H, M/U)    \arrow{r} \arrow[d, equals] &   H^1(H, U) \arrow{r} \arrow{d}{\sim} & H^1(H, M)\arrow{d} \\
H^0(H, M/U) \arrow{r}  &  \varprojlim_{n} H^1(H, U/U_n) \arrow{r}  & \varprojlim_{n} H^1(H, M/U_n) 
\end{tikzcd}\]
with exact rows. Taking a union over $U$ gives the result. 
\end{proof}

\begin{proof}[Proof of Proposition \ref{prop:i_T_nd}]
 Choose a nonzero element $\phi$ in $H^1(G_v, M)$. We wish to find $\psi$ in $H^1(G_v, M^{\vee})$ so the pairing between $\phi$ and $\psi$ is nonzero.

As is possible by Lemma \ref{prop:i_T_nd}, take an open compact $G_F$-submodule $V$  so that $\phi$ has nonzero image in $H^1(G_v, M/V)$. If there is $\psi_0$ in $H^1(G_v, (M/V)^{\vee})$ which has nonzero pairing with the image of $\phi$ in $H^1(G_v, M/V)$, then we may take $\psi$ equal to the image of $\psi_0$. So we may assume that $M$ is discrete.

With this assumption, choose a finite $G_F$-submodule $U$ of $M$ so $\phi$ is the image of some element $\phi_0$ in $H^1(G_v, U)$. Since $\phi$ is nonzero, $\phi_0$ is not in the image of the connecting map
\[H^0(G_v, M/U) \to H^1(G_v, U).\]
From local Tate duality, there is $\psi_0$ in $H^1(G_v, U^{\vee})$  that is orthogonal to this image but which is not orthogonal to $\phi_0$. From \cite[I.4.5]{Neuk08}, the image of $\psi_0$ under the connecting map to $H^2(G_v, (M/U)^{\vee})$ is orthogonal to $H^0(G_v, M/U)$. From local Tate duality and  \cite[2.7.6]{Neuk08}, this image must then be $0$, so $\psi_0$ may be lifted to $H^1(G_v, M^{\vee})$, giving the result.
\end{proof}

Choose an acceptable module $M$ and a compact open $G_F$-submodule $U$ of $M$. We make $H^1(G_v, M)$ into a topological group by taking the images of the $H^1(G_v, \ell^kU)$ with $k \ge 0$ to be a neighborhood basis of $0$.  The group $H^1(G_v, U)$ is a finitely generated $\Z_{\ell}$-module, so $H^1(G_v, M)$ is locally compact. We then find that the above pairing identifies $H^1(G_v, M^{\vee})$ with the Pontryagin dual to $H^1(G_v, M)$. We also see that, for any finite set of places $S$, the product $\prod_{v \in S} H^1(G_v, U)$ is a finitely generated $\Z_{\ell}$-module, so its closed subgroups are precisely those closed under multiplication by $\Z_{\ell}$.

From this and \cite[Corollary 2]{Morris77}, we find that the functor $(M, \msW) \mapsto (M^{\vee}, \msW^{\perp})$ defines a duality on the category $\SMod_{F, \ell}^{\infty}$, where the orthogonal complement is defined with respect to the sum of local pairings.
\end{defn}

As before, we find that $\SMod_{F, \ell}^{\infty}$ is quasi-abelian, and we may define strictly epic and monic morphisms and short exact sequences as before. Given $(M, \msW)$ in $\SMod_{F, \ell}^{\infty}$, we may define $\Sel( M, \msW)$ as the subgroup of $H^1(G_F, M)$ mapping into $\msW$, just as before.

\begin{defn}
\label{defn:infinite_pairing}
Given an exact sequence
\begin{equation}
\label{eq:example_exact}
E = \left[0 \to M_1 \xrightarrow{\,\,\iota\,\,} M\xrightarrow{\,\,\pi\,\,} M_2 \to 0\right]
\end{equation}
in $\SMod_{F, \ell}^{\infty}$, we define the pairing
\[\CTP_E\colon \Sel\, M_2\, \times\, \Sel\, M_1^{\vee}\to \QQ/\Z\]
using Definition \ref{defn:CTP}, where $Z^i$ and $C^j$ are reinterpreted as groups of continuous cocycles and cochains. As part of this definition, we choose a tuple
\[(\overline{\psi},\, \overline{\phi},\, f, \,(\overline{\phi}_{v, M})_v, \,\epsilon).\]
% Proofs of results below:
% Existence of section: choose free open compact in U  in M_2, take v_1, ..., v_n to be generators, choose lifts for these, extend to Zl homomorphism, choose representative from each coset M_2/U to lift and combine for section

That $f$ may be chosen to be continuous follows from the fact that $\pi$ has a continuous section and that the topology of $M_1$ is induced by the injection into $M$. The group $H^3(G_F,(F^s)^\times)$ is torsion since $(F^s)^\times$ is discrete, and the class of $\iota^{-1}(df) \cup \overline{\psi}$ has order a power of $\ell$. Thus \cite[Corollary 4.18]{Milne86} allows us to choose $\epsilon$ exactly as in Definition \ref{defn:CTP}. % lets us choose $\epsilon$. 
The rest of the definition is straightforward.
\end{defn}
It is easily checked from this definition that this pairing has the same naturality properties as the pairing for finite modules. It is also easily checked that it still obeys the duality identity.  The remaining task is to calculate the kernels of this pairing.

\begin{prop}
The left kernel of the pairing defined above is $\pi(\Sel\, M)$, and its right kernel is $\iota^{\vee}(\Sel\,M^{\vee})$.
\end{prop}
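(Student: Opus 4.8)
The plan is to mirror the finite-module arguments of Sections~\ref{ssec:left_kernel} and \ref{ssec:du_id}, replacing the Poitou--Tate inputs by their analogues for acceptable modules. Since the duality identity of Theorem~\ref{thm:dual_id} has already been checked to hold in $\SMod_{F,\ell}^{\infty}$, the right kernel of $\CTP_E$ equals the left kernel of $\CTP_{E^{\vee}}$, exactly as in the proof of Proposition~\ref{prop:right_kernel}. It therefore suffices to prove that for every exact sequence $E$ the left kernel of the pairing defined above is $\pi(\Sel\, M)$; applying this statement to $E^{\vee}$ then yields the right kernel.

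For the left kernel, the cochain manipulations in the proof of Proposition~\ref{prop:left_kernel} involve only continuous cochains and carry over verbatim. What must be re-established for an acceptable module $M_1$ are the two duality facts underlying that proof: first, a perfect pairing
\[
\Sha^2(M_1) \times \Sha^1(M_1^{\vee}) \to \QQ_{\ell}/\Z_{\ell}
\]
between a profinite group and a discrete torsion group; and second, the global orthogonality $\msW_{\text{global}}(M_1)^{\perp} = \msW_{\text{global}}(M_1^{\vee})$ under the summed local pairing. Granting these, the two subclaims in the proof of Proposition~\ref{prop:left_kernel} hold word for word, the first invoking nondegeneracy of the $\Sha$-pairing and the second the nondegeneracy of the analogue of \eqref{eq:second_step_duality}.

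To obtain the two facts, I would pass to the limit over the finite levels. Fix a compact open $G_F$-submodule $U$ of $M_1$ and set $U_n = \ell^{n}U$, so that $M_1 \cong \varprojlim_n M_1/U_n$ with each quotient $M_1/U_n$ finite and $M_1^{\vee} \cong \varinjlim_n (M_1/U_n)^{\vee}$. Since filtered colimits of discrete modules commute with cohomology and with the formation of restricted products, $\Sha^1(M_1^{\vee})$ is the direct limit $\varinjlim_n \Sha^1\big((M_1/U_n)^{\vee}\big)$. The finite-level Poitou--Tate pairings of \eqref{eq:first_step_duality} are perfect and compatible with the transition maps, so taking the colimit on the $M_1^{\vee}$ side produces a perfect pairing
\[
\varprojlim_n \Sha^2(M_1/U_n) \,\times\, \Sha^1(M_1^{\vee}) \to \QQ_{\ell}/\Z_{\ell},
\]
exhibiting $\varprojlim_n \Sha^2(M_1/U_n)$ as the Pontryagin dual of the discrete torsion group $\Sha^1(M_1^{\vee})$. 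It then remains to identify $\Sha^2(M_1)$ with this inverse limit, a degree-$2$ analogue of the isomorphism \eqref{eq:bye276}. Finally, the global orthogonality is deduced from its finite-level counterpart using the topological perfectness of the summed local pairing, together with the identification of $H^1(G_v, M_1^{\vee})$ with the Pontryagin dual of $H^1(G_v, M_1)$ recorded around Proposition~\ref{prop:i_T_nd} and in the discussion of Definition~\ref{defn:infinite_pairing}.

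The main obstacle is precisely this limiting analysis in degree $2$: one must control the $\varprojlim^{1}$ term in the comparison of $\Sha^2(M_1)$ with $\varprojlim_n \Sha^2(M_1/U_n)$, which I expect to follow from the Mittag--Leffler condition once the finite-level $\Sha^2(M_1/U_n)$ and the transition maps are seen to stabilize (using the finiteness of the finite-level $\Sha$-groups and the Noetherian arguments already exploited in the proof of \eqref{eq:bye276}), so that the limit pairing remains perfect. One must likewise verify that the orthogonal complements of the local conditions are compatible with the colimit and limit structures, so that the finite-level orthogonality globalizes. By contrast, the cochain-level input is identical to the finite case and requires no new ideas.
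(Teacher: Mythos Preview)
Your reduction of the right kernel to the left kernel via the duality identity matches the paper, so that part is fine.

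For the left kernel, however, the paper takes a genuinely different and much lighter route. Rather than attempting to rebuild the Poitou--Tate apparatus (the pairing \eqref{eq:first_step_duality} and the global orthogonality \eqref{eq:second_step_duality}) for acceptable modules, the paper uses the \emph{naturality} of the Cassels--Tate pairing itself to reduce to the finite-module case already handled by Theorem~\ref{thm:CTP_intro}. Concretely: given $\phi$ in the left kernel of $\CTP_E$, the paper first quotients $M$ by an arbitrary compact open submodule $V$, noting that naturality forces the image of $\phi$ in $\Sel\,M_2/\pi(V)$ to lie in the left kernel of the pairing for the discrete quotient sequence; by \eqref{eq:bye276} this reduces to $M$ discrete. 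Then one writes $M$ as a union of finite submodules $U_n$, and a second application of naturality (together with a finiteness argument to find a single $U_n$ that sees enough of $\Sel\,M_1^{\vee}$) reduces to a finite exact sequence, where Theorem~\ref{thm:CTP_intro} applies directly.

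Your approach, by contrast, tries to prove infinite-module analogues of \eqref{eq:first_step_duality} and \eqref{eq:second_step_duality} head-on. The obstacle you identify is real: to get $\Sha^2(M_1)\cong\varprojlim_n\Sha^2(M_1/U_n)$ you would need to control a $\varprojlim^1$ of degree-$1$ cohomology groups, and the groups $H^1(G_F,M_1/U_n)$ are typically infinite with no evident Mittag--Leffler behaviour. The Noetherian argument used to prove \eqref{eq:bye276} does not obviously adapt to degree $2$. So this is a genuine gap, not merely a technicality. The paper's naturality argument sidesteps the entire issue: it never needs Poitou--Tate for infinite modules, only the (already established) naturality of $\CTP$ and the finite-module theorem.
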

\begin{proof}
From the duality identity, the result for right kernels follows from the result for left kernels. So we focus on calculating the left kernel.  From the definition, it is clear that this kernel contains $\pi(\Sel\,M)$. \color{black}

Conversely, suppose $\phi$ is in the left kernel of this pairing. From \eqref{eq:bye276}, we see that $\phi$ lies in $\pi(\Sel\, M)$ if, for every compact open $G_F$-submodule $V$ of $M$, the image of $\phi$ in $\Sel\, M_2/\pi(V)$ lies in $\pi(\Sel\, M/V)$. From naturality, we find that the image of $\phi$ in $\Sel \,M_2/\pi(V)$ is in the left kernel of the pairing corresponding to
\[0 \to M_1/\iota^{-1}(V) \to M/V \to M_2/\pi(V) \to 0.\]
From this, we may assume that $M$ is discrete without loss of generality.

Choose finite submodules $U_1 \subseteq U_2 \subseteq \dots$ of $M$ with union $M$ so $\phi$ is in the image of some $\phi_1$ in $\Sel\, U_1$. Then $\Sel\, M_1^{\vee}$ equals $\varprojlim_n \Sel \,(\iota^{-1}(U_i))^{\vee}$ by \eqref{eq:bye276}. Since $\Sel \,(\iota^{-1}(U_1))^{\vee}$ is finite, there is some $n$ so the image of $\Sel\, M_1^{\vee}$ in this group equals the image of $\Sel\,(\iota^{-1}(U))^{\vee}$ for $U = U_n$. Applying naturality twice, we find that the image of $\phi_1$ in $\Sel\, \pi(U)$ is in the left kernel of the pairing corresponding to
\[0 \to \iota^{-1}(U) \to U \to \pi(U) \to 0.\]
Applying Theorem \ref{thm:CTP_intro} then shows that $\phi$ is in the image of $\pi(\Sel\, U)$, and we are done.
\end{proof}
 
\begin{ex}
\label{ex:Flach}
Choose a torsion-free finitely-generated $\Z_{\ell}$-module $T$ with a continuous action of $G_F$, and choose a set of local conditions $\msW$ for $T \otimes \QQ_{\ell}$ so $\msW$ is a $\QQ_{\ell}$-vector space. Then the pairing we construct corresponding to the exact sequence
\[0 \to T \to T \otimes \QQ_{\ell} \to T \otimes \QQ_{\ell}/\Z_{\ell} \to  0\]
equals the pairing considered by Flach in \cite[Theorem 1]{Flach90}.
\end{ex}

\subsection{More general local conditions} \label{sec:general_local_conds}
Another direction in which to expand the category $\SMod_F$ is to allow for more general local conditions subgroups. Ideally, we would allow arbitrary closed subgroups for this definition. Unfortunately, given a surjection $\pi\colon M \to M_2$ of $G_F$-modules and a closed subgroup $\msW$ of $\tresprod{v} H^1(G_v, M)$, it is not necessarily true that $\pi(\msW)$ is closed in $\tresprod{v} H^1(G_v, M_2)$. We give an example of this behavior now.
\begin{ex}
\label{ex:bad_local}
Take $F = \QQ$, and take $\pi$ to be the unique surjection from $\mu_4$ to $\mu_2$. At each rational prime $p\neq 2$, choose $\phi_p$ in $H^1(G_p, \mu_4)$ which is ramified, but for which $2\phi_p$ is a nonzero unramified cocycle class. More concretely, under the standard identification
\[\QQ_p^{\times}/(\QQ_p^{\times})^4 \isoarrow H^1(G_p, \mu_4) \]
  we can take $\phi_v$ to be the image of $\alpha_p p^2$, where $\alpha_p$ is any non-square unit in $\Z_p^{\times}$.

Taking $ \msW =\resprod_{p \ne 2} \langle\phi_p \rangle$
we see that $\msW$ is closed in $\tresprod{v} H^1(G_v,\mu_4)$. But since elements in this restricted product are unramified at almost all places, we see that 
\[\pi(\msW) = \bigoplus_{v \ne 2, \infty} H^1_{\text{ur}}(G_v, \mu_2),\]
whose  closure   is the far larger $\prod_{v \ne 2, \infty}  H^1_{\text{ur}}(G_v, \mu_2)$.
\end{ex}
\begin{rmk}
Similar pathological local conditions can be constructed for the infinite Galois modules considered by Flach in \cite{Flach90}. In particular, it is possible to define local conditions for $T = \widehat{\Z}(1) \oplus \widehat{\Z}$ satisfying the requirements of \cite[p.114]{Flach90} for which the resultant Cassels--Tate pairing is not well defined. This oversight in Flach's definition is avoided in his main theorem, where additional requirements on the local conditions are imposed.
\end{rmk}

To avoid this pathology, we need to impose some extra property on our subgroups of local conditions. The following definition axiomatizes some properties this condition should have.
\begin{defn}
For each finite discrete $G_F$-module $M$ of order indivisible by the characteristic of $F$,   take $\mathscr{C}_M$ to be a set of subgroups of $\tresprod{v} H^1(G_v, M)$. We say a subgroup of this restricted product \emph{obeys condition $\mathscr{C}$} for $M$ if it lies in $\mathscr{C}_M$.

We say that condition $\mathscr{C}$ is \emph{Cassels--Tate ready} if the following conditions are satisfied:
\begin{enumerate}
\item If $\msW$ obeys condition $\mathscr{C}$, it is closed.
\item If $\msW$ and $\msW'$ obey condition $\mathscr{C}$ for $M$ and $M'$ respectively, then $\msW \oplus \msW'$ obeys condition $\mathscr{C}$ for $M \oplus M'$.
\item If $\msW$ obeys condition $\mathscr{C}$ for $M$, then $\msW^{\perp}$ obeys condition $\mathscr{C}$ for $M^{\vee}$.
\item Given any exact sequence
\[0 \to M_1 \xrightarrow{\,\,\iota\,\,} M \xrightarrow{\,\,\pi\,\,}M_2 \to 0\]
of $G_F$-modules and a local conditions subgroup $\msW$ for $M$ obeying condition $\mathscr{C}$, the groups $\iota^{-1}(\msW)$ and $\pi(\msW)$ obey condition $\mathscr{C}$ for $M_1$ and $M_2$ respectively. In particular, they are closed.
\item Given any $M$, take $\msW_{\textup{global}}$ to be the image of $H^1(G_F, M)$ in $\tresprod{v} H^1(G_v, M)$. Then, if $\msW$ obeys condition $\mathscr{C}$ for $M$, the subgroup
\[\msW_{\textup{global}} + \msW \subseteq \resprod_{v} H^1(G_v, M)\]
is closed.
\end{enumerate}
\end{defn}

Given a Cassels--Tate ready condition $\mathscr{C}$, we may redefine our category $\SMod_F$ so its objects are tuples $(M, \msW)$ where, as usual, $M$ is a finite discrete $G_F$-module, and where    $\msW$ is any local conditions subgroup for $M$ that obeys condition $\mathscr{C}$. This category is still quasi-abelian, the Cassels--Tate pairing can still be defined and has the kernels given in Theorem \ref{thm:CTP_intro}, and naturality and the duality identity still hold. Indeed, the only part of our theory that fails in this context is the direct proof of Proposition \ref{prop:right_kernel}, since this relies on the finiteness of the group $\Sel\, M_2/\pi(\Sel\, M)$.

The condition of being compact and open is easily seen to be Cassels--Tate ready. A more general condition that still works is the following:
\begin{defn}
Given $M$, call a subgroup $\msW$ of $\tresprod{v} H^1(G_v, M)$ \emph{suitable} if there are subgroups $\msW_0$ and $\msW_c$ of $\tresprod{v} H^1(G_v, M)$ satisfying
\begin{equation}
\label{eq:suit_decomp}
\msW \,=\, \msW_0 + \msW_c
\end{equation}
where $\msW_c$ is compact and $\msW_0$ is the image of an open subgroup of $\tresprod{v} H^1(G_v, M_0)$ for some $G_F$-submodule $M_0$ of $M$.
\end{defn}

We omit the proof of the following proposition.
\begin{prop}
The condition of being a suitable subgroup is Cassels--Tate ready.
\end{prop}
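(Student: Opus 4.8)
The plan is to verify each of the five axioms in the definition of Cassels--Tate ready for the class of suitable subgroups. Throughout, I fix a finite discrete $G_F$-module $M$ and a suitable subgroup $\msW = \msW_0 + \msW_c$ as in \eqref{eq:suit_decomp}, where $\msW_c$ is compact and $\msW_0$ is the image of an open subgroup $U$ of $\tresprod{v} H^1(G_v, M_0)$ for some $G_F$-submodule $M_0 \subseteq M$. The key structural input I would record first is that an open subgroup $U$ of $\tresprod{v} H^1(G_v, M_0)$ contains $\prod_{v \notin S} H^1_{\text{ur}}(G_v, M_0)$ for some finite set $S$, so $\msW_0$ contains the image $\prod_{v \notin S}\iota_{0,*}\big(H^1_{\text{ur}}(G_v, M_0)\big)$; combined with the fact that $M_0 \hookrightarrow M$ sends unramified classes to unramified classes, this shows $\msW_0$ is itself open in the subgroup of $\tresprod{v} H^1(G_v, M)$ obtained by pushing forward, and in particular contains an open subgroup of the full restricted product intersected with a closed summand. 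This openness-plus-compactness decomposition is what makes closedness statements tractable.

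For axiom (1), closedness of $\msW$ follows because $\msW_0$ is open hence closed, $\msW_c$ is compact hence closed, and the sum of a closed subgroup and a compact subgroup in a topological group is closed (the image of the closed set $\msW_0 \times \msW_c$ under the proper-on-the-compact-factor addition map is closed). Axiom (2) is essentially formal: if $\msW = \msW_0 + \msW_c$ for $M$ and $\msW' = \msW_0' + \msW_c'$ for $M'$, then $\msW \oplus \msW' = (\msW_0 \oplus \msW_0') + (\msW_c \oplus \msW_c')$, the first summand is the image of the open set $U \oplus U'$ coming from the submodule $M_0 \oplus M_0'$, and the second is a product of compacts hence compact. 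For axiom (4), I would treat $\iota^{-1}$ and $\pi$ separately. The preimage $\iota^{-1}(\msW)$: since $\iota_*$ is continuous, I would show $\iota^{-1}(\msW) = \iota^{-1}(\msW_0) + \iota^{-1}(\msW_c)$ up to a suitability-preserving adjustment, using that $\iota^{-1}(\msW_c)$ is closed (preimage of compact under continuous map need not be compact, so here I instead pull back the \emph{open} part cleanly and handle the compact part by noting $\iota^{-1}(\msW_c)$ is closed and that one can re-split). The pushforward $\pi(\msW) = \pi(\msW_0) + \pi(\msW_c)$: here $\pi(\msW_c)$ is compact since $\pi_*$ is continuous, and $\pi(\msW_0)$ is the image of the open set $U$ under the composite $M_0 \to M \to M_2$, i.e. the image of an open subgroup for the $G_F$-submodule $\pi(M_0)$ of $M_2$, so $\pi(\msW)$ is again suitable.

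The genuinely delicate axioms are (3) and (5), and I expect axiom (3) --- stability of suitability under orthogonal complement --- to be the main obstacle. The issue is that $\perp$ interchanges open and compact: the annihilator of an open subgroup under a topologically perfect pairing is compact, and the annihilator of a compact subgroup is open. Concretely, by the topological perfectness of \eqref{eq:loc_pairing_2}, I would argue that $\msW^\perp = \msW_0^\perp \cap \msW_c^\perp$, that $\msW_0^\perp$ is compact (being the annihilator of a subgroup containing an open subgroup), and that $\msW_c^\perp$ is open (being the annihilator of a compact subgroup). The desired decomposition $\msW^\perp = (\text{open}) + (\text{compact})$ is then NOT immediately the intersection, so the real work is to produce an additive splitting of $\msW_0^\perp \cap \msW_c^\perp$ into an open piece coming from a submodule of $M^\vee$ and a compact piece; this I would extract by choosing a finite set of places $S$ controlling all the openness/compactness data simultaneously, splitting the restricted product as $\prod_{v \in S} \times \resprod_{v \notin S}$, noting that off $S$ everything is the unramified subgroup and its self-dual orthogonal, and doing the finite-dimensional linear algebra over the $v \in S$ factor where openness and compactness are both just finite-index/finite conditions. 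Once (3) is in hand, axiom (5) --- closedness of $\msW_{\text{global}} + \msW$ --- follows by the same closed-plus-compact principle after I show $\msW_{\text{global}}$ is closed (which is standard, being the image of the global cohomology) and absorb the open part $\msW_0$ into an open, hence closed, subgroup; the sum of the closed group $\msW_{\text{global}} + \msW_0$ with the compact group $\msW_c$ is then closed. The crux throughout is keeping careful track of which pieces are open and which compact as one applies $\perp$, which is exactly why (3) carries the weight of the proof.
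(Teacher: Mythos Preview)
The paper omits the proof of this proposition entirely, so there is no argument to compare against; I can only evaluate your plan on its own merits.

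Your overall architecture is sound, and you correctly identify axiom (3) as the crux. However, there is a recurring conflation that breaks your arguments for (1), (3) and (5): you slide between ``$\msW_0$ is the image of an open subgroup of $\tresprod{v} H^1(G_v, M_0)$'' and ``$\msW_0$ is an open subgroup of $\tresprod{v} H^1(G_v, M)$''. These are not the same. Take $M_0 = 0$: then $\msW_0 = 0$, which is certainly not open in the ambient restricted product, and $\msW_0^\perp$ is the whole group, which is not compact. So your claim in (3) that $\msW_0^\perp$ is compact ``being the annihilator of a subgroup containing an open subgroup'' fails, and likewise your axiom (5) step ``absorb the open part $\msW_0$ into an open, hence closed, subgroup'' is unjustified. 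Even for (1) you need the extra observation that the image $\iota_{0,*}\big(\tresprod{v} H^1(G_v, M_0)\big)$ is closed in $\tresprod{v} H^1(G_v, M)$ (it is the kernel of the map to $\tresprod{v} H^1(G_v, M/M_0)$), together with an open mapping theorem to see $\msW_0$ is open in that closed image.

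The missing idea for (3) is to dualize the inclusion rather than the subgroup. Writing $\iota_0\colon M_0 \hookrightarrow M$, the adjoint $\iota_0^\vee \colon M^\vee \twoheadrightarrow M_0^\vee$ induces $(\iota_0^\vee)_* \colon \tresprod{v} H^1(G_v, M^\vee) \to \tresprod{v} H^1(G_v, M_0^\vee)$, and one checks $\msW_0^\perp = (\iota_0^\vee)_*^{-1}(U^\perp)$ with $U^\perp$ compact. The kernel of $(\iota_0^\vee)_*$ is exactly the image of $\tresprod{v} H^1(G_v, (M/M_0)^\vee)$, which is of the required ``image of an open subgroup from a submodule'' form with submodule $(M/M_0)^\vee \subseteq M^\vee$. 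One then lifts the compact $U^\perp$ to a compact subgroup upstairs to produce the decomposition $\msW_0^\perp = (\text{image-of-open from } (M/M_0)^\vee) + (\text{compact})$; combining with the open $\msW_c^\perp$ and re-splitting finishes (3). Your finite-set-of-places reduction is a reasonable way to execute this last lifting step, but it should be aimed at this decomposition rather than at the incorrect ``$\msW_0^\perp$ compact, $\msW_c^\perp$ open'' starting point. Your treatment of the pullback $\iota^{-1}(\msW)$ in axiom (4) has a similar issue: $\iota^{-1}(\msW_0 + \msW_c)$ is generally strictly larger than $\iota^{-1}(\msW_0) + \iota^{-1}(\msW_c)$, so ``re-split'' needs a concrete mechanism, again most cleanly obtained by passing to the submodule $\iota^{-1}(M_0) \subseteq M_1$.
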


Note that the condition of being suitable is the strongest condition that is simultaneously Cassels--Tate ready and satisfied by all compact local conditions subgroups (equivalently by all open local conditions subgroups).

\section{Theta groups and the Poonen--Stoll class}
\label{sec:theta}
 
 In this section we turn to proving Theorem \ref{thm:poonen-stoll_into}, characterising the failure of Flach's Cassels--Tate pairing to be alternating for modules with alternating structure. As mentioned in the introduction, we will deduce this from a more general version, presented in Theorem \ref{thm:theta_main} below.  That result rests on the study of certain central extensions which we refer to as theta groups. 
 
\begin{defn}
\label{defn:theta}
Given $(M, \msW)$ in $\SMod_F$  or $\SMod_{F,\ell}^{\infty}$, a \emph{theta group} for $M$ will be a (typically non-abelian) topological group $\mcH$ with a continuous $G_F$-action fitting in a $G_F$-equivariant exact sequence
\begin{equation}
\label{eq:theta_seq2}
0 \to (F^s)^{\times} \xrightarrow{\,\,\,\,\,} \mcH \xrightarrow{\,\,\pi_{\mcH}\,\,} M \to 0,
\end{equation}
with $(F^s)^{\times}$ central in $\mcH$. We will assume that $\pi_{\mcH}$ is a continuous open map and that some open subgroup of $\mcH$ has preimage $\{1\}$ in $(F^s)^{\times}$. This assumption ensures in particular that $\pi_\mcH$ admits a continuous set-section, and that the discrete topology on $(F^s)^\times$ is induced by its inclusion in $\mcH$. In the case that $M$ is in $\SMod_F$, the assumption is equivalent to $\mcH$ being discrete.

We define the \emph{associated map}
\begin{equation} \label{eq:associated_theta_map}
f_{\mcH}: M \to M^{\vee}
\end{equation}
to this theta group to be the unique homomorphism satisfying
\[[g, h] \,=\, ghg^{-1}h^{-1} \,=\, \big\langle \pi_{\mcH}(g), \, f_{\mcH}(\pi_{\mcH}(h))\big\rangle\]
for all $g, h$ in $\mcH$, where $\left \langle~,~\right \rangle$ is the evaluation pairing on $M\times M^\vee$. The map $f_{\mcH}$ is well defined because \eqref{eq:theta_seq2} is a central extension, satisfies
$f_{\mcH}^{\vee} = -f_{\mcH}$,  and is continuous.
\end{defn}

We want to consider maps on cohomology  associated to the sequence \eqref{eq:associated_theta_map}. Since $\mcH$ is not assumed to be abelian, we take the time to set out the relevant notation and conventions. 

\begin{notat} \label{notat:theta_connecting}
Given a closed subgroup $H$ of $G_F$ and a non-negative integer $k$, we take $C^k(H, \mcH)$ to be the set of continuous functions from $H^k$ to $\mcH$. Following the convention of \cite[I, \S5.7]{MR1466966}, we define a coboundary operation
\[d \colon C^1(H, \mcH) \to C^2(H, \mcH)\]
by
\[db(\sigma, \tau) = b(\sigma) \cdot \sigma b(\tau) \cdot b(\sigma\tau)^{-1}\quad\text{for }\sigma,\tau \in H.\]
We then have a connecting map
\[q_{\mcH, H}\colon H^1(H, M) \to H^2(H, (F^s)^{\times})\]
associated to the sequence \eqref{eq:theta_seq2}. This is defined as follows: given $a \in Z^1(H, M)$ representing the class $\phi \in H^1(H, M)$, and given $b \in C^1(H, \mcH)$ satisfying
$\pi_{\mcH} \circ b\, =\, a,$
we see that $db$ is a cocycle valued in $(F^s)^{\times}$. Here $b$ can be chosen to be continuous since $\pi_\mcH$ admits a continuous section. We take $q_{\mcH, H}(\phi)$ to be the class of $db$ in $H^2(H, (F^s)^{\times})$.  The existence of this map depends on the fact that \eqref{eq:theta_seq2} is a central extension; see \cite{MR1466966} for more details.   

In contrast to the abelian case,   $q_{\mcH, H}$ is not typically a homomorphism. Instead, one has the identity  
\begin{equation}
\label{eq:Zark}
q_{\mcH, H}(\phi + \psi) - q_{\mcH, H}(\phi) - q_{\mcH, H}(\psi) = \phi \cup_M f_{\mcH}(\psi)\quad\text{for all } \phi, \psi \in H^1(H, M),
\end{equation}
first proved by Zarhin \cite{Zark74}. In the terminology of \cite[\S2.1]{PR12}, $q_{\mcH, H}$ is a quadratic map. 
\end{notat}

\begin{defn}
With the above setup in hand, we define the map
\[q_{ \text{loc-sum}}\colon \resprod_v H^1(G_v, M) \to \QQ/\Z\]
by setting,  for $(\phi_v)_v$ in $\tresprod{v} H^1(G_v, M)$,
\begin{equation} \label{eq:loc_sum_q_map}
q_{ \text{loc-sum}}((\phi_v)_v) = \sum_v \inv_v(q_{\mcH,G_v}(\phi_v)).
\end{equation}
We  say that the local conditions group $\msW$ for $M$ is \emph{isotropic with respect to $\mcH$} if  
\begin{equation} \label{eq: isotropic_quad_form_conds}
 q_{\text{loc-sum}}( \msW)=0.
 \end{equation}
\end{defn}

\begin{rmk}  
To see that the sum \eqref{eq:loc_sum_q_map} is well defined, take $S$ to be a finite set of places $v$ containing the archimedean places, the places dividing the order of $M$, the places where $M \ne M^{I_v}$, and the places where $\phi_v$ is ramified. For $v$ outside $S$, we see that $q_{\mcH, G_v}(\phi_v)$ is in the image of
$H^2(G_v/I_v,\mathcal{O}_{F_v^{\textup{ur}}}^\times)$
in $H^2(G_v, (F_v^s)^{\times})$, where $\mathcal{O}_{F_v^{\textup{ur}}}$ is the ring of integers of the maximal unramified extension of $F_v$. But this last group is trivial by \cite[Proposition 7.1.2]{Neuk08}, so
\[\inv_v(q_{\mcH,G_v}(\phi_v)) = 0\]
for $v$ outside $S$.
\end{rmk}

Take $\mcH$ to be a theta group for $M$.
Suppose we also have an exact sequence
\[E \,=\, \left[0 \to (M_1, \msW_1) \xrightarrow{\,\,\iota\,\,}(M, \msW) \xrightarrow{\,\,\pi\,\,} (M_2, \msW_2) \to 0\right]\]
in $\SMod_F$ or $\SMod_{F,\ell}^{\infty}$. Take $\mcH_1$ to be the preimage of $\iota(M_1)$ in $\mcH$ endowed with the subspace topology. We then have a commutative diagram with exact rows
\begin{equation}
\label{eq:PS_pullback}
\begin{tikzcd}
0 \ar{r} & (F^s)^{\times} \ar{r}\ar[d, equals] & \mcH_1 \ar{r}{\pi_{\mcH_1}}\ar{d} &M_1 \ar{r} \ar{d}{\iota}  & 0\\
0 \ar{r} & (F^s)^{\times} \ar{r} & \mcH \ar{r}{\pi_{\mcH}} & M \ar{r} & 0
\end{tikzcd}
\end{equation}
whose top row is the pullback of the bottom row along $\iota$. Since the topology on $M_1$ is induced from $M$, our assumptions on $\mcH$ ensure that $\pi_{\mcH_1}$ is open and continuous, and that some open subgroup of $\mcH_1$ has preimage $\{1\}$ in $(F^s)^\times$.

\begin{ass}
\label{ass:theta}$\,$ We assume that:
\begin{enumerate}
\item The group $\mcH_1$ is commutative. Equivalently,   $\iota(M_1)$ is isotropic with respect to the commutator pairing for $\mcH$.
\item We have
\[q_{\text{loc-sum}}(\iota(\msW_1)) = 0.\]
\item We have 
$\iota(\msW_1) \subseteq f_{\mcH}(\msW)^{\perp}.$
\end{enumerate}
\end{ass}

\begin{rmk} \label{rmk:theta_iso_equiv} 
The first assumption implies that there are unique $G_F$-equivariant homomorphisms $f_1$ and $f_2$ fitting in the commutative diagram
\begin{equation}
\label{eq:theta_morph}
\begin{tikzcd}
0 \ar{r} & M_1\arrow{d}{f_1} \arrow{r}{\iota}& M \arrow{r}{\pi} \arrow{d}{f_{\mcH}} & M_2 \arrow{d}{f_2} \ar{r} & 0\\
0 \ar{r} & M_2^{\vee} \arrow{r}{\pi^{\vee}} &M^{\vee} \arrow{r}{\iota^{\vee}} & M_1^{\vee}\ar{r} & 0.
\end{tikzcd}
\end{equation}
The third assumption is equivalent to the assumption that $f_1$ and $f_2$ define morphisms in $\SMod_F$ (or $\SMod_{F,\ell}^{\infty}$ as appropriate), and is weaker than the assumption that $f_{\mcH}$ defines a morphism in $\SMod_F$.

If the local conditions $\msW$ are isotropic for $\mcH$, then the second and third assumptions are automatically satisfied.
\end{rmk}

\begin{defn}
\label{defn:PS_class}
The abelian group $(F^s)^{\times}$ is divisible by all primes not equal to the characteristic of $F$, so we may choose a homomorphism $s \colon M_1 \to \mcH_1$ giving a continous section to   $\pi_{\mcH_1}$.   Indeed, if $M_1$ is finite this is clear. If $M_1$ is infinite, our assumptions ensure that the restriction of $\pi_{\mcH_1}$ to some open subgroup of $\mcH_1$ admits such a section, and we can then extend this to all of $M_1$ using  Zorn's lemma.
We represent this with the diagram
\begin{equation}
\label{eq:H1_section}
\begin{tikzcd}
0 \ar{r} &  (F^s)^{\times} \ar{r} &  \mcH_1 \ar[r, "\pi_{\mcH_1}" ']& M_1 \ar{r} \ar[l, bend right, "s" '] & 0.
\end{tikzcd}
\end{equation}
We can then define a cocycle
\begin{equation}
\label{eq:PS_rep}
\overline{\psi}_{\textup{PS}} \in Z^1(G_F, M_1^\vee) 
\end{equation}
 by the formula
\[\overline{\psi}_{\textup{PS}}(\sigma)(m) = \sigma s (\sigma^{-1} m) - s(m) \quad\text{for all } \sigma \in G_F, \, m \in M_1.\]
We call the class of $\overline{\psi}_{\textup{PS}}$ in $H^1(G_F, M_1^{\vee})$ the \emph{Poonen--Stoll class}, and we represent it with the notation  $\psi_{\textup{PS}}$.
\end{defn}

\begin{rmk}
If $M_1$ is a finite module,  there is a a canonical isomorphism
\begin{equation}
\text{Ext}^1_{G_F}(M_1, (F^s)^{\times}) \cong H^1(G_F, M_1^{\vee})
\end{equation}
arising from the local-global Ext spectral sequence \cite[Lemma 4.12]{Milne86}. A spectral sequence argument shows that, under this isomorphism, the class of the top row of \eqref{eq:PS_pullback} maps to $\psi_{\text{PS}}$.
\end{rmk}

\begin{prop}
\label{prop:PS_Sel}
Given $E$ and $\mcH$ as in Definition \ref{defn:PS_class} that satisfy Assumption \ref{ass:theta}, the associated Poonen--Stoll class $\psi_{\textup{PS}}$ lies in the Selmer group $\Sel\, M_1^{\vee}$.
\end{prop}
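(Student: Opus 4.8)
The goal is to show that the localizations of $\psi_{\textup{PS}}$ lie in $\msW_1^{\perp}$, since by definition $\Sel\, M_1^{\vee} = \Sel(M_1^{\vee}, \msW_1^{\perp})$ consists of exactly those global classes whose image in $\resprod_v H^1(G_v, M_1^{\vee})$ lands in $\msW_1^{\perp}$. First I would note that $\psi_{\textup{PS}}$ is a single global class valued in a module that is unramified outside a finite set of places, so its localizations are unramified almost everywhere and genuinely assemble into an element of the restricted product; in the $\SMod_{F,\ell}^{\infty}$ setting this uses the third defining property of an acceptable module. The content of the proposition is then the reciprocity-type vanishing
\[\sum_v \big\langle \phi_v,\, (\psi_{\textup{PS}})_v\big\rangle \,=\, 0 \quad\text{for all } (\phi_v)_v \in \msW_1,\]
where $\langle\,,\,\rangle$ is the sum of local Tate pairings of Definition \ref{defn:loc_pair} and $(\psi_{\textup{PS}})_v$ denotes the restriction of $\psi_{\textup{PS}}$ to $G_v$.

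The crux is to recognize each local pairing as a value of the connecting map attached to the theta group $\mcH_1$. By Assumption \ref{ass:theta}(1) the group $\mcH_1$ is abelian, so the map $q_{\mcH_1, G_v}$ of Notation \ref{notat:theta_connecting} is an honest homomorphism; this is consistent with Zarhin's identity \eqref{eq:Zark}, whose correction term $\phi \cup_{M_1} f_{\mcH_1}(\psi)$ vanishes because $f_{\mcH_1} = 0$ for the abelian extension $\mcH_1$. I would then identify $q_{\mcH_1, G_v}$ with cup product against $\psi_{\textup{PS}}$. This can be seen either through the isomorphism $\Ext^1_{G_F}(M_1, (F^s)^{\times}) \cong H^1(G_F, M_1^{\vee})$, under which the extension class of $\mcH_1$ is $\psi_{\textup{PS}}$ (the Remark following Definition \ref{defn:PS_class}), or more robustly by a direct cochain computation: taking $b = s \circ a$ as a lift of a cocycle $a$ representing $\phi_v$, the splitting homomorphism $s$ of \eqref{eq:H1_section} gives $db(\sigma, \tau) = \langle \sigma a(\tau),\, (\psi_{\textup{PS}})_v(\sigma)\rangle$, which is a representative of the cup product of $(\psi_{\textup{PS}})_v$ and $\phi_v$. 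The latter description has the advantage of applying verbatim in the infinite setting. Either way,
\[\inv_v\big(q_{\mcH_1, G_v}(\phi_v)\big) \,=\, \pm\,\big\langle \phi_v,\, (\psi_{\textup{PS}})_v\big\rangle,\]
with a sign that is independent of $v$ and hence irrelevant to the vanishing statement.

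To finish, I would exploit that the top row of \eqref{eq:PS_pullback} is the pullback of the bottom row along $\iota$. Functoriality of the connecting map for pullback central extensions gives $q_{\mcH_1, G_v} = q_{\mcH, G_v} \circ \iota_*$, where $\iota_* \colon H^1(G_v, M_1) \to H^1(G_v, M)$ is induced by $\iota$; concretely, a lift of $\phi_v$ into $\mcH_1$ maps under $\mcH_1 \hookrightarrow \mcH$ to a lift of $\iota_*\phi_v$ with literally the same coboundary. Summing the previous display over $v$ therefore yields, for $(\phi_v)_v \in \msW_1$,
\[\pm\sum_v \big\langle \phi_v,\, (\psi_{\textup{PS}})_v\big\rangle \,=\, \sum_v \inv_v\big(q_{\mcH, G_v}(\iota_* \phi_v)\big) \,=\, q_{\text{loc-sum}}\big(\iota(\msW_1)\big) \,=\, 0,\]
where the middle equality is the definition \eqref{eq:loc_sum_q_map} of $q_{\text{loc-sum}}$ and the last equality is exactly Assumption \ref{ass:theta}(2). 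This places the localizations of $\psi_{\textup{PS}}$ in $\msW_1^{\perp}$, which is what we wanted.

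The step I expect to require the most care is the identification of $q_{\mcH_1, G_v}$ with the cup-product connecting homomorphism and its compatibility $q_{\mcH_1, G_v} = q_{\mcH, G_v}\circ\iota_*$: these must be checked at the level of cochains, with attention to the conventions for the nonabelian coboundary $d$ and the cup product of Notation \ref{notat:cup}, since the usual long-exact-sequence formalism is unavailable for the nonabelian $\mcH$. By contrast, verifying that the localizations lie in the restricted product is routine but worth spelling out, particularly in the $\SMod_{F,\ell}^{\infty}$ case.
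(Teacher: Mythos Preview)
Your proposal is correct and follows essentially the same route as the paper: identify the connecting map $\delta_v$ for the abelian extension $\mcH_1$ with cup product against $\psi_{\textup{PS}}$ via a cochain calculation, use the pullback diagram \eqref{eq:PS_pullback} to rewrite $\delta_v$ as $q_{\mcH, G_v}\circ\iota$, and invoke Assumption \ref{ass:theta}(2) to conclude. Your discussion is somewhat more explicit about the role of Assumption \ref{ass:theta}(1) and about the cochain conventions, and the sign ambiguity you flag is in fact resolved by the paper's conventions (the identity is $\delta_v(\phi_{1v}) = \psi_{\textup{PS},v}\cup_{M_1}\phi_{1v}$ on the nose), but none of this constitutes a genuine difference in strategy.
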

\begin{proof}
A cocycle calculation gives that the connecting map
\[\delta_v\colon H^1(G_v, M_1) \to H^2(G_v, (F^s)^{\times})\]
associated to the exact sequence \eqref{eq:H1_section} satisfies
\[\delta_v(\phi_{1v}) \,= \,\psi_{\text{PS},\, v} \cup_{M_1}  \phi_{1v}\quad\text{for all}\quad \phi_{1v} \in H^1(G_v, M_1). \]
Functoriality applied to  \eqref{eq:PS_pullback}  gives $\delta_v \,=\, q_{\mcH, G_v} \circ \iota,$
so   Assumption \ref{ass:theta} (2) gives
\[\sum_v \inv_v\left(\psi_{\text{PS},\,v} \cup_{M_1}  \phi_{1v}\right) = q_{\text{loc-sum}}((\iota(\phi_{1v}))_v) = 0\quad\text{for all}\quad (\phi_{1v})_v \in \msW_1.\]
It follows that $(\psi_{\text{PS}, \,v})_v$ lies in $\msW_1^{\perp}$ and thus that $\psi_{\text{PS}}$ lies in $\Sel\, M_1^{\vee}$.
\end{proof}

Having defined the Poonen--Stoll class we can state the main result of this section.

\begin{thm}
\label{thm:theta_main}
Suppose we are in the situation of Assumption \ref{ass:theta}. Let $\phi$ be any element of $\Sel\, M_2$, and choose $(\phi_{v, M})_v \in \msW$ projecting to the image of $\phi$ in $\msW_2$. We then have
\begin{equation}
\label{eq:theta_main}
\CTP_E\big(\phi,\,\,\psi_{\textup{PS}} - f_2(\phi)\big)\, =\, q_{\textup{loc-sum}}((\phi_{v, M})_v).
\end{equation}
\end{thm}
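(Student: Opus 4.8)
The plan is to prove \eqref{eq:theta_main} at the level of cochains by feeding the data of the theta group $\mcH$ directly into Definition \ref{defn:CTP}. First I would fix a homomorphism section $s\colon M_1\to\mcH_1$ as in Definition \ref{defn:PS_class} and extend it to a continuous set-section $t\colon M\to\mcH$ of $\pi_{\mcH}$ with $t|_{\iota(M_1)}=s$. Choosing a cocycle $\overline{\phi}\in Z^1(G_F,M_2)$ for $\phi$ and a lift $f\in C^1(G_F,M)$ with $\pi\circ f=\overline{\phi}$, I set $b=t\circ f\in C^1(G_F,\mcH)$ and $w=\iota^{-1}(df)\in Z^2(G_F,M_1)$, the cocycle representing $\delta(\phi)$. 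Since $f_2$ is a morphism in $\SMod_F$ (Remark \ref{rmk:theta_iso_equiv}) and $\psi_{\textup{PS}}\in\Sel\,M_1^{\vee}$ (Proposition \ref{prop:PS_Sel}), the class $\psi_{\textup{PS}}-f_2(\phi)$ is represented by the cocycle $\overline{\psi}=\overline{\psi}_{\textup{PS}}-f_2\circ\overline{\phi}$.

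The first key point is that $\mcH$ supplies an explicit $\epsilon$. Because $\pi_{\mcH}(db)=df=\iota(w)$ and $\mcH_1$ is commutative (Assumption \ref{ass:theta}(1)), the cochain $\epsilon:=s\circ w-db$ takes values in $(F^s)^{\times}$. I would verify that $d\epsilon=w\cup_{M_1}\overline{\psi}$, so that $\epsilon$ is a legitimate choice in Definition \ref{defn:CTP}. This rests on three cochain identities: the formula $d(s\circ w)=\overline{\psi}_{\textup{PS}}\cup w$, which follows from the defining formula for $\overline{\psi}_{\textup{PS}}$ together with $s$ being a homomorphism; the commutator identity $[g,h]=\langle\pi_{\mcH}(g),f_{\mcH}(\pi_{\mcH}(h))\rangle$; and the resulting description $f_2(\pi_{\mcH}(g))(m_1)=[s(m_1),g]$ of $f_2$, which converts the nonabelian coboundary $d(db)$ into cup products with $f_{\mcH}$, hence with $f_2$.

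Next, for each place $v$ I would use the local lift $\overline{\phi}_{v,M}$ coming from the chosen tuple $(\phi_{v,M})_v\in\msW$, and set $r_v=\iota^{-1}(f_v-\overline{\phi}_{v,M})\in C^1(G_v,M_1)$, so that $w_v=dr_v$. The cochain $c'_v=b_v\cdot(s\circ r_v)^{-1}\in C^1(G_v,\mcH)$ is a second lift of $\overline{\phi}_{v,M}$, whence $[dc'_v]=q_{\mcH,G_v}(\phi_{v,M})$ and $\sum_v\inv_v(dc'_v)=q_{\textup{loc-sum}}((\phi_{v,M})_v)$ is exactly the right-hand side of \eqref{eq:theta_main}. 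Expanding the nonabelian coboundary $dc'_v$ by commuting the $\mcH_1$-factors $s\circ r_v$ past $b_v$ produces central correction terms governed by $f_{\mcH}$; combined with the identity $d(s\circ r_v)=s(w_v)+\overline{\psi}_{\textup{PS}}\cup r_v$, this gives
\[
dc'_v \,=\, db_v-s(w_v)-\overline{\psi}_{\textup{PS}}\cup r_v+C_v,
\]
where $C_v$ collects the central commutators, among them the term $-\,r_v\cup_{M_1}(f_2\circ\overline{\phi}_v)$. Subtracting this from $\gamma_v=r_v\cup_{M_1}\overline{\psi}_v-\epsilon_v$, the $db_v$ and $s(w_v)$ contributions cancel, the $f_2\circ\overline{\phi}$ commutator term cancels against the matching piece of $r_v\cup_{M_1}\overline{\psi}_v$, and what remains --- a sum of the opposite-order cup products $r_v\cup_{M_1}\overline{\psi}_{\textup{PS}}+\overline{\psi}_{\textup{PS}}\cup r_v$ together with one residual commutator --- is the coboundary $da_v$ of an explicit local $1$-cochain $a_v$, in the same spirit as the cochain $a_v$ appearing in the proof of Theorem \ref{thm:dual_id}. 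Since $\inv_v$ annihilates coboundaries, $\inv_v(\gamma_v)=\inv_v(dc'_v)$ for every $v$, and summing over $v$ yields \eqref{eq:theta_main}.

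The main obstacle is the nonabelian cochain bookkeeping in the two coboundary computations: tracking the central commutators produced when passing $s\circ r_v$ (resp. $s\circ w$) through $\mcH$-valued factors, and fixing all signs and cup-product orderings so that the leftover is exactly a local coboundary. In particular the sign of $\epsilon$ and the graded-commutativity correction relating $r_v\cup_{M_1}\overline{\psi}_{\textup{PS}}$ and $\overline{\psi}_{\textup{PS}}\cup r_v$ must be pinned down precisely; once these are handled, the cancellation of the $f_2\circ\overline{\phi}$ terms --- which is forced by the relation between $f_{\mcH}$ and $f_2$ recorded in diagram \eqref{eq:theta_morph} --- is what makes the two sides agree.
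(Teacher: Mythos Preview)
Your plan is essentially the paper's, resting on the same cochain identities: your computation of $d\epsilon$ is the content of Proposition \ref{prop:theta_compound}(1), and your local comparison of $dc'_v$ with $\gamma_v$ is Proposition \ref{prop:theta_compound}(2). The underlying lemmas you invoke (the formula for $d(s\circ w)$, the nonabelian $d(db)$ identity, and the Zarhin-type correction) are exactly Lemmas \ref{lem:dsf}, \ref{lem:ddb}, and \ref{lem:Zark_2}.

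The one organizational difference is worth noting, because it is precisely where your stated obstacle lives. The paper does not compute $\CTP_E(\phi,\psi_{\textup{PS}}-f_2(\phi))$ directly; it instead invokes the duality identity and Lemma \ref{lem:weak_symmetry} to compute $\CTPb{E^{\vee}}(\psi_{\textup{PS}}-f_2(\phi),\beta(\phi))$. The reason is the cup-product ordering: Proposition \ref{prop:theta_compound}(1) actually yields $d(s\circ w-db)=\overline{\psi}\cup w$, not $w\cup_{M_1}\overline{\psi}$ as you claim. In $\CTPb{E^{\vee}}$ the required relation for $\eta$ is exactly $d\eta=\overline{\psi}\cup w$, so $\eta=s\circ w-db$ slots in with no correction, and Proposition \ref{prop:theta_compound}(2) then matches $\gamma^{\text{bis}}_v$ with $q_{\mcH,G_v}(\phi_{v,M})$ on the nose. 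Working in $\CTP_E$ instead, your $\epsilon$ fails the condition of Definition \ref{defn:CTP} by the explicit $2$-cochain $h$ implementing graded commutativity (the same $h$ appearing in the proof of Theorem \ref{thm:dual_id}); adding $h$ to $\epsilon$ fixes this and then propagates into the local comparison as the extra coboundary $da_v$ you anticipate. So your route works, but the graded-commutativity bookkeeping you flag as the main obstacle is exactly what the paper sidesteps by routing through $\CTPb{E^{\vee}}$, having already packaged that computation once in Lemma \ref{lem:weak_symmetry} and Theorem \ref{thm:dual_id}.
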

In particular, if the local conditions $\msW$ are isotropic with respect to $\mcH$, we have
\begin{equation}
\label{eq:theta_main_isot}
\CTP_E\big(\phi,\,\,  f_2(\phi)\big) \,=\, \CTP_E\big(\phi,\,\,  \psi_{\textup{PS}}\big)  \quad\text{for all }\,\,\phi \in \Sel\, M_2,
\end{equation}
and the pairing $\CTP_E\big(\text{--},\,\, f_2(\text{--})\big)$ is alternating if and only if $\psi_{\textup{PS}}$ lifts to $\Sel\, M^{\vee}$.

\begin{rmk}
The second part of Assumption \ref{ass:theta} guarantees that $\psi_{\text{PS}}$ lies in $\Sel\, M_1^{\vee}$, and the third part of this assumption guarantees that $f_2(\phi)$ lies in $\Sel\, M_1^{\vee}$. These assumptions together ensure that the left hand side of \eqref{eq:theta_main} is well-defined.

Moreover, as can be verified from \eqref{eq:Zark}, the second and third parts of Assumption \ref{ass:theta} are together equivalent to the single condition
\begin{equation}
\label{eq:theta_sole_ass}
q_{ \text{loc-sum}}((\phi_{1v} + \phi_v)_v) \,=\, q_{ \text{loc-sum}}((\phi_v)_v)\quad\text{for all}\quad (\phi_{1v})_v \in \iota(\msW_1),\,\, (\phi_{v})_v\in \msW.
\end{equation}
This alternative form of these assumptions suffices to show that the right hand side of \eqref{eq:theta_main} does not depend on the choice of $(\phi_{v, M})_v$.
\end{rmk}

\subsection{Proving Theorem \ref{thm:theta_main}}
\label{ssec:theta_proof}
We may organize the groups relevant to the proof of Theorem \ref{thm:theta_main} in a diagram
\begin{equation}
\label{eq:sd_theta}
\begin{tikzcd}[column sep = small]
  &(F^s)^{\times} \arrow[r,equal]\arrow[d, hook] &(F^s)^{\times}\arrow[d, hook] & \\
& \mathcal{H}_1 \arrow[d, left, "\pi_{\mcH_1}"] \arrow[r, hook] &  \mathcal{H} \arrow[d,"\pi_{\mcH}"] \arrow[r] & M_2\arrow[d, equal]\\
& M_1 \arrow{r}{\iota} \arrow[u, bend left, "s"] & M   \arrow{r}{\pi}& M_2  
\end{tikzcd}
\end{equation}
with exact rows and columns. We view $\mcH_1$ as a subgroup of $\mcH$ and $(F^s)^{\times}$ as a subgroup of $\mcH_1$, and we define $\overline{\psi}_{\text{PS}}$ from $s$ as in \eqref{eq:PS_rep}.

Throughout this section, $H$ will denote a closed subgroup of $G_F$, and all cup products are defined as in Notation \ref{notat:cup} with respect to the evaluation pairing on $M$, $M_1$, or $M_2$. We denote this pairing as $\left \langle~,~\right \rangle$.  We will use $\cdot$ to refer to the operation for the group $\mcH$ and will otherwise primarily use additive notation.

Theorem \ref{thm:theta_main} is built out of three basic cochain calculations. We start with the most difficult of the three.
\begin{lem}
\label{lem:ddb}
Choose a cochain $b$ in $C^1(H, \mcH)$, and take 
\[a = \pi_{\mcH} \circ b\quad\text{and}\quad  a_2 = \pi \circ a.\]
Suppose that $a_2$ is a cocycle. Then $db$ is valued in $\mcH_1$, and
\[d(db) = -a \cup (f_{\mcH} \circ da).\]
\end{lem}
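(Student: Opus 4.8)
The plan is to prove the two assertions separately: the containment $db\in C^2(H,\mcH_1)$ is a short push-forward argument, while the displayed identity is a direct — if delicate — cochain computation inside the non-abelian group $\mcH$.

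First I would check that $db$ is valued in $\mcH_1$. Applying the projection $\pi_{\mcH}$, which is a $G_F$-equivariant homomorphism and hence commutes with the coboundary, gives $\pi_{\mcH}\circ db = da$, where on the right $d$ now denotes the ordinary abelian coboundary on $M$-valued cochains. Composing with $\pi$ yields $\pi\circ da = d(\pi\circ a) = da_2 = 0$, since $a_2$ is a cocycle by hypothesis; thus $da$ takes values in $\ker(\pi)=\iota(M_1)$, and therefore $db$ takes values in $\mcH_1 = \pi_{\mcH}^{-1}(\iota(M_1))$. Because $\mcH_1$ is abelian by Assumption \ref{ass:theta} (1), the symbol $d(db)$ makes sense as the abelian coboundary of the $\mcH_1$-valued $2$-cochain $db$; moreover the same argument applied to $\pi_{\mcH_1}$ shows $\pi_{\mcH_1}(d(db)) = d(da) = 0$, so $d(db)$ is automatically valued in $(F^s)^{\times}$, matching the right-hand side.

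The substance of the lemma is the identity for $d(db)$. Here I would expand
\[d(db)(\sigma, \tau, \rho) = {}^{\sigma}(db)(\tau, \rho)\cdot (db)(\sigma\tau, \rho)^{-1}\cdot (db)(\sigma, \tau\rho)\cdot (db)(\sigma, \tau)^{-1}\]
(the four factors lie in the abelian, $G_F$-stable subgroup $\mcH_1$ and hence commute) and substitute $db(\mu, \nu) = b(\mu)\cdot {}^{\mu}b(\nu)\cdot b(\mu\nu)^{-1}$ throughout. Passing to the abelian quotient $M=\mcH/(F^s)^{\times}$, all twelve resulting $b$-terms cancel in pairs — this is precisely $d^2=0$ for $M$-valued cochains. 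Lifting back to $\mcH$, the product therefore collapses to the product of the commutators accrued while moving each $b$-term past the others to meet its cancelling partner. Since $(F^s)^{\times}$ is central, every such commutator is the scalar $[g,h]=\langle \pi_{\mcH}(g),\, f_{\mcH}(\pi_{\mcH}(h))\rangle$, depending only on the $M$-components $a(\cdot)$, and this commutator pairing is bi-multiplicative and antisymmetric (reflecting $f_{\mcH}^{\vee}=-f_{\mcH}$). Collecting the commutators and using the $G_F$-equivariance of $f_{\mcH}$ to align the Galois twists with the cup-product convention of Notation \ref{notat:cup}, the accumulated scalar is exactly $-\big(a\cup (f_{\mcH}\circ da)\big)(\sigma,\tau,\rho)$.

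The main obstacle is this last bookkeeping: one must track, with correct signs, which pairs of $b$-terms are transposed and which commutator each transposition contributes, and then reorganize the resulting product of evaluation pairings into the single cup product $-a\cup(f_{\mcH}\circ da)$. The features that make this manageable are that $da$ is $\iota(M_1)$-valued (so the second argument of $f_{\mcH}$ lives in $M_1$ and the pairing is unambiguous), that all commutators are central and may therefore be collected freely, and that the antisymmetry together with the $G_F$-equivariance of the commutator form let one fold the various two-term commutators into the standard cup-product expression paired against $f_{\mcH}\circ da$.
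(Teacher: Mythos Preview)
Your approach is correct in principle and your first paragraph matches the paper exactly. For the identity itself, however, the paper avoids the bookkeeping you flag as the main obstacle by a single clever move that you may want to adopt.

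Rather than substituting all twelve $b$-terms and tracking every commutator, the paper first exploits the fact that the four factors $\sigma(db)(\tau,\rho)$, $(db)(\sigma\tau,\rho)^{-1}$, $(db)(\sigma,\tau\rho)$, $(db)(\sigma,\tau)^{-1}$ lie in the \emph{abelian} group $\mcH_1$ and hence may be permuted freely. It chooses the specific ordering
\[(db)(\sigma,\tau\rho)\cdot (db)(\sigma\tau,\rho)^{-1}\cdot (db)(\sigma,\tau)^{-1}\cdot \sigma(db)(\tau,\rho),\]
and only then expands each factor as a three-term product in $\mcH$. With this ordering, adjacent terms in the resulting twelve-term product in $\mcH$ cancel \emph{directly}: first $b(\sigma\tau\rho)^{-1}\cdot b(\sigma\tau\rho)$, then $b(\sigma\tau)^{-1}\cdot b(\sigma\tau)$. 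What remains is literally the single commutator
\[\big[b(\sigma),\, -\sigma\, db(\tau,\rho)\big] = -\big(a\cup(f_{\mcH}\circ da)\big)(\sigma,\tau,\rho).\]
So no accumulation of scattered commutators, no appeal to bilinearity or antisymmetry to reassemble them into a cup product --- just one commutator, already in the right shape. Your route would reach the same destination, but the paper's rearrangement is what makes the computation short enough to write down in full.
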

\begin{rmk}
In the exact sequence
\[0 \to \mcH_1 \to \mcH \to M_2 \to 0,\]
the abelian subgroup $\mcH_1$ is not typically central in $\mcH$. Because of this, there is no satisfactory way to define a map
\[H^1(H, M_2) \to H^2(H, \mcH_1);\]
after all, per the above proposition, the usual definition of such a map need not send cocycles to cocycles.  However, as noted by Serre  \cite[I, \S5]{MR1466966}, given $\phi$ in $H^1(H, M_2)$, this definition  instead  gives a cocycle class $\delta \phi$ in $H^2(H, \,{}_{\phi} M_1)$, where ${}_{\phi} M_1$ is the Serre twist of $M_1$ by $\phi$. The proof we give of Lemma \ref{lem:ddb} is quite similar to the proof Serre gives of the fact that $\delta \phi$ is a cocycle class.
\end{rmk}
\begin{proof}
Since $da$ lies in $C^1(H, \iota(M_1))$, we see that $db$ takes values in $\mcH_1$. We have
\begin{alignat*}{3}
d(db)(\sigma, \tau, \rho) = &\big(\sigma b(\tau) &&\cdot \sigma \tau b(\rho) &&\cdot \sigma b(\tau \rho)^{-1}\big)\\ \cdot&\big( b(\sigma\tau) &&\cdot \sigma \tau b(\rho) &&\cdot b(\sigma \tau \rho)^{-1}\big)^{-1}\\
\cdot & \big( b(\sigma)&& \cdot \sigma b(\tau\rho) &&\cdot b(\sigma \tau \rho)^{-1}\big)\\
\cdot & \big( b(\sigma)&& \cdot \sigma  b(\tau) &&\cdot b(\sigma \tau)^{-1}\big)^{-1}.
\end{alignat*}
These four terms lie in $\mcH_1$ and hence commute. Rearranging this expression gives
\begin{alignat*}{2}
&\big(b(\sigma) \cdot \sigma b(\tau \rho)\cdot b(\sigma \tau \rho)^{-1}\big) &&\,\cdot\,\big(b(\sigma \tau \rho)\cdot \sigma \tau b(\rho)^{-1} \cdot b(\sigma\tau)^{-1}\big)\\
 \cdot\,& \big(b(\sigma \tau) \cdot \sigma b(\tau)^{-1} \cdot b(\sigma)^{-1}\big) &&\,\cdot\, \big(\sigma b(\tau) \cdot \sigma \tau b(\rho) \cdot \sigma b(\tau \rho)^{-1}\big).
\end{alignat*}
Cancelling $b(\sigma \tau\rho)^{-1}\cdot b(\sigma \tau \rho)$ and $b(\sigma \tau)^{-1} \cdot b(\sigma \tau)$ leaves
\[\big[b(\sigma), \,- \sigma db(\tau, \rho)\big] \,=\,-\big( a \cup (f_{\mcH} \circ da)\big)(\sigma, \tau, \rho),\]
as was claimed.
\end{proof}

The next basic cochain calculation is a simple generalization of Zarhin's identity \eqref{eq:Zark}.
\begin{lem}
\label{lem:Zark_2}
Choose cochains $b, b'$ in $C^1(H, \mcH)$, and take
\[a = \pi_{\mcH} \circ b \quad\text{and}\quad a' = \pi_{\mcH} \circ b'.\]
Suppose $da' = 0$. Then $db'$ is valued in $(F^s)^{\times}$, and
\[d(b \cdot b') = db \cdot db' \cdot \left(a' \,\cup \,(f_{\mcH} \circ a)\right).\]
\end{lem}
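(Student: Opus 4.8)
The plan is to verify the identity directly at the cochain level, evaluating both sides on an arbitrary pair $(\sigma,\tau)\in H^2$ and exploiting the centrality of $(\Fsep)^{\times}$ in $\mcH$. First I would dispatch the preliminary claim: applying $\pi_{\mcH}$ to $db'$ gives $da'=0$ by hypothesis, so $db'$ takes values in $\ker \pi_{\mcH}=(\Fsep)^{\times}$ and is therefore central. Likewise the cup-product term $a'\cup(f_{\mcH}\circ a)$ is $(\Fsep)^{\times}$-valued, as the evaluation pairing $\langle\,,\,\rangle$ lands in $(\Fsep)^{\times}$, hence also central; so the right-hand side is unambiguous up to reordering its central factors.

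Next I would expand the left-hand side. Writing $(b\cdot b')(\sigma)=b(\sigma)\cdot b'(\sigma)$ and using that $G_F$ acts on $\mcH$ by group automorphisms, I obtain
\[d(b\cdot b')(\sigma,\tau)=b(\sigma)\, b'(\sigma)\, \sigma b(\tau)\, \sigma b'(\tau)\, b'(\sigma\tau)^{-1}\, b(\sigma\tau)^{-1}.\]
The goal is to rearrange this into $db(\sigma,\tau)\cdot db'(\sigma,\tau)$ times the cup-product value. The only adjacent pair that fails to commute is $b'(\sigma)\cdot \sigma b(\tau)$: the three factors constituting $db'$ are central, so once grouped together they slide freely past the remaining terms. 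The crux is therefore the single commutator $[b'(\sigma),\,\sigma b(\tau)]$.

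I would evaluate this commutator using the defining property of the associated map $f_{\mcH}$ from Definition \ref{defn:theta}, namely $[g,h]=\langle \pi_{\mcH}(g),\, f_{\mcH}(\pi_{\mcH}(h))\rangle$. Since $\pi_{\mcH}(b'(\sigma))=a'(\sigma)$ and $\pi_{\mcH}(\sigma b(\tau))=\sigma a(\tau)$, and since $f_{\mcH}$ is $G_F$-equivariant, this yields
\[[b'(\sigma),\, \sigma b(\tau)]=\langle a'(\sigma),\, f_{\mcH}(\sigma a(\tau))\rangle=\langle a'(\sigma),\, \sigma (f_{\mcH}\circ a)(\tau)\rangle=\big(a'\cup(f_{\mcH}\circ a)\big)(\sigma,\tau),\]
which is exactly the required central factor, with orientation matching the cup-product convention of Notation \ref{notat:cup}. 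Commuting $b'(\sigma)$ past $\sigma b(\tau)$ thus produces precisely this term; pulling the resulting central factor to the front, recognizing the middle three entries as $db'(\sigma,\tau)$, and sliding this (central) factor past $b(\sigma\tau)^{-1}$ leaves $db(\sigma,\tau)\cdot db'(\sigma,\tau)$ and assembles the right-hand side. The main obstacle is purely the noncommutative bookkeeping—tracking the one swap and confirming that the isolated commutator matches the stated cup product on the nose, including its orientation; once that swap is made, centrality of $db'$ renders the remainder formal. This calculation is the cochain-level refinement of Zarhin's identity \eqref{eq:Zark}, to which it specializes upon passing to cohomology with $b,b'$ taken to be cocycle lifts.
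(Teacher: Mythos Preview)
Your proof is correct and follows essentially the same approach as the paper: expand $d(b\cdot b')(\sigma,\tau)$, swap $b'(\sigma)$ past $\sigma b(\tau)$ at the cost of the commutator $[b'(\sigma),\sigma b(\tau)]=(a'\cup(f_{\mcH}\circ a))(\sigma,\tau)$, recognize $b'(\sigma)\cdot\sigma b'(\tau)\cdot b'(\sigma\tau)^{-1}=db'(\sigma,\tau)$, and use centrality of $db'$ and of the cup-product term to rearrange into the stated product. Your explicit verification that $f_{\mcH}$'s $G_F$-equivariance makes the commutator match the cup-product convention of Notation~\ref{notat:cup} is a helpful detail the paper leaves implicit.
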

\begin{proof}
Since $da' = 0$, it is clear that $db'$ is valued in $(F^s)^{\times}$. We have
\[db(\sigma, \tau) = b(\sigma) \cdot b'(\sigma) \cdot \sigma b(\tau) \cdot \sigma b'(\tau) \cdot b'(\sigma \tau)^{-1} \cdot b(\sigma \tau)^{-1}.\]
This equals
\begin{align*}
&b(\sigma) \cdot \big[b'(\sigma), \,\sigma b(\tau)\big] \cdot \sigma b(\tau) \cdot b'(\sigma) \cdot \sigma b'(\tau) \cdot b'(\sigma \tau)^{-1} \cdot b(\sigma \tau)^{-1}\\
=\,&b(\sigma) \cdot (a' \cup (f_{\mcH} \circ a))(\sigma, \tau)\cdot  \sigma b(\tau) \cdot db(\sigma, \tau)  \cdot b(\sigma \tau)^{-1}.
\end{align*}
Since both $a' \cup (f_{\mcH} \circ a)$ and $db$ are valued in the center of $\mcH$, this equals 
\[db(\sigma, \tau) \cdot db'(\sigma, \tau) \cdot  (a' \cup (f_{\mcH} \circ a))(\sigma, \tau),\]
giving the claim.
\end{proof}

The final basic calculation gives a cochain analogue for a special case of the compatibility between   the Ext product and  the cup product. We abbreviate $\res_H (\overline{\psi}_{\textup{PS}})$ as $\overline{\psi}_{\textup{PS}, H}$.
\begin{lem}
\label{lem:dsf}
Choose $k \ge 0$, and suppose $h$ is in $C^k(H, M_1)$. Then
\[d(s \circ h) = s \circ dh \,+\, \overline{\psi}_{\textup{PS}, H} \cup (f_{\mcH} \circ h).\]
\end{lem}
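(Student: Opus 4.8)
The plan is to prove the identity by a single direct computation with the inhomogeneous coboundary, the whole content being that $s$ is a genuine group homomorphism $M_1 \to \mcH_1$ but \emph{not} $G_F$-equivariant, and that its failure to be equivariant is recorded exactly by $\overline{\psi}_{\textup{PS}}$. The key structural input I would use is Assumption \ref{ass:theta}(1), which tells us $\mcH_1$ is abelian. This is what allows the $\mcH_1$-valued cochains $s\circ h$ and $s\circ dh$ to be manipulated with the ordinary additive coboundary formula of Notation \ref{notat:contcoch}, rather than the non-abelian formula used for $\mcH$-valued cochains in Lemmas \ref{lem:ddb} and \ref{lem:Zark_2}, and it is what lets $s$ be pulled through an additive coboundary termwise.

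Concretely, I would first expand $d(s\circ h)$ using the additive formula on the (abelian) group $\mcH_1$ with its $G_F$-action:
\[
d(s\circ h)(\sigma_1,\dots,\sigma_{k+1}) = \sigma_1\, s\big(h(\sigma_2,\dots,\sigma_{k+1})\big) + \sum_{j=1}^{k}(-1)^j s\big(h(\dots,\sigma_j\sigma_{j+1},\dots)\big) + (-1)^{k+1} s\big(h(\sigma_1,\dots,\sigma_k)\big).
\]
Next I would expand $dh$ as an $M_1$-valued coboundary and apply the homomorphism $s$ to it, using that $s$ preserves the group operation to distribute it over every summand; this produces the \emph{same} expression except that the leading term reads $s\big(\sigma_1 h(\sigma_2,\dots,\sigma_{k+1})\big)$ in place of $\sigma_1\, s\big(h(\sigma_2,\dots,\sigma_{k+1})\big)$. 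Subtracting, the inner sum and the final term cancel term-by-term, leaving only the two leading terms, so that with $m := h(\sigma_2,\dots,\sigma_{k+1})$ one has
\[
\big(d(s\circ h) - s\circ dh\big)(\sigma_1,\dots,\sigma_{k+1}) = \sigma_1\, s(m) - s(\sigma_1 m).
\]

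The final step is to recognize this surviving term as the cup-product correction appearing on the right-hand side. Substituting $m' = \sigma_1 m$ into the defining formula $\overline{\psi}_{\textup{PS}}(\sigma)(m') = \sigma s(\sigma^{-1}m') - s(m')$ yields $\overline{\psi}_{\textup{PS}}(\sigma_1)(\sigma_1 m) = \sigma_1 s(m) - s(\sigma_1 m)$, i.e. the residual above equals $\langle \overline{\psi}_{\textup{PS},H}(\sigma_1),\, \sigma_1\, h(\sigma_2,\dots,\sigma_{k+1})\rangle$; unwinding the convention of Notation \ref{notat:cup} for the evaluation pairing, this is precisely the value of the cup-product term $\overline{\psi}_{\textup{PS},H}\cup(f_{\mcH}\circ h)$ on the right-hand side of the lemma, which completes the argument. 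I do not expect a deep obstacle here: the computation is bookkeeping. The two points that require genuine care are (i) justifying that $s\circ h$ obeys the additive coboundary and that $s$ commutes with $dh$ — both consequences of $\mcH_1$ being abelian and $s$ being a homomorphism — and (ii) the telescoping cancellation together with the alignment of the $G_F$-action and the pairing/sign conventions, which is where a stray sign or index shift is easiest to introduce.
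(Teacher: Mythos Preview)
Your proposal is correct and follows essentially the same approach as the paper's proof: expand $d(s\circ h)$ and $s\circ dh$ using the additive coboundary formula, observe that all terms except the leading $\sigma_1$-action term cancel because $s$ is a homomorphism, and identify the remaining difference $\sigma_1 s(m) - s(\sigma_1 m)$ with $\langle \overline{\psi}_{\textup{PS}}(\sigma_1),\, \sigma_1 m\rangle$ via the defining formula for $\overline{\psi}_{\textup{PS}}$. The paper's proof is terser but structurally identical; your added remarks about why $\mcH_1$ being abelian justifies the additive coboundary are correct but not strictly needed for the write-up.
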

\begin{proof}
From the definition in Notation \ref{notat:contcoch}, we find that 
\begin{alignat*}{2}
&d(s \circ h)(\sigma_1, \dots, \sigma_{k+1}) \, &&-\, s\left(dh(\sigma_1, \dots, \sigma_{k+1}) \right)\\
 = \,\,&\sigma_1\left(s \left(h(\sigma_2, \dots, \sigma_{k+1})\right)\right)  \,&&-\, s\left(\sigma_1 \left(h(\sigma_2, \dots, \sigma_{k+1})\right)\right)\\\
=\,\,& \Big\langle\overline{\psi}_{\text{PS}}(\sigma_1),\,\,\, \sigma_1h(\sigma_2, \dots, &&\sigma_{k+1})\Big\rangle,
\end{alignat*}
where $\sigma_1, \dots, \sigma_k$ are arbitrary elements of $H$. This gives the result.
\end{proof}

These three basic lemmas combine to give the following.
\begin{prop}
\label{prop:theta_compound}
Choose $b$ in $C^1(H, \mcH)$, and take
\[a = \pi_{\mcH} \circ b\quad\text{and}\quad  a_2 = \pi \circ a.\]
Suppose $da_2 = 0$. Then $da$ is valued in $\iota(M_1)$, and we may choose $A_1 \in C^2(H, M_1)$ so
\[\iota \circ A_1 = da.\]
\begin{enumerate}
\item The cochain $db\, -\, s \circ A_1$ is valued in $(F^s)^{\times}$, and
\[d\left(db\, -\, s \circ A_1\right) = \left(f_2 \circ a_2\, -\, \overline{\psi}_{\textup{PS}, H}\right) \cup A_1.\] 
\item Choose $b'$ in $C^1(H, \mcH)$, and take
\[a' = \pi_{\mcH} \circ b'.\]
Suppose that 
\[\pi \circ a' = a_2 \quad\text{and}\quad da' = 0.\]
There is then $a_1$ in $C^1(H, M_1)$ so
\[a - a' = \iota \circ a_1,\]
and we have
\[db' \, \equiv \, \left(db \,-\, s \circ A_1\right) \,+\,\left(f_2 \circ a_2 - \overline{\psi}_{\textup{PS}, H}\right) \cup a_1 \quad\textup{mod } B^2(H, (F^s)^{\times}).\]
\end{enumerate}
\end{prop}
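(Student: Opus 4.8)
The proof will be a direct cochain computation that assembles the three preceding lemmas. First I would dispose of the preamble: since $\pi$ is a homomorphism, $\pi\circ da = d(\pi\circ a) = da_2 = 0$, so $da$ takes values in $\ker\pi=\iota(M_1)$ and, as $\iota$ is injective, there is a unique $A_1\in C^2(H,M_1)$ with $\iota\circ A_1=da$. Applying $\iota$ to $dA_1$ gives $\iota\circ dA_1 = d(da)=0$, so in fact $A_1$ is a cocycle; this observation drives Part (1).

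For Part (1), Lemma \ref{lem:ddb} already gives that $db$ is valued in $\mcH_1$ and that $d(db) = -\,a\cup(f_{\mcH}\circ da)$. Since $s\circ A_1$ is also valued in $\mcH_1$ and $\pi_{\mcH}(db) = da = \iota\circ A_1 = \pi_{\mcH}(s\circ A_1)$, the difference $db - s\circ A_1$ lands in $(F^s)^{\times}$, which is the first assertion. To compute its coboundary I would use that $\mcH_1$ is abelian to write $d(db - s\circ A_1) = d(db) - d(s\circ A_1)$, evaluate $d(s\circ A_1) = \overline{\psi}_{\textup{PS},H}\cup A_1$ via Lemma \ref{lem:dsf} (the term $s\circ dA_1$ vanishing because $dA_1=0$), and rewrite the term from Lemma \ref{lem:ddb} as $-\,a\cup(f_{\mcH}\circ\iota\circ A_1) = (f_2\circ a_2)\cup A_1$. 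This last identity is the only real content of Part (1): writing $B(v,v')=\langle v,f_{\mcH}(v')\rangle$, it follows from the commutative diagram \eqref{eq:theta_morph} (equivalently $f_{\mcH}\circ\iota = \pi^{\vee}\circ f_1$ and $\iota^{\vee}\circ f_{\mcH} = f_2\circ\pi$), the adjointness of $(\iota,\iota^{\vee})$ and $(\pi,\pi^{\vee})$ under the evaluation pairings, and the antisymmetry $f_{\mcH}^{\vee}=-f_{\mcH}$ of Definition \ref{defn:theta}, which together give $\langle f_2(\pi\tilde x),y\rangle = \langle f_{\mcH}(\tilde x),\iota y\rangle = B(\iota y,\tilde x) = -B(\tilde x,\iota y)$. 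Combining the two pieces yields $d(db - s\circ A_1) = (f_2\circ a_2 - \overline{\psi}_{\textup{PS},H})\cup A_1$.

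For Part (2), first note $\pi\circ(a-a') = a_2-a_2 = 0$, so $a-a' = \iota\circ a_1$ for some $a_1\in C^1(H,M_1)$, and applying $\iota$ to $da_1$ shows $da_1 = A_1$. The key reduction is that, since $b'$ and $\hat b := b\cdot(s\circ a_1)^{-1}$ both project under $\pi_{\mcH}$ to the cocycle $a'$, they differ by an $(F^s)^{\times}$-valued $1$-cochain, so $db' \equiv d\hat b \pmod{B^2(H,(F^s)^{\times})}$ and it suffices to compute $d\hat b$. I would expand $d(b\cdot(s\circ a_1)^{-1})$ by the same commutator bookkeeping used in the proof of Lemma \ref{lem:Zark_2}, now carried out with the non-central but abelian factor $s\circ a_1$, feeding in $d(s\circ a_1) = s\circ A_1 + \overline{\psi}_{\textup{PS},H}\cup a_1$ from Lemma \ref{lem:dsf}. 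This produces the term $db - s\circ A_1$ of Part (1), the term $-\,\overline{\psi}_{\textup{PS},H}\cup a_1$, and two further cup-product terms coming from the commutator and the conjugation; the diagram identities of Part (1), together with the cocycle relation $da'=0$, are then used to collect these into $(f_2\circ a_2)\cup a_1$ up to the coboundary of an explicit $(F^s)^{\times}$-valued $1$-cochain, in the spirit of the $a_v$-trick in the proof of the duality identity (Theorem \ref{thm:dual_id}). A useful consistency check, which also pins down exactly what must be shown, is that $\xi := f_2\circ a_2 - \overline{\psi}_{\textup{PS},H}$ is a cocycle (using $da_2=0$ and that $\overline{\psi}_{\textup{PS}}$ is a cocycle) and that $A_1 = da_1$; by \eqref{eq:cobcup} this forces the claimed right-hand side $(db - s\circ A_1) + \xi\cup a_1$ to be a cocycle, matching the fact that $db'$ is one.

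The main obstacle is the cochain computation in Part (2). Because $s\circ a_1$ takes values in the abelian subgroup $\mcH_1$ but is neither central in $\mcH$ nor a cocycle, neither Lemma \ref{lem:Zark_2} nor the additive coboundary formula applies off the shelf; one must track the extra commutator and conjugation terms by hand and then recognize the residual discrepancy from $(f_2\circ a_2)\cup a_1$ as an explicit coboundary. Everything else—Part (1), the reductions opening Part (2), and the bookkeeping of which cochains land in $\mcH_1$ versus $(F^s)^{\times}$—is routine once the three lemmas are in place.
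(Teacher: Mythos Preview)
Your Part (1) is correct and essentially identical to the paper's argument.

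For Part (2) your reduction $db'\equiv d\hat b$ with $\hat b=b\cdot(s\circ a_1)^{-1}$ is valid, and the computation you outline could in principle be pushed through, but you have made it harder than necessary and have not actually carried out the step you yourself flag as the main obstacle. The paper avoids that obstacle entirely by choosing a different factorization: instead of comparing $\hat b$ with $b'$, it compares $b\cdot(b')^{-1}$ with $s\circ a_1$. Both of these cochains are valued in the abelian group $\mcH_1$ and both project to $a_1$ under $\pi_{\mcH_1}$, so they differ by an $(F^s)^{\times}$-valued $1$-cochain and hence
\[
d\big(b\cdot(b')^{-1}\big)\ \equiv\ d(s\circ a_1)\ =\ s\circ A_1\ +\ \overline{\psi}_{\textup{PS},H}\cup a_1\pmod{B^2(H,(F^s)^{\times})},
\]
the equality coming from Lemma \ref{lem:dsf}. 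Now write $b=\big(b\cdot(b')^{-1}\big)\cdot b'$. Since the \emph{second} factor $b'$ has $da'=0$, Lemma \ref{lem:Zark_2} applies off the shelf and gives
\[
db\ =\ d\big(b\cdot(b')^{-1}\big)\ +\ db'\ +\ a'\cup\big(f_{\mcH}\circ(a-a')\big).
\]
The cup-product term is rewritten as $-(f_2\circ a_2)\cup a_1$ by exactly the diagram identities you already used in Part (1), and rearranging yields the claim.

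The point is that your factorization puts the ``non-cocycle'' piece $s\circ a_1$ as one of the two factors in the product whose coboundary you must expand, which is precisely the situation Lemma \ref{lem:Zark_2} does not cover; the paper's factorization places $b'$ (which \emph{does} satisfy $da'=0$) as the second factor, so Lemma \ref{lem:Zark_2} applies directly and no commutator bookkeeping needs to be redone.
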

\begin{proof}
For the first part, we note that $\pi_{\mcH_1} \circ db$ and $\pi_{\mcH_1} \circ s \circ A_1$ both equal $A_1$, so this difference lies in $C^2(H, (F^s)^{\times})$. Since $dA_1 = 0$, Lemma \ref{lem:dsf} and Lemma \ref{lem:ddb} give
\[d\left(db\, -\, s \circ A_1\right) \,=\, -\overline{\psi}_{\textup{PS}, H} \cup A_1\, -\, a \cup (f_{\mcH} \circ da).\]
Since $f_{\mcH}^{\vee} = -f_{\mcH}$, we have
\[a \cup (f_{\mcH} \circ da) \, =\, - (f_2\circ a_2) \cup A_1,\]
and the part follows.

For the second part, we note that $b \cdot (b')^{-1}$ is valued in $\mcH_1$ and projects to $a_1$ under $\pi_{\mcH_1}$. We have
\[d(b \cdot (b')^{-1}) \,\equiv \, d (s \circ a_1) \quad\text{mod } B^2(H, (F^s)^{\times}).\]
Since $da_1 = A_1$, Lemma \ref{lem:dsf} gives
\[d(b \cdot (b')^{-1}) \,\equiv \, s \circ A_1 \,+\,\overline{\psi}_{\textup{PS}, H} \cup a_1.\]
But Lemma \ref{lem:Zark_2} then gives
\begin{align*}
db\, =&\, db' \,+\, d(b \cdot (b')^{-1}) \,+\, a' \cup (f_{\mcH} \circ (a - a') )\\
\equiv&\, db' \,+\, s \circ A_1 \,+\, \overline{\psi}_{\textup{PS}, H} \cup a_1\,- \, (f_2 \circ a_2) \cup  a_1 \quad\text{mod } B^2(H, (F^s)^{\times}),
\end{align*}
and the proposition follows.
\end{proof}

\begin{proof}[Proof of Theorem \ref{thm:theta_main}]
We will calculate
\[\CTPb{E^{\vee}}\big(\psi_{\textup{PS}} - f_2(\phi),\,\, \beta(\phi)\big)\]
where $\beta\colon M_2 \to (M_2^{\vee})^{\vee}$ is the evaluation isomorphism. The result   then follows from the duality identity and Lemma \ref{lem:weak_symmetry}. In the infinite case, we note that both the cocycle definition of $\CTPb{E^{\vee}}$, and the resulting Lemma \ref{lem:weak_symmetry}, carry over to sequences in $\textup{SMod}_{F,\ell}^\infty$ without issue.

So choose $\overline{\phi}$ in $Z^1(G_F, M_2)$ representing the class of $\phi$, and choose $g$ in    $C^1(G_F, M)$ and $b$ in  $C^1(G_F, \mcH)$ satisfying
\[\overline{\phi} = \pi \circ g \quad\text{and}\quad g = \pi_{\mcH} \circ b.\] 
At each place, choose $\overline{\phi}_{v, M}$ representing $\phi_{v, M}$, and whose image in $M_2$ agrees with the restriction of $\overline{\phi}$. From the first part of Proposition \ref{prop:theta_compound}, we   have
\[d(db \,-\, s \circ \iota^{-1} \circ dg) =  (f_2 \circ \overline{\phi} \,-\, \overline{\psi}_{\textup{PS}}) \cup (\iota^{-1} \circ dg),\]
so that
\[s \circ \iota^{-1} \circ dg \, -\, db \in C^2(G_F, (F^s)^{\times})\]
is an acceptable choice for $\eta$ in the context of Definition \ref{defn:CTPb}. We then have
\[\CTPb{E^{\vee}}\big(\psi_{\textup{PS}} - f_2(\phi),\,\, \beta(\phi)\big) \,=\, \sum_v \inv_v(\gamma^{\text{bis}}_v)\]
where
\[\gamma^{\text{bis}}_v \,=\, \res_{G_v}\left( \overline{\psi}_{\textup{PS}} \,-\, f_2 \circ \overline{\phi} \right) \,\cup\, \left(\overline{\phi}_{v, M} - \res_{G_v}(g) \right) \,+\, \res_{G_v}\left(db \,-\, s \circ \iota^{-1} \circ dg\right).\] 
From proposition \ref{prop:theta_compound}, the cocycle $\gamma^{\text{bis}}_v$ is in the class $q_{\mcH,\, G_v}(\phi_{v, M})$, and we have
\[\CTPb{E^{\vee}}\big(\psi_{\textup{PS}} - f_2(\phi),\,\, \beta(\phi)\big)  = q_{\text{loc-sum}}((\phi_{v, M})_v) \]
as claimed.
\end{proof}

\subsection{Poonen--Stoll classes for Flach's pairing} \label{sec:poonen_stoll_flach}
We now have all the tools needed to prove Theorem \ref{thm:poonen-stoll_into}. 

Choose a prime $\ell$ not equal to the characteristic of $F$. Choose a free finitely-generated $\Z_{\ell}$ module $T$ with a continuous action of $G_F$. Take $V = T \otimes \QQ_{\ell}$, and decorate $V$ with local conditions as in Definition \ref{defn:SModFinf}. We assume these local conditions form a $\QQ_{\ell}$-vector space.

In addition, suppose there is an alternating homomorphism $\lambda: V \to V^{\vee}$ extending to a morphism in $\SMod_{F, {\ell}}^{\infty}$ under which $T$ is identified with its orthogonal complement. In this case, the Cassels--Tate pairing for
\[E= \left[0 \to T \to V \to V/T \to 0\right]\]
reduces to a pairing between $\Sel\, V/T$ and itself. Flach proved that this pairing is antisymmetric; we may recover this result using the duality identity and naturality.  
We are now concerned with the obstruction to this pairing being alternating. The pairing is automatically alternating if $\ell \ne 2$, so we focus on the case $\ell = 2$. 

Composing the evaluation pairing  on $V \otimes V^{\vee}$ with $\lambda$ defines an alternating pairing on $V$. We denote this by $\lambda$ also, and construct the class $c_{\lambda} \in H^1(G_v, V/T)$ from this pairing as in Definition \ref{defn:PS_intro}. The following is then the precise form of Theorem \ref{thm:poonen-stoll_into}.

\begin{thm} \label{thm:poonen-stoll_precise}
Given $T$, $E$, $\lambda$, and $c_{\lambda}$ as above, the element $c_{\lambda}$ lies in $\Sel\, V/T$, and we have
\[\CTP_E\left(\phi, \,\lambda(\phi)\right) = \CTP_E\left(\phi, \,\lambda(c_{\lambda})\right)\quad\text{for all } \,\phi \in \Sel \, V/T.\]
\end{thm}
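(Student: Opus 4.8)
The plan is to derive Theorem~\ref{thm:poonen-stoll_precise} from the general Theorem~\ref{thm:theta_main} by exhibiting a theta group for $V$ whose associated map is $\lambda$, and then matching the resulting abstract Poonen--Stoll class against the explicit class $c_\lambda$. First I would set $M = V$, $M_1 = T$, and $M_2 = V/T$, and record that the self-duality hypothesis makes $\lambda \colon V \to V^\vee$ an isomorphism in $\SMod_{F,2}^\infty$ carrying $E$ to $E^\vee$. Applying the five lemma to a diagram of the shape \eqref{eq:theta_morph}, it restricts (because $T = T^\perp$) to an isomorphism $f_1 \colon T \isoarrow (V/T)^\vee$ and descends to an isomorphism $f_2 \colon V/T \isoarrow T^\vee$. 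This $f_2$ is the map denoted $\lambda$ in the statement, so the identifications $\lambda(\phi) = f_2(\phi)$ and $\lambda(c_\lambda) = f_2(c_\lambda)$ let me read the whole theorem in terms of $f_2$.

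Next I would construct, via the algebraic method of Section~\ref{ssec:div_theta}, a theta group
\[0 \to (\Fsep)^\times \to \mcH \to V \to 0\]
for $V$ whose associated map $f_{\mcH}$ (Definition~\ref{defn:theta}) equals $\lambda$, arranged so that its reduction encodes the pairing $P_\lambda$ and the extension $U$ of Definition~\ref{defn:PS_intro}. Since $T$ is its own orthogonal complement under $\lambda$, the commutator pairing vanishes on $T$, so the preimage $\mcH_1$ of $T$ is commutative; this is Assumption~\ref{ass:theta}(1). I would then verify that $\msW$ is isotropic with respect to $\mcH$, i.e. $q_{\textup{loc-sum}}(\msW) = 0$; by the Remark following Assumption~\ref{ass:theta} this yields parts (2) and (3) for free. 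This is the step where the self-duality of each $\msW_v$ as a maximal isotropic $\QQ_2$-subspace, together with the fact that $\mcH$ supplies a genuine quadratic refinement of $\lambda$ controlling the prime-$2$ (Pontryagin-square) terms that enter through Zarhin's identity \eqref{eq:Zark}, is essential. With Assumption~\ref{ass:theta} and isotropy in hand, Theorem~\ref{thm:theta_main} gives
\[\CTP_E\big(\phi,\, f_2(\phi)\big) = \CTP_E\big(\phi,\, \psi_{\textup{PS}}\big)\quad\text{for all } \phi \in \Sel\, V/T,\]
while Proposition~\ref{prop:PS_Sel} places $\psi_{\textup{PS}}$ in $\Sel\, T^\vee$.

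It then remains to identify $\psi_{\textup{PS}}$ with $f_2(c_\lambda) = \lambda(c_\lambda)$; since $f_2$ is an isomorphism in $\SMod_{F,2}^\infty$, this simultaneously shows $c_\lambda \in \Sel\, V/T$ and finishes the proof. To do this I would choose a set-theoretic quadratic refinement $q \colon T/2T \to \mu_2$ of $P_\lambda$, use it both to write down the tautological section $x \mapsto (q(x), x)$ of $U$ and to build the section $s \colon T \to \mcH_1$ of \eqref{eq:H1_section}, and then compare cocycles: the class of $U$ in $H^1(G_F, (T/2T)^\vee)$ is represented by $\sigma \mapsto \big(x \mapsto \sigma q(\sigma^{-1}x)\, q(x)^{-1}\big)$, and transporting this along $P_\lambda$, along $T/2T \cong (V/T)[2] \hookrightarrow V/T$, and finally along $f_2 = \lambda$ should reproduce the cocycle $\overline{\psi}_{\textup{PS}}$ of \eqref{eq:PS_rep}. \textbf{The main obstacle is this last identification together with the construction of $\mcH$}: one must pin down the theta group for the infinite module $V$ precisely enough that the restriction of its section $s$ to the $2$-torsion data recovers $q$, and then carry out the bookkeeping relating the non-abelian (Serre) connecting map defining $\psi_{\textup{PS}}$ to the explicit cochain recipe for $c_\lambda$. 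In the case of the $2$-adic Tate module of a principally polarized abelian variety this matching is precisely the comparison recorded in \cite[Remark 3.3]{PR11}.
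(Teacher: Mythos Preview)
Your overall strategy---construct a theta group for $V$ with associated map $\lambda$, verify Assumption~\ref{ass:theta} and isotropy, apply Theorem~\ref{thm:theta_main}, then match $\psi_{\textup{PS}}$ with $c_\lambda$---is exactly the paper's, and is correct.

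However, the paper's execution is considerably more direct than what you outline, and you should be aware of the simplifications. First, invoking Section~\ref{ssec:div_theta} is the wrong move: that machinery is built for \emph{finite} (or at least discrete) modules in which one cannot divide by $2$, which is precisely why one passes to $M[2\lambda]$ and needs the auxiliary data $(P_1,e)$. Here $V$ is a $\QQ_2$-vector space, so one may simply \emph{define} the theta group as the set $(F^s)^\times \times V$ with multiplication
\[(\alpha,v)\cdot(\alpha',v')=\big(\alpha\alpha'\,\langle \tfrac{1}{2}v,\,\lambda(v')\rangle,\; v+v'\big).\]
This has commutator pairing $\langle v,\lambda(v')\rangle$, so $f_{\mcH}=\lambda$ on the nose. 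Second, your isotropy argument is overcomplicated: with this explicit group, the obvious section $v\mapsto(1,v)$ gives $q_{\mcH,G_v}(\phi)=\tfrac{1}{2}\phi\cup\lambda(\phi)$, and this vanishes on $\msW_v$ because $\msW_v$ is a $\QQ_2$-subspace with $\lambda(\msW_v)\subseteq\msW_v^\perp$; no Pontryagin-square considerations are needed. Third, for the identification of $\psi_{\textup{PS}}$ with $c_\lambda$ there is no need to choose a non-equivariant quadratic refinement $q$ and chase cocycles. It suffices to observe that the pullback of $\mcH(V)$ along $T\hookrightarrow V$ is, as an extension of $T$ by $(F^s)^\times$, equal to the pullback of the extension $U$ of \eqref{eq:PS_exact_Flach_seq} along $T\twoheadrightarrow T/2T$; this is a one-line check using $\langle\tfrac{1}{2}t,\lambda(t')\rangle=P_\lambda(\bar t,\bar t')$ for $t,t'\in T$. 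Since $\psi_{\textup{PS}}$ and $c_\lambda$ are (after transport by $\lambda$) the classes of these two equal extensions, they coincide.
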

\begin{proof}
Take $\mcH(V)$ to be the group with underlying $G_F$-set $(F^s)^\times \times V$, and group operation 
\[(\alpha,v)\cdot (\alpha',v')=\big(\alpha\alpha'\left \langle \tfrac{1}{2}v, \lambda(v')\right \rangle, v+v'\big),\]
where $\left \langle ~,~\right \rangle$ denotes the evaluation pairing $V\otimes V^\vee \rightarrow (F^s)^\times$. The natural inclusion/projection on the level of $G_F$-sets realises $\mcH(V)$ as a central extension 
\begin{equation}\label{divisible_theta_group}
0\longrightarrow (F^s)^\times \longrightarrow \mcH(V)\longrightarrow V\longrightarrow 0,\end{equation}
whose associated commutator pairing  is given by 
\begin{equation}\label{eq:divisible_theta_group_pairing}
(v,v')\longmapsto \left \langle v,\lambda(v')\right \rangle ~~\quad v,v'\in V. 
\end{equation}
Our assumptions imply that $\mcH(V)$ gives a theta group for $V$ in the sense of Definition \ref{defn:theta}. Further, we see from Remark \ref{rmk:theta_iso_equiv} that the conditions of Assumption  \ref{ass:theta} are satisfied with respect to the sequence 
\[E= \left[0 \to T \to V \to V/T \to 0\right].\]
Let $\psi_{\textup{PS}}\in H^1(G_F,T^\vee)$ be the resulting Poonen--Stoll class. A direct computation shows that the pullback of \eqref{divisible_theta_group} along the inclusion of $T$ into $V$ agrees with the pullback of the sequence \eqref{eq:PS_exact_Flach_seq} along the projection  $T\rightarrow T/2T$. Consequently,   after identifying $H^1(G_F,T^\vee)$ with $H^1(G_F,V/T)$ via $\lambda$, we see that $c_\lambda$ agrees with $\psi_{\textup{PS}}$. The result now follows from Theorem \ref{thm:theta_main}.
\end{proof}

\begin{rmk}
In the case that $T$ is the Tate module for a principally polarized variety over a global field of characteristic other than $2$, our definition for $c_{\lambda}$ agrees with the definition of this cocycle class by Poonen and Stoll \cite[Theorem 5]{Poon99}. This can be proved using \cite[Remark 3.3]{PR11} and the diagram \cite[(16)]{PR11}.

This leaves the case of abelian varieties over characteristic $2$, which the geometric methods of Poonen and Stoll can handle but which our methods cannot. To deal with these examples, we would need to expand our theory to handle finite group schemes whose order is divisible by the characteristic of $F$. We will return to this in future work \cite{MoSm21b}.
\end{rmk}

\subsection{Theta groups for finite modules with antisymmetric structure}
\label{ssec:div_theta}

The construction of a theta group in the proof of Theorem \ref{thm:poonen-stoll_precise} made use of the fact that the starting module was a $\QQ_2$-vector space. For applications, it will be useful to have a generalization of this construction that applies to finite modules. As we will see in Section \ref{sec:theta_line_bundle}, this construction recovers Mumford's construction of theta groups associated to symmetric line bundles on abelian varieties over fairly general fields.  

\begin{defn}
\label{defn:div_theta}
Fix any field $F$ of characteristic not equal to $2$, fix a separable closure $\Fsep$ of $F$, and fix a discrete $G_F = \Gal(\Fsep/F)$-module $M$. We assume that $M[2]$ is finite.

Choose a finite $G_F$-submodule $M_0$ of $M$, and take 
$\lambda \colon M \to M/M_0$
to be the standard projection. For $k \ge 0$, we take 
$2^k \lambda\colon M \to M/M_0$
to be the composition of multiplication by $2^k$ with $\lambda$, and denote by  $M[2^k \lambda]$ the kernel of this map.

We assume that the final map in the exact sequence
\[0 \to M[2] \to M[2\lambda] \xrightarrow{\,\,\cdot 2\,\,} M[\lambda]  \]
is surjective, and we suppose that we have been given
\begin{itemize}
\item an antisymmetric $G_F$-equivariant pairing
\[P_1: M[2\lambda] \otimes M[2\lambda] \to (\Fsep)^{\times}\,\, \text{ and}\]
\item a $G_F$-equivariant map $e: M[2] \to \pm 1$ satisfying
\[e(x+y) e(x)^{-1}e(y)^{-1} = P_1(x, y) \quad\text{for all }\, x, y \in M[2].\]
\end{itemize}
Under these circumstances, there is a unique pairing
\[P_0: M[\lambda] \otimes M[\lambda] \rightarrow (\Fsep)^{\times}\]
satisfying
\[P_0(2m, 2n) = P_1(m, n)^2 \quad\text{for all } \, m, n \in M[2\lambda].\]
This pairing is alternating and $G_F$-equivariant.

Following the construction of \cite[Section 2.1]{PR11}, we define a group $U$ with underlying  set 
$(\Fsep)^{\times} \times M[2\lambda],$
 equipped with the same associated $G_F$-action, but group operation
\[(\alpha, m) \cdot (\alpha', m') \,=\,  \big(\alpha \alpha' P_1(m, m'),\, m + m'\big).\]
For any $(\alpha, m)$ and $(\alpha', m')$ in this group, we calculate
\[(\alpha, m)^{-1} = \big(\alpha^{-1}P_1(m, m), \,-m\big) \quad\text{and}\]
\[ (\alpha, m) (\alpha', m') (\alpha, m)^{-1} = \big(\alpha'  P_1(m, m')^2, \,m'\big).\]
From these basic properties, we can verify that
\[N=\big\{ (e(m),\, m) \in U \,:\,\, m \in M[2]\big\}\]
is a normal subgroup of $U$ stable under the action of $G_F$.  We then define
\[\mcalH(M[\lambda]) \, = \, U/N.\]
We have the following commutative diagram with exact rows:
\begin{equation}
\label{eq:thet_def}
\begin{tikzcd}
0 \arrow{r} & (\Fsep)^{\times} \arrow{r}\arrow[d, equals] & U \arrow[d] \arrow{r} &M[2\lambda]\arrow[d]  \arrow{r} &0 \\
0 \arrow{r} & (\Fsep)^{\times} \arrow{r} & \mcalH(M[\lambda]) \arrow{r} & M[\lambda]\arrow{r} &0.
\end{tikzcd}
\end{equation}
\end{defn}

The groups $U$ and $\mcalH(M[\lambda])$ are typically non-abelian, and we can consider the commutator pairings on either of the rows of \eqref{eq:thet_def}. From the calculation
\[(1, m) \cdot (1, m') \cdot (1, m)^{-1} \cdot (1, m')^{-1} = \big(P_1(m, m')^2, 0\big),\]
we see that this pairing takes the form $(m, \,m') \mapsto P_1(m, m')^2$ for the top row. On the bottom row, this pairing is then given by $(m, \,m') \mapsto P_0(m, m')$. 

\begin{ex}
\label{ex:theta_div}
Suppose now that $F$ is a global field, and choose $M$, $\lambda$, $e$, and $P_1$ as in the previous proposition. We assume the characteristic of $F$ does not divide $2\cdot \# M[\lambda]$.

The second row of \eqref{eq:thet_def} then exhibits $\mcH = \mcH(M[\lambda])$ as a theta group of $M[\lambda]$. The associated map
\[f_{\mcH} \colon M[\lambda] \to M[\lambda]^{\vee}\]
is determined by
\[P_0(m, m') = \langle m, \,f_{\mcH}(m')\rangle\]
for all $m, m'$ in $M[\lambda]$, where the pairing $\langle\,,\,\rangle$ is the standard evaluation pairing for $M$.

Now suppose we have chosen groups of local conditions $\msW_0$, $\msW_1$ so that
\[\left(M[2\lambda], \, \msW_1\right) \xrightarrow{\,\,\,\cdot 2\,\,\,}  \left(M[\lambda], \,\msW_0\right)\]
is a strictly epic morphism in $\SMod_F$. Suppose further that we have
\begin{equation}
\label{eq:P1isot}
\sum_v\inv_v(\phi_v \cup_{P_1} \phi_v) =0
\end{equation}
for all $(\phi_v)_v \in \msW_1$. Then the group of local conditions $\msW_0$ for $M[\lambda]$ is isotropic with respect to $\mcH$.  Indeed, per \cite[Corollary 2.8]{PR11},  the connecting map  associated to the top row of \eqref{eq:thet_def} sends the class of a $1$-cocycle $\phi$ to 
$\phi \cup_{P_1} \phi$.

The isotropy condition \eqref{eq:P1isot} is weaker than assuming that the $G_F$-homomorphism $M[2\lambda] \to M[2\lambda]^{\vee}$ corresponding to $P_1$ extends to a morphism
\[\left(M[2\lambda], \, \msW_1\right) \xrightarrow{\quad} \left(M[2\lambda], \, \msW_1\right)^{\vee}\]
in $\SMod_F$. This condition is satisfied for a wide variety of self-dual $G_F$-modules in $\SMod_F$, so Definition \ref{defn:div_theta} gives a robust method for constructing theta groups meeting the assumptions of Theorem \ref{thm:theta_main}.
\end{ex}

We  mention the following additional property enjoyed by the theta groups of Example \ref{ex:theta_div}.
\begin{prop}
Given $\mcH$ as in Example \ref{ex:theta_div} and a closed subgroup $H$ of $G_F$, the connecting map
\[q_{\mcH,H} \colon H^1(H, M[\lambda]) \to H^2(H, (\Fsep)^{\times})\]
is a quadratic form.
\end{prop}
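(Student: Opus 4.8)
The plan is to leverage the fact, already recorded in Notation~\ref{notat:theta_connecting}, that $q_{\mcH,H}$ is a quadratic \emph{map}: by Zarhin's identity \eqref{eq:Zark} its polarization $(\phi,\psi)\mapsto q_{\mcH,H}(\phi+\psi)-q_{\mcH,H}(\phi)-q_{\mcH,H}(\psi)$ equals the bilinear form $\phi\cup_M f_{\mcH}(\psi)$, which by the defining relation $P_0(m,m')=\langle m,f_{\mcH}(m')\rangle$ is just $\phi\cup_{P_0}\psi$. To upgrade this to a quadratic \emph{form}, the only remaining content is the homogeneity $q_{\mcH,H}(n\phi)=n^2 q_{\mcH,H}(\phi)$. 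Since the polarization is already bilinear, an elementary induction shows that this homogeneity is equivalent to the single polarization identity
\begin{equation}
\label{eq:qform_polar}
2\,q_{\mcH,H}(\phi) \,=\, \phi\cup_{P_0}\phi \qquad\text{for all } \phi\in H^1(H,M[\lambda]).
\end{equation}
So I would reduce the proposition to proving \eqref{eq:qform_polar}.

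To prove \eqref{eq:qform_polar} I would argue directly with cochains, exploiting the presentation $\mcH=U/N$ from \eqref{eq:thet_def}. Represent $\phi$ by a cocycle $a\in Z^1(H,M[\lambda])$. Using the assumed surjectivity of $M[2\lambda]\xrightarrow{\cdot 2}M[\lambda]$, lift $a$ to a continuous cochain $\tilde a\colon H\to M[2\lambda]$ with $2\tilde a=a$, and set $b(\sigma)$ to be the class of $(1,\tilde a(\sigma))\in U$ in $\mcH$. Then $\pi_{\mcH}\circ b=a$, so $q_{\mcH,H}(\phi)=[db]$. Writing $u=\tilde a(\sigma)$, $v=\sigma\tilde a(\tau)$, $w=\tilde a(\sigma\tau)$, a direct multiplication in $U$ gives that $(1,u)\cdot(1,v)\cdot(1,w)^{-1}$ equals $\big(P_1(u,v)\,P_1(w,w)\,P_1(u,w)^{-1}P_1(v,w)^{-1},\,z\big)$ with $z=u+v-w$; since $2z=da(\sigma,\tau)=0$, we have $z\in M[2]$. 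Reducing modulo the element $(e(z),z)\in N$ then shows that $db(\sigma,\tau)$ is the element $P_1(u,v)\,P_1(w,w)\,P_1(u,w)^{-1}P_1(v,w)^{-1}\,e(z)^{-1}$ of $(\Fsep)^{\times}$.

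The key step is to square this representative. Three facts conspire: $e(z)\in\{\pm1\}$, so $e(z)^{2}=1$; antisymmetry of $P_1$ forces $P_1(w,w)=\pm1$, so $P_1(w,w)^2=1$; and the defining relation $P_1(m,n)^2=P_0(2m,2n)$ converts each surviving square of a $P_1$-value into a $P_0$-value of the corresponding elements of $M[\lambda]$. Thus $db(\sigma,\tau)^2$ equals
\[
P_0\!\big(a(\sigma),\sigma a(\tau)\big)\cdot P_0\!\big(a(\sigma)+\sigma a(\tau),\,a(\sigma\tau)\big)^{-1}.
\]
Because $a$ is a cocycle, $a(\sigma)+\sigma a(\tau)=a(\sigma\tau)$, so the second factor is $P_0\big(a(\sigma\tau),a(\sigma\tau)\big)^{-1}=1$ by the alternation of $P_0$, while the first factor is exactly $(a\cup_{P_0}a)(\sigma,\tau)$. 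This establishes \eqref{eq:qform_polar}, and hence the proposition.

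I expect the main obstacle to be the nonabelian cochain bookkeeping in computing $db$ and in reducing it modulo $N$ to an honest element of $(\Fsep)^{\times}$. The conceptual crux, once that is in hand, is that passing to squares simultaneously annihilates the two $\pm1$ ambiguities (from $e$ and from $P_1(w,w)$) and promotes the antisymmetric pairing $P_1$ to the alternating pairing $P_0$, after which the cocycle condition together with alternation forces the correction term to vanish.
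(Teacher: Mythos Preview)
Your argument is correct, but it takes a different route from the paper's. Both proofs start from Zarhin's identity and reduce the quadratic-form condition to the single statement $q_{\mcH,H}(-\phi)=q_{\mcH,H}(\phi)$; you phrase this equivalently as $2q_{\mcH,H}(\phi)=\phi\cup_{P_0}\phi$ and then verify it by an explicit cochain computation, lifting through $U$ and tracking the reduction modulo $N$. The paper instead observes that the assignment $(\alpha,m)\bmod N\mapsto(\alpha,-m)\bmod N$ defines a $G_F$-equivariant automorphism $\gamma$ of $\mcH$ covering $-1$ on $M[\lambda]$ and the identity on $(\Fsep)^{\times}$, so functoriality of the connecting map immediately yields $q_{\mcH,H}(-\phi)=q_{\mcH,H}(\phi)$. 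Your approach is self-contained and makes the identity $2q=\phi\cup_{P_0}\phi$ visible at the level of cochains; the paper's approach is shorter, avoids the bookkeeping you anticipated as the main obstacle, and isolates exactly the structural feature (the existence of such a $\gamma$) that makes the conclusion hold, which the paper then abstracts in the remark following the proposition about ``symmetric'' theta groups.
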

\begin{proof}
From \eqref{eq:Zark}, we have
\[q_{\mcH,H}(\phi + \psi) - q_{\mcH,H}(\phi) - q_{\mcH,H}(\psi) = \phi \cup_{P_0} \psi\]
for all $\phi, \psi$ in $H^1(H, M[\lambda])$. So we just need to show that
$q(a\phi) = a^2q(\phi)$
for all integers $a$. Per a standard argument (see \cite[Remark 2.1]{PR12}), this reduces to showing that
\begin{equation}
\label{eq:min_no_change}
q(-\phi) = q(\phi)
\end{equation}
for all $\phi$ in $H^1(H, M[\lambda])$. But taking $\gamma\colon \mcH \to \mcH$ to be the equivariant automorphism
\[(\alpha, m)\,\text{ mod } N\, \xmapsto{\quad\,\,}\, (\alpha, -m)\,\text{ mod } N,\]
we see that $\gamma$ fits in the diagram
\begin{equation}   \label{eqn:symm_theta}
\begin{tikzcd}
0 \arrow{r} & (\Fsep)^{\times}   \arrow{r}\arrow[d, equal] & \mcalH \arrow[d, "\gamma"] \arrow{r} &M[\lambda] \arrow[d, "  -1"]  \arrow{r} &0 \\
0 \arrow{r} & (\Fsep)^{\times}  \arrow{r} & \mcalH \arrow{r} & M[\lambda] \arrow{r} &0.
\end{tikzcd}
\end{equation}
Functoriality of connecting maps then gives \eqref{eq:min_no_change}.
\end{proof}

\begin{rmk}
In general, given a theta group $\mcH$ for the $G_F$-module $M$, the existence of an equivariant automorphism $\gamma$ fitting in the diagram obtained by replacing $M[\lambda]$ by $M$ in \eqref{eqn:symm_theta} is enough to show that $q_{\mcH}$ is a quadratic form. By analogy with the case of abelian varieties,  theta groups admitting such an automorphism might reasonably be called symmetric.
\end{rmk}

\subsection{Recovering Mumford's theta groups} \label{sec:theta_line_bundle}
Our use of theta groups was inspired by the theory of theta groups appearing in the theory of abelian varieties. In particular, Definition \ref{defn:div_theta} was designed to generalize Mumford's construction of theta groups associated to a symmetric line bundle on an abelian variety \cite{Mumf66}. The final goal of this section is to show that Definition \ref{defn:div_theta} recovers Mumford's construction.

To begin, we recall Mumford's definition of a theta group.
\begin{defn}
\label{defn:vanilla_theta}
Let $F$  be a field of characteristic other than $2$ and let $A$  be an abelian variety over  $F$. Let $L$ be a symmetric ample line bundle over $A$,  defined over $F$, and of degree coprime to the characteristic of $F$. Denote the dual abelian variety by $A^{\vee}$, and take $\lambda: A \rightarrow A^{\vee}$ to be the polarization associated to $L$. Write $A_{\Fsep}$ for the base-change of $A$  to $\Fsep$. By a slight abuse of notation, we write $L$ also for the base-change of $L$ to a line bundle on $A_{\Fsep}$. For $x \in A(\Fsep)$, write $\tau_x: A_{\Fbar} \rightarrow A_{\Fsep}$ for the translation by $x$ map. 

The theta group associated to the above data is defined to be the collection of pairs $(x,\phi)$ where $x$ is in $A[\lambda]$, and $\phi$ is an isomorphism (over $\Fbar$) from $L$ to $\tau^*_xL$:
\[\mcalH(L) = \big\{ (x, \phi)\,:\,\, x \in A[\lambda],\,\, \phi: L \xrightarrow{\,\,\sim\,\,} \tau^*_x L\big\}.\]
 The group structure is given by 
 \[(x,\phi) \cdot (x',\phi')=(x+x', \,\tau_{x'}^*(\phi)\circ \phi'),\]
 and the $G_F$-action is the diagonal one.  
 
Projection onto the first coordinate yields a short exact sequence of $G_F$-groups 
 \begin{equation}
\label{eq:thet_realdef}
1 \rightarrow \Fstar \rightarrow \mcalH(L) \rightarrow A[\lambda] \rightarrow 1,
\end{equation}
realizing the theta group as a central extension of $A[\lambda]$ by $\Fstar$. The associated commutator pairing 
\[e^{L}:A[\lambda]\times A[\lambda] \to \Fstar\]
is the Weil pairing associated to $L$.  
\end{defn}

\begin{prop}
\label{prop:degeom_theta}
With the notation above, write $M = A[\lambda]$, $M_0 = A[2\lambda]$,  and take $P_1$ to be the reciprocal  of the Weil pairing associated to the line bundle $L^{ 2}$. Let $e$ be the quadratic form
\[e^L_*: A[2] \rightarrow \pm 1\]
defined in \cite[p. 304]{Mumf66}. Let $\mathcal{H}(A[\lambda])$ be the theta group associated to this data via Definition \ref{defn:div_theta}.

Then there is a canonical isomorphism of  $G_F$-groups
\[\eta:  \mcalH(A[\lambda]) \rightarrow \mcalH(L)\]
   fitting into a commutative diagram
\[
\begin{tikzcd}
0 \arrow{r} & (\Fsep)^{\times}\arrow{r}\arrow[d, equals] &\mcalH(A[\lambda]) \arrow[d, "\eta"] \arrow{r} &A[\lambda] \arrow[d, equals]  \arrow{r} &0 \\
0 \arrow{r} & (\Fsep)^{\times} \arrow{r} &  \mcalH(L) \arrow{r} & A[\lambda] \arrow{r} &0
\end{tikzcd}
\]
with top row \eqref{eq:thet_def} and bottom row \eqref{eq:thet_realdef}.
\end{prop}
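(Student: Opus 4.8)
The plan is to build the isomorphism $\eta$ by hand as a morphism of central extensions and then appeal to the five lemma. Both rows of the asserted diagram are $G_F$-equivariant central extensions of $A[\lambda]$ by $(\Fsep)^{\times}$, so any $G_F$-equivariant homomorphism $\eta$ that restricts to the identity on $(\Fsep)^{\times}$ and induces the identity on $A[\lambda]$ is automatically an isomorphism. Thus the entire problem reduces to exhibiting one canonical such $\eta$ and checking multiplicativity and Galois equivariance. Throughout I use the identifications forced by the hypotheses: since the pairing $P_1$ lives on $M[2\lambda]$ and is the (reciprocal) Weil pairing of $L^2$, we have $M[2\lambda] = K(L^2) = A[2\lambda]$, and since $\mcalH(A[\lambda])$ is to be a theta group for $A[\lambda]$ we have $M[\lambda] = K(L) = A[\lambda]$; the right-hand vertical map of \eqref{eq:thet_def} is then multiplication by $2$, which is surjective from $K(L^2)$ onto $K(L)$ over $\Fsep$ because $[2]$ is separable.

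The heart of the construction is a family of canonical isomorphisms $\phi_m \colon L \isoarrow \tau_{2m}^* L$ indexed by $m \in K(L^2)$, supplied by Mumford's theory of symmetric line bundles \cite{Mumf66} (note $2m \in K(L)$, so $\tau_{2m}^* L \cong L$ and such isomorphisms exist). I would define $\eta$ first on the level of the group $U$ of Definition \ref{defn:div_theta} by $(\alpha, m) \mapsto (2m,\, \alpha \cdot \phi_m)$, which lands in $\mcalH(L)$. Three geometric facts, each drawn from \cite{Mumf66} and recorded in the cocycle language of \cite[\S 2.1, Corollary 2.8]{PR11}, must then be assembled: that $\phi_0 = \mathrm{id}_L$, so that $\eta$ restricts to the identity on $(\Fsep)^{\times}$; that the canonical isomorphisms compose with defect equal to the reciprocal Weil pairing of $L^2$, i.e. $\tau_{2m'}^*(\phi_m) \circ \phi_{m'} = P_1(m,m') \cdot \phi_{m+m'}$, which is exactly what is needed for $\eta$ to carry the twisted multiplication $(\alpha,m)\cdot(\alpha',m') = (\alpha\alpha' P_1(m,m'),\, m+m')$ of $U$ to Mumford's multiplication $(x,\phi)(x',\phi') = (x+x',\, \tau_{x'}^*(\phi)\circ\phi')$; and that for $m \in A[2]$, where $2m = 0$ and $\phi_m$ is an automorphism of $L$, one has $\phi_m = e^L_*(m)\cdot \mathrm{id}_L$.

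Granting these, the descent to $U/N = \mcalH(A[\lambda])$ is immediate. For $m \in A[2]$ the generator $(e(m), m) = (e^L_*(m), m)$ of $N$ maps under $\eta$ to $(0,\, e^L_*(m)^2 \cdot \mathrm{id}_L) = (0, \mathrm{id}_L)$, since $e^L_*(m) = \pm 1$; hence $\eta(N) = \{1\}$ and $\eta$ factors through $\mcalH(A[\lambda])$. The induced map on quotients sends the class of $(\alpha,m)$ to $2m$, which is precisely the right-hand vertical map of \eqref{eq:thet_def}, so the square commutes, and the five lemma makes $\eta$ an isomorphism. Galois equivariance follows from the canonicity of the $\phi_m$: because $L$ and its symmetric structure are defined over $F$, Mumford's construction commutes with the $G_F$-action, giving $\phi_{\sigma m} = \sigma(\phi_m)$, and as $P_1$ is $G_F$-equivariant and both actions are diagonal, $\eta$ is $G_F$-equivariant.

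The main obstacle is the second of the three facts: pinning down the exact cocycle by which the canonical $\phi_m$ fail to be multiplicative and verifying it is the reciprocal of $e^{L^2}$ rather than some bilinear form differing from it by a coboundary, since a coboundary-worth of scalar ambiguity in the $\phi_m$ would be invisible to the commutator but fatal to multiplicativity of $\eta$. This is exactly where the symmetric normalization of \cite{Mumf66} and the explicit computations of \cite[\S 2.1]{PR11} are indispensable. As a built-in consistency check, antisymmetrizing this cocycle must recover the commutator pairing of $\mcalH(L)$, namely $e^L(2m,2m') = e^{L^2}(m,m')^{-2}$, which is the standard compatibility of Weil pairings under the isogeny $[2]$ together with bilinearity of the Weil pairing in the line bundle; this matches the pairing $P_0$ on the bottom row of \eqref{eq:thet_def} and is the concrete manifestation of the fact that $\eta$ preserves commutators. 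By contrast, the sign computation $\phi_m = e^L_*(m)\,\mathrm{id}_L$ over $A[2]$ is essentially the defining property of $e^L_*$ on \cite[p.~304]{Mumf66}, so it requires only checking that our $\phi_m$ agree there with Mumford's normalized symmetry isomorphism.
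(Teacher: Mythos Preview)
Your proposal is correct and follows essentially the same route as the paper: build a map $U \to \mcalH(L)$ by $(\alpha,m) \mapsto (2m,\, \alpha\cdot\phi_m)$ for suitable canonical isomorphisms $\phi_m\colon L \isoarrow \tau_{2m}^*L$, verify the multiplication defect is $P_1$, and check the kernel is $N$. The paper makes your canonical $\phi_m$ explicit by setting $\phi_m = \eta_2(\psi(m))$, where $\eta_2\colon \mcalH(L^2)\to\mcalH(L)$ is Mumford's map (squaring on scalars, $[2]$ on $A[2\lambda]$) and $\psi(m)\in\mcalH(L^2)$ is the lift of $m$ normalized by $\delta_{-1}(\psi(m))=\psi(m)^{-1}$; with this choice the cocycle relation becomes a short direct computation in $\mcalH(L^2)$ rather than an appeal to \cite{PR11}, and the identity $\phi_m = e^L_*(m)$ on $A[2]$ is \cite[Proposition~6]{Mumf66} rather than the bare definition of $e^L_*$.
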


\begin{proof}
Since $L$ is a symmetric line bundle, there are canonical homomorphisms
\[\delta_{-1}: \mcalH(L^2) \rightarrow \mcalH(L^2)\quad\text{and}\quad \eta_2: \mcalH(L^2)\rightarrow \mcalH(L)\]
defined by Mumford \cite[p. 308]{Mumf66} and fitting into the commutative diagrams
\[
\begin{tikzcd}
0 \arrow{r} & \Fstar \arrow{r}\arrow[d, equals] & \mcalH(L^2) \arrow[d, "\delta_{-1}"]\arrow{r} &A[2\lambda] \arrow[d, " -1"]  \arrow{r} &0 \\
0 \arrow{r} & \Fstar\arrow{r} & \mcalH(L^2) \arrow{r} &A[2\lambda]  \arrow{r} &0
\end{tikzcd}
\]
and
\[
\begin{tikzcd}
0 \arrow{r} & \Fstar \arrow{r}\arrow[d, "x \mapsto x^2"] & \mcalH(L^2) \arrow[d, "\eta_2"]\arrow{r} &A[2\lambda] \arrow[d, "2"]  \arrow{r} &0 \\
0 \arrow{r} & \Fstar \arrow{r} & \mcalH(L) \arrow{r} &A[\lambda]  \arrow{r} &0,
\end{tikzcd}
\]
respectively. Here the appearance of $A[2\lambda]$ is explained by \cite[Proposition 4, p. 310]{Mumf66}.

Given $x$ in $A[2\lambda]$ and a lift $x'$ of $x$ to $\mathcal{H}(L^2)$,  we see that $\delta_{-1}(x')x'$ lies in $\Fstar$. Shifting $x'$ by an element of $\Fstar$ if necessary, it follows that we can always  find some $\psi(x)$ in $\mcalH(L^2)$ projecting to $x$ that satisfies $\psi(x)^{-1} = \delta_{-1}(\psi(x))$. This element is determined up to sign.  In particular, $\eta' = \eta_2 \circ \psi$ is a well defined map from $A[2\lambda]$ to $\mathcal{H}(L)$ and sits in a commutative diagram
\[
\begin{tikzcd}
& & &A[2\lambda] \arrow[d, " 2"] \arrow[ld, "\eta'"]   & \\
0 \arrow{r} & \Fstar \arrow{r} & \mcalH(L) \arrow{r} &A[\lambda]  \arrow{r} &0.
\end{tikzcd}
\]
For $x, y \in A[2\lambda]$ we compute
\[\delta_{-1}\big(\psi(x)\psi(y)\big) = \psi(x)^{-1}\psi(y)^{-1} = [\psi(x)^{-1},\psi(y)^{-1}]  \cdot \big(\psi(x)\psi(y)\big)^{-1}\]
\[= e^{L^2}(x, y) \big(\psi(x)\psi(y)\big)^{-1}.\]
Correcting for this, we see that the two choices for $\psi(x+y)$ are
\begin{equation}  \label{eq:computation_of_psi}
\psi(x + y) = \pm \frac{1}{\sqrt{e^{L^2}(x, y)}}  \psi(x)\psi(y),
\end{equation}
so we have
\begin{equation*}
\eta'(x + y) = P_1(x, y) \eta'(x) \eta'(y).
\end{equation*}
This agrees with  multiplication in $U$ as in \eqref{eq:thet_def}, and we have a commutative diagram

\[
\begin{tikzcd}
0 \arrow{r} & \Fstar \arrow{r}\arrow[d, equals] & U \arrow[d, "\eta"] \arrow{r} &A[2\lambda] \arrow[d, " 2"]  \arrow{r} &0 \\
0 \arrow{r} & \Fstar \arrow{r} & \mcalH(L) \arrow{r} &A[\lambda]  \arrow{r} &0,
\end{tikzcd}
\]
where the central map sends $(\zeta, x)$ in $\Fstar \times A[2\lambda]$ to $\zeta   \eta'(x)$.

It follows formally from the diagram that $\eta$ is surjective, so to prove the proposition we just need to verify that the kernel of this map is the set of $\big(e^L_*(x), x\big)$, with $x$ ranging through the $2$-torsion points. We start by noting that, given $(\zeta,x)$ is in the kernel of $\eta$, $x$ must lie in $A[2]$. Equation \eqref{eq:computation_of_psi}  then shows that we may take $\psi(x)$ to have order $2$ in $\mathcal{H}(L^2)$, and it thus follows from \cite[Proposition 6]{Mumf66}  that, inside $\Fstar$, we have
\[\eta'(x)=e_*^L(x).\]
Thus,  $(\zeta,x)$ is in the kernel of $\eta$ if and only if $\zeta= e_*^L(x)$. The result follows.
\end{proof}

\section{Examples} 
\label{section:apps}
\label{sec:apps}

We end by surveying some instances of pairings in the literature which can be interpreted and studied using our generalized theory of the Cassels--Tate pairing.

\subsection{The Cassels--Tate pairing for isogeny Selmer groups} 
\label{ssec:isogeny}
We start with a well-studied generalization of Example \ref{ex:classical} to isogenies. Choose an isogeny
$\varphi\colon A \to B$
of abelian varieties over $F$. We assume the degree of this isogeny is indivisible by the characteristic of $F$. For each place $v$ of $F$, we define
\begin{equation*} \label{isogeny_local_conds}
\msW_{\varphi, v} = \ker\left(H^1(G_v, A[\varphi]) \to H^1(G_v, A)\right).
\end{equation*}
Taking $\msW_{\varphi}$ to be the product of these groups, the $\varphi$-Selmer group is then defined by
\[\Sel^{\varphi} A = \Sel\left(A[\varphi],\, \msW_{\varphi}\right).\]
The next proposition shows these objects behave well under duality.
\begin{prop}
\label{prop:dual_isog_loc}
Taking $\varphi^{\vee} \colon B^{\vee} \to A^{\vee}$ to be the dual isogeny to $\varphi$, the   isomorphism
\[B^{\vee}\left[\varphi^{\vee}\right]\, \cong \,A[\varphi]^\vee\]
provided by the Weil pairing extends to an isomorphism
\[\left(B^{\vee}\left[\varphi^{\vee}\right], \, \msW_{\varphi^{\vee}}\right) \,\cong\, (A[\varphi], \msW_{\varphi})^{\vee}\]
in $\SMod_F$. 
\end{prop}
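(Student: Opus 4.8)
The plan is to separate the two assertions packaged into the statement: first, that the Weil pairing identifies $B^{\vee}[\varphi^{\vee}]$ with $A[\varphi]^{\vee}$ as $G_F$-modules, and second, that under this identification the local conditions $\msW_{\varphi^{\vee}}$ coincide with the orthogonal complement $\msW_{\varphi}^{\perp}$ that defines the duality functor on $\SMod_F$. The first assertion is standard: the isogeny Weil pairing is a perfect $G_F$-equivariant pairing $e_{\varphi}\colon A[\varphi]\times B^{\vee}[\varphi^{\vee}]\to \mu_{\deg\varphi}\subseteq (\Fsep)^{\times}$, and this is exactly the pairing realizing $B^{\vee}[\varphi^{\vee}]\cong A[\varphi]^{\vee}=\Hom(A[\varphi],(\Fsep)^{\times})$. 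In particular the local Tate duality pairing of Definition \ref{defn:loc_pair} between $H^1(G_v,A[\varphi])$ and $H^1(G_v,B^{\vee}[\varphi^{\vee}])$ is cup product taken against $e_{\varphi}$ followed by $\inv_v$. It therefore remains to show, at each place $v$, that $\msW_{\varphi^{\vee},v}$ is the exact annihilator of $\msW_{\varphi,v}$.

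For the placewise orthogonality I would use the two short exact sequences $0\to A[\varphi]\to A\xrightarrow{\varphi}B\to 0$ and $0\to B^{\vee}[\varphi^{\vee}]\to B^{\vee}\xrightarrow{\varphi^{\vee}}A^{\vee}\to 0$ of $G_v$-modules, which identify $\msW_{\varphi,v}$ and $\msW_{\varphi^{\vee},v}$ with the images of the local Kummer maps $B(F_v)/\varphi A(F_v)\hookrightarrow H^1(G_v,A[\varphi])$ and $A^{\vee}(F_v)/\varphi^{\vee}B^{\vee}(F_v)\hookrightarrow H^1(G_v,B^{\vee}[\varphi^{\vee}])$. The argument then splits into isotropy and a cardinality count. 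Isotropy, namely $\msW_{\varphi^{\vee},v}\subseteq \msW_{\varphi,v}^{\perp}$, follows from the compatibility of $e_{\varphi}$ with Tate's local duality pairing for $A$ and $A^{\vee}$ together with the standard compatibility of cup products with connecting maps: the cup product of two Kummer images is the image under a connecting map of a cup product of two degree-zero classes and hence vanishes. For the reverse inclusion I would compare cardinalities: Tate's local duality theorem for abelian varieties puts $B(F_v)/\varphi A(F_v)$ and $A^{\vee}(F_v)/\varphi^{\vee}B^{\vee}(F_v)$ into perfect duality, so $\#\msW_{\varphi,v}\cdot\#\msW_{\varphi^{\vee},v}=\#H^1(G_v,A[\varphi])$, and perfectness of the local Tate pairing makes the right-hand side equal to $\#\msW_{\varphi,v}\cdot\#\msW_{\varphi,v}^{\perp}$. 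The isotropic subgroup $\msW_{\varphi^{\vee},v}$ thus exhausts $\msW_{\varphi,v}^{\perp}$. All of this is the content of \cite[\S I.3]{Milne86} and may be cited directly.

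To globalize, note that at every non-archimedean place $v$ of good reduction for $A$ with $v\nmid \deg\varphi$ the Kummer image $\msW_{\varphi,v}$ equals the unramified subgroup $H^1_{\textup{ur}}(G_v,A[\varphi])$, and similarly for $\varphi^{\vee}$; so $\msW_{\varphi}$ and $\msW_{\varphi^{\vee}}$ are genuine compact open local conditions subgroups as in Example \ref{ex:classical}. Since $\msW_{\varphi}=\prod_v\msW_{\varphi,v}$ and, by Definition \ref{defn:loc_pair}, $H^1_{\textup{ur}}(G_v,A[\varphi])$ and $H^1_{\textup{ur}}(G_v,B^{\vee}[\varphi^{\vee}])$ are orthogonal complements at all but finitely many $v$, the orthogonal complement with respect to the pairing \eqref{eq:loc_pairing_2} is computed place by place, giving $\msW_{\varphi}^{\perp}=\prod_v\msW_{\varphi,v}^{\perp}=\prod_v\msW_{\varphi^{\vee},v}=\msW_{\varphi^{\vee}}$. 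This upgrades the Weil-pairing isomorphism of $G_F$-modules to the desired isomorphism in $\SMod_F$. The main obstacle is the placewise orthogonality of the two Kummer images: the conceptual core is the compatibility of the isogeny Weil pairing with Tate local duality for abelian varieties, and once isotropy and the matching of local cardinalities are in hand, the remaining steps are formal bookkeeping.
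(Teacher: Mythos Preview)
Your proposal is correct and is essentially the argument the paper defers to: the paper gives no self-contained proof but cites \cite[Proposition B.1]{Cesn17}, which in turn reduces to the multiplication-by-$n$ case handled in \cite[\S I.3]{Milne86}, and your isotropy-plus-cardinality sketch is precisely the content of Milne's argument carried out directly for a general $\varphi$. One small imprecision: Tate's local duality does not literally pair $B(F_v)/\varphi A(F_v)$ with $A^{\vee}(F_v)/\varphi^{\vee}B^{\vee}(F_v)$, but rather identifies $H^1(G_v,A)[\varphi]$ with the Pontryagin dual of $A^{\vee}(F_v)/\varphi^{\vee}B^{\vee}(F_v)$; combined with the exact sequence $0\to \msW_{\varphi,v}\to H^1(G_v,A[\varphi])\to H^1(G_v,A)[\varphi]\to 0$ this yields your cardinality identity, so the conclusion stands.
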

This appears as \cite[Proposition B.1]{Cesn17}. There, it is reduced to the case that $\varphi$ is multiplication by a positive integer, which is implicit in  work of Milne \cite[Section I.3]{Milne86}. 

In addition to the isogeny $\varphi \colon A \to B$, we now choose another isogeny $\lambda \colon C \to A$ of abelian varieties over $F$ whose degree is indivisible by the characteristic of $F$. We then have a short exact sequence
\[E \,=\,\Big[0 \to \big(C[\lambda], \,\msW_{\lambda}\big) \to \big(C[\varphi \circ \lambda], \,\msW_{\varphi\, \circ\, \lambda}\big) \xrightarrow{\,\, \lambda\,\,} \big(A[\varphi],\, \msW_{\varphi}\big) \to 0\Big]\]
in $\SMod_F$. The Cassels--Tate pairing for $E$ takes the form
\[\CTP_E \colon \Sel^{\varphi} A \,\times\, \Sel^{\lambda^{\vee}} A^{\vee} \to \QQ/\Z,\]
and the pairing's kernels are 
\[\lambda\left(\Sel^{\varphi \,\circ\, \lambda} C\right)\quad\text{and}\quad \varphi^{\vee}\big(\Sel^{\lambda^{\vee} \circ\, \varphi^{\vee}} B^{\vee}\big).\]
This pairing is just the one induced by the specialization of the classical pairing \eqref{eq:OGCT} to $\Sha(A)[\varphi]$ and $\Sha(A^{\vee})[\lambda^{\vee}]$. For $A/K$ an elliptic curve over a number field, the kernels of this pairing were first calculated in \cite[Theorem 3]{Fish03}.

An advantage of using this incarnation of the classical Cassels--Tate pairing is that the exact sequence $E$ may be simpler than any of the sequences
\[0 \to \big(A[n], \,\msW_n\big) \to \big(A[n^2], \,\msW_{n^2}\big)\to \big(A[n], \,\msW_n\big) \to 0\]
we might otherwise use to calculate the Cassels--Tate pairing. This is particularly clear in the case that $E$ splits as an exact sequence of $G_F$-modules, where the Cassels--Tate pairing takes a particularly simple form (see Definition \ref{defn:local_pairings}). This observation plays a major role in work of McCallum \cite{MR952286}, who considered the Cassels--Tate for isogeny Selmer groups as above in the case where $\lambda$ and $\varphi$ are both endomorphisms of $A$.

\subsection{Split exact sequences and local pairings} \label{sec:split_local_pairings}

Suppose we have an exact sequence 
\begin{equation} \label{eq:split_sequence_E}
E\, =\, \left[0 \to (M_1,\, \msW_1) \xrightarrow{\,\iota \,} (M, \,\msW) \xrightarrow{\pi} (M_2, \,\msW_2) \to 0\right]
\end{equation}
in $\textup{SMod}_{F}$. If this sequence is isomorphic to the split exact sequence
\[0 \to (M_1,\, \msW_1) \xrightarrow{\,\quad\,} (M_1\oplus M_2, \,\msW_1\oplus \msW_2) \xrightarrow{\quad} (M_2, \,\msW_2) \to 0\]
 then the resulting Cassels--Tate pairing is trivial. However, it is possible that the underlying sequence of $G_{F}$-modules is split, but the local conditions are such that the resulting pairing is non-trivial. In this situation, the Cassels--Tate pairing factors through a purely local pairing, as we now explain. An instance of this phenomenon for the Cassels--Tate pairing associated to an abelian variety appears in work of McCallum \cite{MR952286}.
 
 \begin{defn}
\label{defn:local_pairings}
 Given an exact sequence $E$ in $\textup{SMod}_{F}$ as in \eqref{eq:split_sequence_E}, suppose we have a $G_{F}$-equivariant homomorphism
 \begin{equation} \label{eq:section_to_pi}
 s \colon M_2 \to M
 \end{equation}
 giving a section to $\pi$ in the category of $G_F$-modules. We can then define a local pairing 
 \[\textup{LP}_{E,s} \colon \msW_2 \otimes \msW_1^\perp \to \mathbb{Q}/\mathbb{Z} \]
 as follows. Given tuples $\phi=(\phi_v)_v$ in $\msW_2$ and $\psi=(\psi_v)_{v}$ in $\msW_1^\perp$, choose $(\phi_{v, M})_v$ in $\msW$ so $\phi_v = \pi(\phi_{v, M})$ for all places $v$ of $F$. We then define
\[\textup{LP}_{E,s}\big(\phi, \,\psi\big) = \sum_v \inv_v\left(\iota^{-1}\left( s \circ \phi_v - \phi_{v, M}\right) \cup_{M_1} \psi_v\right).\]
Given instead global elements $\phi$ in $\Sel\, M_2$ and $\psi$ in $\Sel\, M_1^{\vee}$, we then have
\begin{equation}
\label{eq:local_pairings}
\CTP_E\left(\phi,\, \psi\right) = \textup{LP}_{E, s}\left((\res_{G_v} \phi)_v,\, (\res_{G_v} \psi)_v\right),
\end{equation}
as can be verified directly from Definition \ref{defn:CTP}.
\end{defn}

 By non-degeneracy of the local Tate pairings \eqref{eq:local_tate_pairing}, one sees that the left kernel of $\textup{LP}_{E,s}$ is equal to $\msW_2\cap s^{-1}(\msW)$, so that this pairing measures the failure of $s$ to map $\msW_2$ into $\msW$. In this simple case, the Cassels--Tate pairing only encodes these local lifting questions.

\subsection{Sums of local conditions}
The following special case of $\text{LP}$ plays an essential role in Mazur--Rubin's proof of \cite[Proposition 1.3]{MR2373150}. A certain case of this construction also appears in work of Howard \cite[Lemma 1.5.6]{MR2098397}.

Suppose we have objects $(M,\msW_a)$ and $(M,\msW_b)$ in $\SMod_F$. We have an exact sequence
 \begin{equation} \label{eq:sum_condition_exact_seq}
E\, =\, \left[0 \to (M,\, \msW_a\cap \msW_b) \xrightarrow{\,\Delta \,} (M\oplus M, \,\msW_a\oplus \msW_b) \xrightarrow{~} (M, \,\msW_a+\msW_b) \to 0\right] 
\end{equation}
in $\SMod_F$. Here the first map sends $m$ to $(m,m)$ and the second map sends $(m_a,m_b)$ to $m_a-m_b$ for any $m$, $m_a$ and $m_b$ in $M$. The Cassels--Tate pairing then gives a non-degenerate pairing
\[\CTP_E \colon \Sel (M, \,\msW_a + \msW_b)  \,\times \,\Sel\left(M^\vee,\,\msW_a^\perp+\msW_b^\perp\right)\longrightarrow \QQ/\Z\]
with left and right kernels
 \[\Sel(M,\msW_a)+\Sel(M,\msW_b)\quad  \textup{ and }  \quad \Sel(M^\vee,\msW_a^\perp)+\Sel(M^\vee,\msW_b^\perp),\]
respectively.

As a sequence of $G_F$-modules, \eqref{eq:sum_condition_exact_seq}  is a split extension, with the map $s \colon M \to M \oplus M$ defined by $s(m) = (m, 0)$ giving a splitting. So $\CTP_E$ is equal to the pairing $\text{LP}_{E, s}$ defined above. Concretely, suppose we have $\phi \in \Sel\, M$ and $\psi \in \Sel\, M^{\vee}$, and choose decompositions
\begin{alignat*}{3}
&(\res_{G_v}\phi)_v &&= (\phi_{va} + \phi_{vb})_v &&\quad\text{with}\quad (\phi_{va})_v\in \msW_a,\,\,\, \,\, (\phi_{vb})_v\in \msW_b\quad\text{and}\\
&(\res_{G_v}\psi)_v &&= (\psi_{va} + \psi_{vb})_v &&\quad\text{with}\quad (\psi_{va})_v\in \msW_a^{\perp},\,\,  (\psi_{vb})_v\in \msW_b^{\perp}.
\end{alignat*}
Then, per \eqref{eq:local_pairings}, the Cassels--Tate pairing for \eqref{eq:sum_condition_exact_seq} takes the explicit form
\begin{equation}
\label{eq:loc_conds_sum}
\CTP_E(\phi, \psi) = \sum_v \inv_v\left( \phi_{vb} \cup_M (\psi_{va} + \psi_{vb})\right) = \sum_v \inv_v\left( \phi_{vb} \cup_M \psi_{va}\right),
\end{equation}
with the last equality following since $(\phi_{vb})_v$ and $(\psi_{vb})_v$ are orthogonal.

\subsubsection{Alternating pairings from sums of local conditions}
Continuing the above example, suppose there is a $G_F$-equivariant isomorphism $f \colon M \isoarrow M^{\vee}$ that satisfies $f(\msW_a) = \msW_a^{\perp}$ and $f(\msW_b) = \msW_b^{\perp}$, so we have isomorphisms
\[f \colon (M, \msW_a) \isoarrow (M, \msW_a)^{\vee} \quad\text{and}\quad  (M, \msW_b) \isoarrow (M, \msW_b)^{\vee}\]
in $\SMod_F$. We then have a diagram
\[\begin{tikzcd}[column sep = small]
0 \ar{r} & (M, \,\msW_a \cap \msW_b) \ar{r}\ar{d}{f} & (M \oplus M, \,\msW_a \oplus \msW_b) \ar{r}\ar{d}{(f, -f)} & (M, \msW_a + \msW_b) \ar{d}{-f} \ar{r} & 0\\
0 \ar{r} & (M, \, \msW_a + \msW_b)^{\vee} \ar{r}& \left(M^{\vee} \oplus M^{\vee}, \,\msW_a^{\perp} \oplus \msW_b^{\perp}\right)   \ar{r}& (M, \, \msW_a\cap \msW_b)^{\vee}\ar{r} & 0
\end{tikzcd}\]
giving an isomorphism of the short exact sequences $E$ and $E^{\vee}$ in $\SMod_F$. Combining naturality with the duality identity, we find that the resulting perfect pairing
\[\CTP_E(\text{--},\, f(\text{--}))\colon \frac{\Sel(M,\, \msW_a + \msW_b)}{\Sel(M, \, \msW_a) + \Sel(M, \, \msW_b)}\,\times\, \frac{\Sel(M,\, \msW_a + \msW_b)}{\Sel(M, \, \msW_a) + \Sel(M, \, \msW_b)} \xrightarrow{\quad} \QQ/\Z\]
 is antisymmetric.

Suppose now that  $f$ is associated  to some theta group $\mcH$ sitting in the sequence 
\[0 \to (\Fsep)^{\times}\xrightarrow{\,\,\iota_{\mcH}\,\,} \mcH \xrightarrow{\,\,\pi_{\mcH}\,\,} M \to 0,\]
and suppose that $\msW_a$ and $\msW_b$ are isotropic with respect to this theta group. We can construct a theta group over $M \oplus M$ as the quotient
\[\mcH_0 = \frac{\mcH \times \mcH}{\left\{\big(\iota_{\mcH}(\alpha),\, \iota_{\mcH}(\alpha)\big)\,\colon\,\, \alpha \in (\Fsep)^{\times}\right\}},\]
with the map from $(\Fsep)^{\times} \to \mcH_0$ corresponding to the assignment $\alpha \mapsto (\iota(\alpha), 0)$, and with the map $\mcH_0 \to M \oplus M$ corresponding to $(\pi_{\mcH}, \pi_{\mcH})$.

The local conditions $\msW_a \oplus \msW_b$ are isotropic with respect to this theta group by functoriality. Furthermore, given a set-section $s_{\mcH}\colon M \to \mcH$ of $\pi_{\mcH}$,   the assignment
\[m \xmapsto{\,\,\,\,} \big(s_{\mcH}(m), \,s_{\mcH}(m)\big)\]
defines an equivariant homomorphism from $M$ to $\mcH_0$ that lifts the diagonal map from $M$ to $M \oplus M$. The Poonen--Stoll class associated to $E$ and $\mcH_0$ is thus trivial, and by Theorem \ref{thm:theta_main}  the pairing $\CTP_E(\text{--},\, f(\text{--}))$ is an alternating perfect pairing on the quotient
\begin{equation}
\label{eq:square_sum_quotient}
\Sel(M,\, \msW_a + \msW_b)\big/\big(\Sel(M, \, \msW_a) + \Sel(M, \, \msW_b)\big).
\end{equation}
In this case, the quotient always has square order.

\begin{rmk}
A variant of the above construction appears in Klagsbrun--Mazur--Rubin \cite[Theorem 3.9]{KMR13}. Taking $M$ to be a vector space over $\FFF_2$, they show that the quotient \eqref{eq:square_sum_quotient} has even dimension given the existence of suitable Tate quadratic forms as defined in \cite[Definition 3.2]{KMR13}. In our situation, the relevant Tate quadratic forms are given by the connecting maps $q_{\mcH, G_v}$ associated to the theta group $\mcH$ as in Notation \ref{notat:theta_connecting}.

If $M$ is instead a vector space over $\FFF_p$ with $p$ odd, the pairing $\CTP_E(\text{--},\, f(\text{--}))$ is automatically alternating, and \eqref{eq:square_sum_quotient} automatically has even dimension. This is the key consequence used in the work of Howard \cite{MR2098397} and Mazur--Rubin \cite{MR2373150}.
\end{rmk}

\begin{rmk}
The fact that $\CTP_E(\text{--},\, f(\text{--}))$ is antisymmetric can also be proved as a simple consequence of \eqref{eq:loc_conds_sum} and the graded commutativity of cup product. Similarly, the fact that this pairing is alternating if $f$ is the associated map to $\mcH$ follows from \eqref{eq:loc_conds_sum}, Zarhin's identity \eqref{eq:Zark}, and the identity
\[\sum_v \inv_v(q_{\mcH, G_v}(\phi)) = 0,\]
which holds for all $\phi$ in $H^1(G_F, M)$. 
\end{rmk}

\bibliography{references}{}

\providecommand{\bysame}{\leavevmode\hbox to3em{\hrulefill}\thinspace}
\providecommand{\MR}{\relax\ifhmode\unskip\space\fi MR }
% \MRhref is called by the amsart/book/proc definition of \MR.
\providecommand{\MRhref}[2]{%
  \href{http://www.ams.org/mathscinet-getitem?mr=#1}{#2}
}
\providecommand{\href}[2]{#2}
\begin{thebibliography}{10}

\bibitem{BlKa07}
Spencer Bloch and Kazuya Kato, \emph{L-functions and {T}amagawa numbers of
  motives}, The Grothendieck Festschrift, Springer, 2007, pp.~333--400.

\bibitem{Buhl10}
Theo B\"{u}hler, \emph{Exact categories}, Expo. Math. \textbf{28} (2010),
  no.~1, 1--69.

\bibitem{Cass62}
John~W.S. Cassels, \emph{Arithmetic on curves of genus 1. {IV}. {P}roof of the
  {H}auptvermutung.}, Journal f{\"u}r die reine und angewandte Mathematik
  \textbf{1962} (1962), no.~211, 95--112.

\bibitem{Cesn17}
K{\c e}stutis {\v C}esnavi{\v c}ius, \emph{{$p$}-{S}elmer growth in extensions
  of degree {$p$}}, J. Lond. Math. Soc. (2) \textbf{95} (2017), no.~3,
  833--852.

\bibitem{Fish03}
Tom~A Fisher, \emph{The {C}assels--{T}ate pairing and the {P}latonic solids},
  Journal of Number Theory \textbf{98} (2003), no.~1, 105--155.

\bibitem{Flach90}
Matthias Flach, \emph{A generalisation of the {C}assels-{T}ate pairing},
  Journal f{\"u}r die reine und angewandte Mathematik \textbf{412} (1990),
  113--127.

\bibitem{MR2329311}
Norbert Hoffmann and Markus Spitzweck, \emph{Homological algebra with locally
  compact abelian groups}, Adv. Math. \textbf{212} (2007), no.~2, 504--524.
  \MR{2329311}

\bibitem{MR2098397}
Benjamin Howard, \emph{The {H}eegner point {K}olyvagin system}, Compos. Math.
  \textbf{140} (2004), no.~6, 1439--1472.

\bibitem{KMR13}
Zev Klagsbrun, Barry Mazur, and Karl Rubin, \emph{Disparity in {S}elmer ranks
  of quadratic twists of elliptic curves}, Annals of Mathematics (2013),
  287--320.

\bibitem{MR597871}
Kenneth Kramer, \emph{Arithmetic of elliptic curves upon quadratic extension},
  Trans. Amer. Math. Soc. \textbf{264} (1981), no.~1, 121--135.

\bibitem{MR2373150}
Barry Mazur and Karl Rubin, \emph{Finding large {S}elmer rank via an arithmetic
  theory of local constants}, Ann. of Math. (2) \textbf{166} (2007), no.~2,
  579--612.

\bibitem{MR952286}
William~G. McCallum, \emph{On the {S}hafarevich-{T}ate group of the {J}acobian
  of a quotient of the {F}ermat curve}, Invent. Math. \textbf{93} (1988),
  no.~3, 637--666.

\bibitem{Milne86}
James~S. Milne, \emph{Arithmetic duality theorems}, Perspectives in
  Mathematics, vol.~1, Academic Press, Inc., Boston, MA, 1986.

\bibitem{MR3951582}
Adam Morgan, \emph{Quadratic twists of abelian varieties and disparity in
  {S}elmer ranks}, Algebra Number Theory \textbf{13} (2019), no.~4, 839--899.

\bibitem{MoSm21b}
Adam Morgan and Alexander Smith, \emph{The {C}assels--{T}ate pairing for finite
  commutative group schemes}, In preparation.

\bibitem{MoSm21c}
\bysame, \emph{Field change for the {C}assels-{T}ate pairing and applications
  to class groups}, arXiv preprint arXiv:2207.05674 (2022).

\bibitem{Morris77}
Sidney~A Morris, \emph{Pontryagin duality and the structure of locally compact
  abelian groups}, vol.~29, Cambridge University Press, 1977.

\bibitem{Mumf66}
David Mumford, \emph{On the equations defining abelian varieties. {I}},
  Inventiones mathematicae \textbf{1} (1966), no.~4, 287--354.

\bibitem{Neko06}
Jan Nekov\'{a}\v{r}, \emph{Selmer complexes}, Ast\'{e}risque (2006), no.~310,
  viii+559.

\bibitem{Neuk08}
J\"{u}rgen Neukirch, Alexander Schmidt, and Kay Wingberg, \emph{Cohomology of
  number fields}, second ed., Grundlehren der Mathematischen Wissenschaften,
  vol. 323, Springer-Verlag, Berlin, 2008.

\bibitem{PR11}
Bjorn Poonen and Eric Rains, \emph{Self cup products and the theta
  characteristic torsor}, Mathematical Research Letters \textbf{18} (2011),
  no.~6, 1305--1318.

\bibitem{PR12}
\bysame, \emph{Random maximal isotropic subspaces and {S}elmer groups}, Journal
  of the American Mathematical Society \textbf{25} (2012), no.~1, 245--269.

\bibitem{Poon99}
Bjorn Poonen and Michael Stoll, \emph{The {C}assels-{T}ate pairing on polarized
  abelian varieties}, Annals of Mathematics \textbf{150} (1999), no.~3,
  1109--1149.

\bibitem{Rich77}
Fred Richman and Elbert Walker, \emph{Ext in pre-abelian categories}, Pacific
  Journal of Mathematics \textbf{71} (1977), no.~2, 521--535.

\bibitem{Schn99}
Jean-Pierre Schneiders, \emph{Quasi-abelian categories and sheaves}, M\'{e}m.
  Soc. Math. Fr. (N.S.) (1999), no.~76.

\bibitem{MR1466966}
Jean-Pierre Serre, \emph{Galois cohomology}, Springer-Verlag, Berlin, 1997,
  Translated from the French by Patrick Ion and revised by the author.

\bibitem{Smi22}
Alexander Smith, \emph{The distribution of $\ell^{\infty}$-{S}elmer groups in
  degree $\ell$ twist families {I}}, arXiv preprint arXiv:2207.05674 (2022).

\bibitem{Tate63}
John Tate, \emph{Duality theorems in {G}alois cohomology over number fields},
  Proc. {I}nternat. {C}ongr. {M}athematicians ({S}tockholm, 1962), Inst.
  Mittag-Leffler, Djursholm, 1963, pp.~288--295.

\bibitem{Tate76}
\bysame, \emph{Relations between {$K_{2}$} and {G}alois cohomology}, Invent.
  Math. \textbf{36} (1976), 257--274.

\bibitem{Zark74}
Yu~G. Zarkhin, \emph{Noncommutative cohomologies and {M}umford groups},
  Mathematical notes of the Academy of Sciences of the USSR \textbf{15} (1974),
  no.~3, 241--244.

\end{thebibliography}
\bibliographystyle{amsplain}

\end{document}